\numberwithin{equation}{section}
\newcommand\numberthis{\addtocounter{equation}{1}\tag{\theequation}}
\newcommand\Pa{\mathcal{P} }
\newcommand\La{\mathcal{L} }
\newcommand\R{\mathbb{R} }
\newcommand\N{\mathbb{N} }
\newcommand\Py{\mathbb{P} }
\newtheorem{theorem}{Theorem}
\newtheorem{lemme}{Lemma}
\newcommand{\vide}[1]{}
\newtheorem*{definition}{Definition}
\newtheorem{coroll}{Corollary}
\newtheorem{remark}{Remark}
\date{November 14, 2021}
\title[Averaging of diffusion processes on a simplex]{Averaging of semigroups associated to diffusion processes on a simplex}
\author{Dimitri Faure}
\address{DMA, École normale supérieure, Université PSL, CNRS, 75005 Paris, France}
\email{\tt{ dfaure@clipper.ens.psl.eu}}
\begin{document}
\maketitle

\begin{abstract}
We study the averaging of a diffusion process living in a simplex $K$ of $\mathbb R^n$, $n\ge 1$. We assume that its infinitesimal generator can be decomposed as a sum of two generators corresponding to two distinct timescales and that the one corresponding to the fastest timescale is pure noise with a diffusion coefficient vanishing exactly on the vertices of $K$. We show that this diffusion process averages to a pure jump Markov process living on the vertices of $K$ for the Meyer--Zheng topology. The role of the geometric assumptions done on $K$ is also discussed.\end{abstract}

\tableofcontents
\newpage
\section{Introduction}

Let $E$ be a set and consider a family of  $E$-valued Markovian processes $\left\{ X^{\gamma}\; , \; \gamma>0\right\}$ depending on some parameter $\gamma>0$ whose generators can be written in the form 
\begin{equation}
\label{eq:gengen}
\La_{\gamma}=\La^{(0)}+\gamma \La^{(1)}.
\end{equation}
A general question in the probabilistic and analysis literature is to understand the asymptotic behavior of $X^\gamma$ when $\gamma$ goes to infinity {\footnote{Alternatively, if $\varepsilon=\gamma^{-1}$, the aim is to understand in the long time scale $t \varepsilon^{-1}$ the behavior of the process generated by ${\mathcal L}^{(1)} + \varepsilon {\mathcal L}^{(0)}$, i.e. of the process which is a small perturbation of the process generated by ${\mathcal L}^{(1)}$.}} (see books and reviews \cite{SP08,Papa,BLP,Gar,Kif,Skoro,Olla,Ris,YZ,YZ2,FPS,EK,BG,KL,FW,Kam,Sch}). More precisely, given $x\in E$ and $t>0$,  we want to investigate the large $\gamma$ limit of the semigroup acting on a generic test function $f:E \to \mathbb R$ 
$$e^{t\La_{\gamma}}f(x)=\mathbb{E}_x \left(f(X_t^{\gamma})\right).$$

A general `meta-theorem' is that: 
\begin{equation}
\label{ResultatGeneral}
e^{t\La_{\gamma}}\underset{\gamma\to+\infty}{\longrightarrow}\Pa e^{t\La_{\infty}} \Pa 
\end{equation}
where $\Pa$ is the spectral projector onto the kernel of the dominant generator  $\La^{(1)}$, parallel to the image of $\La^{(1)}$, and $\La_{\infty}=\Pa \La^{(0)}\Pa$. When this quantity is well defined, one has: $$\Pa=\underset{t\to+\infty}{\lim} \frac{1}{t} \int_{0}^t e^{t\La^{(1)}}.$$ 

Of course such a theorem would require several assumptions to hold and may be invalid in several situations, in particular when the limit itself does not make sense. Observe also that a priori the operator ${\mathcal L}_\infty$ is not necessarily related to the generator of a Markovian process.\\

Let us give a formal proof of \eqref{ResultatGeneral}. Let denote $f_{t}^{\gamma}=e^{t\La_{\gamma}}f$. Dynkin's formula gives 
\begin{equation}
\label{a}
    \frac{\partial}{\partial t} f_{t}^{\gamma}= \left(\La^{(0)}+\gamma \La^{(1)}\right) f_{t}^{\gamma}.
\end{equation}
Looking for a solution of this problem of the form
\begin{equation}\label{b}
    f_{t}^{\gamma}=f_t^{(0)}+\frac{1}{\gamma} f_t^{(1)}+O\left(\frac{1}{\gamma^2}\right),
\end{equation}
injecting  $\eqref{b}$ in $\eqref{a}$ and equating coefficients in powers of $\frac{1}{\gamma}$ we get that for any $t>0$:
\begin{align}
& \La^{(1)} f_t^{(0)}=0, \numberthis \label{c}\\
    & \La^{(1)}f_t^{(1)}=\frac{\partial}{\partial t} f_t^{(0)}-\La^{(0)}f_t^{(0)} \numberthis \label{d}
\end{align}
Equation $\eqref{c}$ gives us that $f_t^{(0)}$ is in the kernel of $\La^{(1)}$, so eventually \begin{equation}\label{e}
    f_t^{(0)}=\Pa f_t^{(0)}
\end{equation} by the definition of $\Pa$. We have also $\Pa \La^{(1)}=0$ by the definition of $\Pa$, so applying $\Pa$ on the left on $\eqref{d}$, we get, using $\eqref{e}$ and $\Pa^{2}=\Pa$, that: \begin{align*}
    0=\Pa \La^{(1)} f_t^{(1)} 
    =\Pa\frac{\partial}{\partial t} f_t^{(0)}-\Pa\La^{(0)}f_t^{(0)} 
    =\frac{\partial}{\partial t} \Pa f_t^{(0)}-(\Pa\La^{(0)}\Pa) \Pa f_t^{(0)}.
\end{align*}
We thus have, using $f_0^{(0)}=f$, that: \begin{equation}\label{f}
    f_t^{(0)}=\Pa f_t^{(0)}=e^{t\La_{\infty}}\Pa f=\Pa e^{t\La_{\infty}} \Pa f.
\end{equation}
However, the proof is only formal, and it is only in some specific situations that a rigorous proof of the `meta-theorem' can be given. In the literature, this kind of theorem is sometimes called an `averaging principle' and has been developed not only in the context of diffusion processes \cite{SP08,Papa,Gar,Skoro,Olla,Ris,FW,Kam,Sch,k1,k2,k3,k4,k5,k6,k7,k8,k9,k10,k11,k12,k13,k14,k15,k16,k17,k18,k19,k20,k21,k22,k23,k24,k25} but also, and historically first, for dynamical systems \cite{Kam,a1,a2,a3,a4}.\\

For instance, the averaging problem has been widely studied for solutions $X^\gamma = (y^\gamma, z^\gamma)\in \mathbb{T}^{d-\ell}\times  \mathbb{T}^{\ell}$ of a stochastic differential equation (SDE) taking the form:
 \begin{equation}
 \label{g}
\left\{\begin{array}{ll}
dy_t^{\gamma}&=\gamma b^y\left(z_t^{\gamma},y_t^{\gamma}\right)\mathrm{d}t+\sqrt{\gamma}\sigma^y\left(z_t^{\gamma},y_t^{\gamma}\right)\mathrm{d}W_t, \\
     dz_t^{\gamma}&=b^z\left(z_t^{\gamma},y_t^{\gamma}\right)\mathrm{d}t+\sigma^z\left(z_t^{\gamma},y_t^{\gamma}\right)\mathrm{d}B_t   
     \end{array} \right. 
 \end{equation}
where all the functions $b^y, b^z, \sigma^y, \sigma^z$ involved are smooth and $(B_t)_{t\ge 0}$ and $(W_t)_{t\ge 0}$ are independent Brownian motions. Here, the generator of the coupled process $X^\gamma$ is in the form \eqref{eq:gengen} with the subdominant generator given by: $$\La^{(0)}=\langle b^z,\nabla_z \rangle+\frac{1}{2}\langle \sigma^z (\sigma^z)^T \nabla_z,\nabla_z \rangle$$ and the dominant generator driven by: $$\La^{(1)}=\langle b^y, \nabla_y \rangle+\frac{1}{2}\langle \sigma^y (\sigma^y)^T \nabla_y,\nabla_y \rangle.$$
Observe that in \eqref{g} there is a clear separation of time scales so that $z^\gamma$ (resp. $y^\gamma$) can be identified to a slow (resp. fast) component of the coupled process $X^\gamma$. Notice also that the first equation in $\eqref{g}$, when we fix $z_t^{\gamma}=\xi$, define an SDE on $\mathbb{T}^{d-\ell}$ of invariant distribution $\rho^{\infty}({\rm d} y,\xi)$.\\

When $\gamma$ is big enough, for any $t>0$, and for a small amount of time $\mathrm{d}t$, in a first approximation,  $z_{s}^{\gamma}$ is almost equal to $z_t^{\gamma}=\xi$ for $s\in [t, t+{\rm d}t]$. Hence, during this time interval, $(y_s^\gamma)_{s\in [t, t+{\rm d}t]}$ is roughly equal in law to the scaled process $({\tilde y}_{\gamma s})_{s\in [t,t+{\rm d} t]}$ where ${\tilde y}$ is the process on ${\mathbb T}^{d-\ell}$ with generator ${\mathcal L}^{(1)}$ in which the variable $z$ has been frozen to the value $\xi$ during this amount of time $\mathrm{d}t$. Therefore, for $\gamma$ large, by the ergodic theorem, the density of $y_s^{\gamma}$ is almost constant equal to $\rho^{\infty}(y,\xi)\mathrm{d}y$ during this time interval. It follows that in a second approximation $(z_t^{\gamma})_{t\ge 0}$ is equal in law to $(z_t)_{t\ge 0}$, the solution of $$\mathrm{d}z_t=F(z_t)\mathrm{d}t+A(z_t)\mathrm{d}B_t,$$
with  
$$\left\{\begin{array}{ll}
     F(\xi)&=\displaystyle\int_{\mathbb{T}^{d-\ell}} b^z(y, \xi)\rho^{\infty}({\rm d} y,\xi), \\
     A(\xi)A(\xi)^T&=\displaystyle \int_{\mathbb{T}^{d-\ell}} \sigma^z(y, \xi)(\sigma^z(y, \xi))^T \rho^{\infty}({\rm d} y,\xi).
\end{array} \right.$$
The generator of this autonomous process living in $\mathbb T^\ell$ is: $$\overline{\La}=\langle F, \nabla_z \rangle+\frac{1}{2}\langle A A^T \nabla_z,\nabla_z \rangle,$$ and therefore the law of $z_t^{\gamma}$ is close to the probability measure $\mu_t^z=\delta_z e^{t\overline{\La}}$ for large $\gamma$. This argument can be made rigorous and it is proved historically in \cite{k10,k11,k12,k13}, under some assumptions, that $(z_t^{\gamma})_{0\le t\le T}$ converges weakly to $(z_t)_{0\le t\le T}$ in $\mathcal{C}([0,T],\mathbb{T}^{\ell})$ when $\gamma$ tends to infinity for all $T>0$. \\

We now explain how to connect this result with the `meta-theorem' stated at the beginning of the paper. The result just above implies that 
for any smooth function $f: (y,z) \in \mathbb{T}^d={\mathbb T}^{d-\ell}\times {\mathbb T}^\ell  \to f(y,z) \in  \mathbb R$ we have that
\begin{equation*}
\lim_{\gamma \to \infty} \mathbb{E}_{(y,z)}(f(y_t^{\gamma},z_t^{\gamma})) = \int_{\mathbb T^\ell} \ \left[ \int_{{\mathbb T}^{d-\ell} } f(y',z') \rho^{\infty} ({\rm d} y',z')\right] \ {\rm d} \mu_t^z (z'). 
\end{equation*}
By the definition of $\Pa$ we have that: $$(\Pa f)(y,z)=\int_{\mathbb{T}^{d-\ell}} f(y',z)\rho^{\infty}({\rm d} y',z),$$
that is independent of $y$. Hence, denoting ${\bar f} (z) =({\mathcal P}f) (y,z)$, we get
\begin{equation*}
\lim_{\gamma \to \infty}  e^{t {\mathcal L}_\gamma} f  (y,z)= \lim_{\gamma \to \infty} \mathbb{E}_{(y,z)}(f(y_t^{\gamma},z_t^{\gamma})) = \int_{\mathbb T^\ell}  {\bar f} \ (z')\ {\rm d} \mu_t^z (z'). 
\end{equation*}
Recalling that $\mu_t^z =\delta_z\ e^{t{\overline{\mathcal L}}}$, we obtain by formal integration by parts:
\begin{equation*} \lim_{\gamma \to \infty} e^{t {\mathcal
L}_\gamma} f (y,z) = e^{t {\overline{\La}}} {\bar f} (z) \end{equation*}
On the other hand a trivial computation shows that:
\begin{align*}
\La_{\infty} f&=\Pa \La^{(0)}\Pa f =\Pa \La^{(0)} {\bar f}
=\overline{\La} {\bar f}.
\end{align*}
This proves the 'meta-theorem' \eqref{ResultatGeneral} for this particular system.\\

A fundamental remark is that in this seminal example, the kernel of the dominant process ${\mathcal L}^{(1)}$ is composed of the  functions constant in the second (slow) variable, i.e. depending only of the first (fast) variable $y$. The kernel of ${\mathcal L}^{(1)}$ is hence infinite-dimensional. A natural question motivating this paper is then: what happens for the 'meta-theorem' for diffusion processes if the kernel of ${\mathcal L}^{(1)}$ is finite dimensional generated by a basis $\{f_1, \ldots, f_{p}\}$? Our conjecture is that in general, the limiting semi-group $e^{t{\mathcal L}_\infty}$ appearing in the `meta-theorem', will be associated to a pure jump continuous time Markov process living in the space of the basis $\{f_1, \ldots, f_{p}\}$. As far as we know this question has not been addressed in the literature{\footnote{Of course the formulation of the question is a bit messy since we do not precise the functional spaces considered.}}. However, very recently, in \cite{BBCCNP}, the authors show that a certain class of diffusions, related to quantum continuous measurements \cite{q1,q2,q3,q4}, and having a generator in the form \eqref{eq:gengen} with a finite-dimensional kernel for ${\mathcal L}^{(1)}$, converge in the large $\gamma$ limit to a pure jump continuous time Markov process on a finite space{\footnote{In fact in \cite{BBCCNP} a more general situation is considered with a dominant, an intermediate and a subdominant generators.}}. To prove their theorem the authors develop a {\it{finite-dimensional}} homogenization theorem \cite[Theorem 3.1]{BBCCNP} for {\it{bounded operators}} and, due to the specific form of the diffusion processes considered (linear drift and quadratic mobility), are able to prove the convergence by developing a tricky perturbative argument. This homogenization theorem is very similar, at least in its simplified version, to the `meta-theorem' for matrices. A more natural approach, but technically much more involved, would have been in \cite{BBCCNP} to use directly their homogenization theorem for the infinitesimal generators of the diffusion processes, i.e. for unbounded linear operators (as it is done without mathematical rigor in \cite{BBT}). Unfortunately the proof of \cite[Theorem 3.1]{BBCCNP} seem to be difficult to extend for unbounded operators.\\

The aim of this work is to provide a first step in this direction by adapting the proof of  \cite[Theorem 3.1]{BBCCNP} for some (unbounded) generators of diffusion operators. In order that the kernel of ${\mathcal L}^{(1)}$ is finite-dimensional, specific properties of the corresponding diffusion process have to be imposed. We will consider diffusions living in a simplex $K\subset {\mathbb R}^n$. Moreover, we will only investigate the case where the dominant generator ${\mathcal L}^{(1)}$ does not contain any drift term, i.e. is pure noise, and such that the kernel of ${\mathcal L}^{(1)}$ is finite-dimensional. To satisfy the later condition we will assume that the volatility in ${\mathcal L}^{(1)}$ is non-negative and vanishes exactly on a finite subset $K_0$ of $K$. Extending the assumptions done here would be interesting but up to now we have not been able to do it. In this article we show that under a geometric assumption on $K_0$, the convergence of the diffusion processes on the simplex $K$ to a pure jump continuous-time Markov process on $K_0$ holds.  We also give a counterexample of the theorem if this geometrical hypothesis is not verified, and thus prove the optimality of our conditions.\\

The paper is structured as follows. In Section \ref{sec:DMR} we define and state precisely our main results and we prove them in Section \ref{sec:proofs}. Section \ref{sec:CE} is devoted to the counter-example showing in some sense the optimality of our geometric assumption on $K_0$. An appendix where a uniform ergodic theorem for martingales is proved concludes the paper.

%
%
%
%



\section*{Notations}
Let $n\ge 1$ be an integer. The set of real-valued $n\times n$-matrix is denoted  by $\mathcal{M}_n(\R)$ and the components of a matrix $A \in {\mathcal M}_n (\R)$ are denoted by $A_{i,j}$, $i, j \in \{1, \ldots, n\}$. In particular the coordinates of $x\in {\mathcal M}_{n,1}(\mathbb R)\approx \mathbb R^n$ are denoted by $x_i$, $i\in \{1, \ldots,n\}$. If $A=(A_{i,j})_{1\le i,j\le n} \in \mathcal{M}_n(\R)$, the trace of $A$ is denoted by  $\textbf{tr}(A)=\sum_{k=1}^n \limits A_{k,k}$. For a given matrix $A$ of $\mathcal{M}_n(\R)$, the complex conjugate of $A$ is written as $A^{\dagger}$. We also denote $a\cdot b=\sum_{i=1}^n a_i b_i$ the standard scalar product of $a$ and $b$ in $\R^n$ and  by $\|\cdot\|_2$ the associated norm.  For $x\in \R^n$ and $\varepsilon>0$, $B(x,\varepsilon)$ denotes the ball of center $x$ and radius $\varepsilon$ for the norm $\|\cdot\|_2$.

\bigskip
The supremum norm of a real valued function $f$ defined on a set $K\subset \mathbb R^n$ is denoted by $\|f\|_{\infty}=\underset{x\in K}{\sup} |f(x)|$. The set $\mathbb{D}(\R_+, K)$ is the space of c\`adl\`ag functions from $\R_+$ to $K\subset \mathbb R^n$. If $f\in \mathcal{C}^1(\R^n)$, we write $\nabla_x f=(\partial_i f)_{1\le i\le n}$ the gradient of $f$ and if furthermore $f\in \mathcal{C}^2(\R^n)$, we write $H_f=\left(\partial_{i,j}^2 f\right)_{1\le i,j\le n}$ the Hessian of $f$.  For a vector field $b: x \to \R^n \to b (x)=(b_1 (x), \ldots, b_n (x))  \in \R^n$, we write $\langle b, \nabla_x \rangle=\sum_{i=1}^n \limits b_i (x) \partial_i$. For a matrix $A$ of $\mathcal{M}_n(\R)$, one writes $\langle A \nabla_x,\nabla_x \rangle=\sum_{i,j=1}^n \limits A_{i,j} \partial_{i,j}^2$.

\bigskip
 For a probability distribution $\mu$ on $\R^n$ and a real-valued bounded function $f$ on $\R^n$, we write $\langle \mu, f \rangle=\int f \mathrm{d}\mu$. For a given differential operator $\La$ (in particular for the infinitesimal generator of a Markovian process), we denote by $t\mapsto e^{t\La}$ the associated semigroup. Hence, for a given probability measure $\mu$, an infinitesimal generator $\La$ of a Markov process and a time $t\ge 0$, we write $\mu e^{t\La}$ the law of the process of generator $\La$ at time $t$.\\

\section{Main result}
\label{sec:DMR}

  From now on $K$ is a simplex of $\R^n$.

\subsection{Definitions}

Let $\gamma>0$, we consider the diffusion processes \\$(X_t^{\gamma})_{t\ge 0}:=\left(X_t^{\gamma}(x)\right)_{t\ge 0}$ solution of the following SDEs on $K$ with initial condition $x\in K$: 
\begin{equation}
\label{EDS}
dX_t^{\gamma}=\sqrt{\gamma} \sigma(X_t^{\gamma})dW_t + b(X_t^{\gamma})dt+\sigma_0(X_t^{\gamma})dB_t, \quad X_0^{\gamma}(x)=x\in K 
\end{equation}
where $\sigma,\sigma_0:K\to \mathcal{M}_n(\R)$ and $b:K\to\R^n $ are Lipschitz functions on $K$ and $(W_t)_{t\ge 0}$ and $(B_t)_{t\ge 0}$ are two independent Wiener processes on $\R^n$. \\

\begin{remark}\label{coeffvar}
All the results proved in this article would still hold if we considered the diffusions processes solution of:$$dX_t^{\gamma}=\sqrt{\gamma} \sigma(X_t^{\gamma})dW_t + b^{\gamma}(X_t^{\gamma})dt+\sigma_0^{\gamma}(X_t^{\gamma})dB_t, \quad X_0^{\gamma}(x)=x\in K.$$
where: $$\left\{\begin{array}{ll}
\underset{\gamma\to+\infty}{\lim} \sigma_0^{\gamma}&=\sigma_0 \\
\underset{\gamma\to+\infty}{\lim} b^{\gamma}&=b.
\end{array}\right.$$
However, in order not to overcharge the proofs with purely technical difficulties, we will from now on only consider \eqref{EDS}. 
\end{remark}
We assume (see Remark \ref{r2}) that $\sigma,\sigma_0$ and $b$ are such that $X_t^{\gamma}(x)\in K$ for any $t\ge 0$ and any $\gamma\ge 0$ and thus this is also true for $(X_t)_{t\ge 0}:=(X_t(x))_{t\ge 0}$ the solution of the following SDE on $K$ with initial condition $x$: 
\begin{equation}
\label{eq:purenoise-007}
dX_t=\sigma(X_t)dW_t, \quad X_0(x)=x.
\end{equation}
We furthermore assume for the rest of the article that $\sigma$ is null only for a finite number of points and we denote $K_0=\{x\in K, \ \sigma(x)=0\}$.\\

The infinitesimal generator of the Markov process generated by $\eqref{EDS}$ is $\La_{\gamma}=\gamma \La^{(1)}+\La^{(0)}$ where \begin{equation}
\label{generateur}
     \La^{(0)}=\langle b, \nabla_x\rangle+\frac{1}{2}\langle \sigma_0\sigma_0^{\dagger} \nabla_x, \nabla_x\rangle \quad \text{and} \quad  \La^{(1)}=\frac{1}{2}\langle  \sigma\sigma^{\dagger} \nabla_x, \nabla_x\rangle
\end{equation}
are respectively the generators of the subdominant and dominant processes. 
\begin{remark}
\label{compacite}
Since $K$ is compact and $b,\sigma$ and $\sigma_0$ are Lipschitz, we have that these functions are bounded on $K$.  
\end{remark}
\begin{remark}\label{r1}
In view of the discussion in the introduction, equation $\eqref{EDS}$ is not completely generic, since we assumed that the dominant process is pure noise, i.e. does not have any drift term. This implies in particular that the dominant process is a martingale. 
\end{remark}
\begin{remark}\label{r2} The fact that for any $x\in K$ one has $X_t^{\gamma}(x)\in K$ and $X_t(x)\in K$ for arbitrary times implies several constraints on $\sigma,\sigma_0$ and $b$ on the boundary of $K$. More precisely the process stays in $K$ whatever the starting point is if and only if for any point $x$ of a side of the simplex, $\sigma(x)$ and $\sigma_0(x)$ are parallel to this side and the vector $b(x)$ is null or points to the interior of $K$. We have thus that $\sigma$ and $\sigma_0$ are null on the vertices of $K$ since they are there parallel to two different sides.
\end{remark}
\begin{remark}\label{convexe}
We may ask ourselves if the results proved in this paper would still hold if we only assumed that $K$ is a compact convex set of $\R^n$. However, problems arise quickly if there is a point $x$ of the boundary of $K$ not in $K_0$ where the curvature is strictly positive. Indeed, the dominant process starting from $x$ will then escape $K$ with a strictly positive probability whatever the value of $\sigma(x)$. Furthermore, even when we add a non-null drift pointing to the interior everywhere on the boundary, preventing our process from escaping $K$, we observe that $X^{\gamma}$ does not always converges in law to a process living on $K_0$ when $\gamma$ approaches infinity: the limit process can for instance live in the entire boundary of $K$. To force the limit process to live on $K_0$ when $K$ is not a polytope, we have to introduce a dominant drift of a particular form, which creates new conceptual and technical difficulties we will not deal with in this article.

\end{remark}

Before studying $(X_t^{\gamma})_{t\ge 0}$ we will first consider the dominant process $(X_t)_{t\ge 0}$. In Theorem \ref{ergodique} it is proved that for all $x$, the process $(X_t(x))_{t\ge 0}$ converges almost surely to a random variable $X_{\infty}:=X_{\infty}(x)\in K_0$ as $t$ goes to $+\infty$. We may thus consider for any $z\in K_0$ the function: \begin{equation}\label{H_z}H_z : x\in K \longmapsto \mathbb{P}(X_{\infty}(x)=z)\in [0,1].\end{equation} that gives the probability that $(X_t(x))_{t\ge 0}$ converges to $z$.\\

Now that all our objects are well defined we may state a version of the ergodic theorem for $(X_t)_{t\ge 0}$. The uniformity proved in this theorem is fundamental for the derivation of the main theorem.

\begin{theorem}\label{ergodique} (Uniform Ergodic Theorem)\\
Assume that for any $z\in K_0$, the function $H_z$ is continuous. Then, for any Lipschitz function  $f: K\to \mathbb R$, we have that: $$e^{t\La^{(1)}} f\underset{t\to+\infty}{\longrightarrow} \Pa f,$$
where the convergence is uniform in $x$ and the projector $\Pa$ is defined by: \begin{equation}\Pa f(x)=\sum_{z\in K_0} H_z(x) f(z). \label{formule ergodique}\end{equation}

\end{theorem}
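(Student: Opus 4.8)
The plan is to decompose $e^{t\La^{(1)}}f(x) = \mathbb{E}_x(f(X_t))$ and exploit the almost sure convergence $X_t(x) \to X_\infty(x) \in K_0$ guaranteed by the existence assertion underlying \eqref{H_z}. For a fixed starting point $x$, dominated convergence gives $\mathbb{E}_x(f(X_t)) \to \mathbb{E}_x(f(X_\infty)) = \sum_{z \in K_0} H_z(x) f(z) = \Pa f(x)$ pointwise, since $K_0$ is finite and $f$ is continuous on the compact set $K$ hence bounded. So the content of the theorem is entirely the \emph{uniformity} in $x$, and this is where the continuity hypothesis on the $H_z$ must be used.

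First I would record the easy structural facts: $K_0$ is finite, $K$ is compact, $f$ is Lipschitz hence bounded and uniformly continuous, and $\Pa f(x) = \sum_{z \in K_0} H_z(x) f(z)$ is continuous on $K$ by the hypothesis that each $H_z$ is continuous. I would also note that $X_t$ is a bounded martingale (Remark \ref{r1}), so $\mathbb{E}_x(f(X_t))$ is a well-behaved family. Next, the strategy for uniformity: suppose for contradiction that there exist $\varepsilon > 0$, a sequence $t_k \to \infty$ and points $x_k \in K$ with $|\mathbb{E}_{x_k}(f(X_{t_k})) - \Pa f(x_k)| \ge \varepsilon$. By compactness, extract $x_k \to x_\infty \in K$. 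The goal is then to show $\mathbb{E}_{x_k}(f(X_{t_k})) \to \Pa f(x_\infty)$, which contradicts the lower bound since $\Pa f(x_k) \to \Pa f(x_\infty)$ by continuity.

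To run this argument one needs a continuity-in-the-initial-condition statement for the flow that is uniform over the diverging time horizon; this is the main obstacle. The natural tool is the uniform ergodic theorem for martingales proved in the appendix, which the introduction advertises precisely for this purpose: it should give that, along a suitable time scale, the law of $X_t(x)$ depends continuously on $x$ uniformly in large $t$. Concretely, I expect one splits time as $t = s + (t-s)$ with $s$ large but fixed; by the appendix result and the Markov property, $\mathbb{E}_x(f(X_t)) = \mathbb{E}_x\big(g_{t-s}(X_s)\big)$ where $g_u = e^{u\La^{(1)}}f$, and for $s$ large $X_s$ is already close (in probability, uniformly in $x$ via the continuity of the $H_z$) to its limit point in $K_0$; since $K_0$ is finite and each $z \in K_0$ is fixed by the dominant dynamics ($\sigma(z)=0$, so $X_u(z) = z$), $g_{t-s}(z) = f(z)$ exactly, and one transfers this to $X_s$ using uniform continuity of $g_{t-s}$ — itself controlled because $g_u \to \Pa f$ and $\Pa f$ is uniformly continuous on the compact $K$. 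Assembling these estimates, letting first $t \to \infty$ then $s \to \infty$, yields the uniform bound; the delicate point throughout is that every "close to $K_0$'' statement must be made uniform in $x$, which is exactly what the continuity of $H_z$ buys and what the appendix's martingale ergodic theorem is designed to deliver.
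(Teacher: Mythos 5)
Your general shape is right --- pointwise convergence via martingale convergence plus dominated convergence, then a compactness argument (either by contradiction along a sequence $x_k\to x_\infty$, or equivalently by covering $K$ with finitely many balls) to upgrade to uniformity --- and you correctly flag that the whole difficulty is a continuity-in-initial-condition estimate that is \emph{uniform over the diverging time horizon}. But as written the plan is circular in two places, and the two technical ideas that actually close the gap in the paper are absent.

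First, the circularity. You propose to invoke ``the uniform ergodic theorem for martingales proved in the appendix'' as a tool. But the appendix \emph{is} the proof of Theorem \ref{ergodique}; there is no separate martingale ergodic theorem to cite. Similarly, you argue that $g_{t-s}=e^{(t-s)\La^{(1)}}f$ is ``uniformly continuous --- itself controlled because $g_u \to \Pa f$ and $\Pa f$ is uniformly continuous.'' That reasoning only works if the convergence $g_u \to \Pa f$ is already uniform, which is the conclusion you are trying to reach; pointwise convergence of continuous functions does not give a common modulus of continuity.

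Second, the missing mechanism. The paper does \emph{not} pass through the Markov-property split $t = s + (t-s)$. Instead it exploits two specific facts. (i) Because $\La^{(1)}$ is pure noise, $\|X_t(x)-X_t(x')\|_2^2$ is a \emph{submartingale} under the synchronous coupling, so $\mathbb{E}\bigl(\|X_t(x)-X_t(x')\|_2^2\bigr)$ is nondecreasing in $t$ and hence bounded by its limit $\mathbb{E}\bigl(\|X_\infty(x)-X_\infty(x')\|_2^2\bigr)$; combined with the $L^2$ martingale convergence of $X_t(x)$ to $X_\infty(x)$, this yields, for all $t$,
\begin{equation}
\bigl|e^{t\La^{(1)}}f(x')-\Pa f(x')\bigr| \le L\,\mathbb{E}\bigl(\|X_t(x)-X_\infty(x)\|_2^2\bigr)^{1/2} + 2L\,\mathbb{E}\bigl(\|X_\infty(x)-X_\infty(x')\|_2^2\bigr)^{1/2},
\end{equation}
i.e.\ a time-uniform equicontinuity bound for free, with no appeal to any limiting uniformity. (ii) To make $\mathbb{E}\bigl(\|X_\infty(x)-X_\infty(x')\|_2^2\bigr)$ small for $x'$ near $x$, the paper uses the continuity hypothesis on the $H_z$ through the map $x\mapsto H(x,\cdot)\in\mathcal P(K)$, combined with Lemma \ref{continuite} (a Wasserstein-distance estimate), Heine's theorem for uniform continuity on the compact $K$, the fact $H(z,\cdot)=\delta_z$ for $z\in K_0$, and the strong Markov / time-homogeneity of the dominant flow. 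Your sketch asserts the needed uniform closeness to $K_0$ ``via the continuity of the $H_z$'' but does not say how this is extracted. Finally, note the paper also needs (and proves, via Dambis--Dubins--Schwarz and monotonicity of the bracket) that $X_\infty(x)\in K_0$ almost surely; you take this for granted. Without the submartingale observation and the Wasserstein lemma, the contradiction argument you outline cannot be completed.
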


\begin{proof}
The proof of this theorem is postponed to the Appendix. 
\end{proof}

\begin{remark}[A crucial example]
\label{example} 
The continuity hypothesis in Theorem \ref{ergodique} is for example fulfilled in a specific case that is the one we are interested in this article.\\

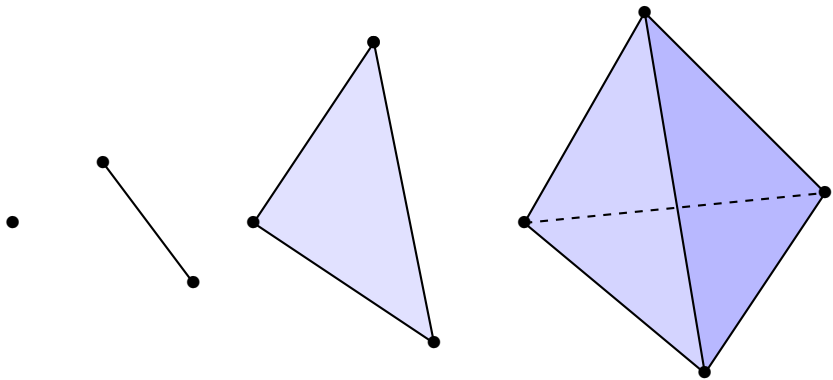
\begin{figure}[h]
\begin{tikzpicture}[scale=0.80]
\draw[black, ultra thick] (-0.5,0) node{$\bullet$} {};

\draw[black, ultra thick] (1,1) node{$\bullet$} {};
\draw[black, ultra thick] (2.5,-1) node{$\bullet$} {};
\draw[black,thick] (1,1) -- (2.5,-1);

\fill[color=blue!12] (3.5,0)--(5.5,3)--(6.5,-2)-- cycle;
\draw[black, ultra thick] (3.5,0) node{$\bullet$} {};
\draw[black, ultra thick] (5.5,3) node{$\bullet$} {};
\draw[black, ultra thick] (5.5,3) node{$\bullet$} {};
\draw[black, ultra thick] (6.5,-2) node{$\bullet$} {};
\draw[black,thick] (3.5,0) -- (5.5,3);
\draw[black,thick] (3.5,0) -- (6.5,-2);
\draw[black,thick] (5.5,3) -- (6.5,-2);

\fill[color=blue!17] (8,0)--(10,3.5)--(11,-2.5)-- cycle;
\fill[color=blue!28] (13,0.5)--(10,3.5)--(11,-2.5)-- cycle;
\draw[black, ultra thick] (8,0) node{$\bullet$} {};
\draw[black, ultra thick] (11,-2.5) node{$\bullet$} {};
\draw[black, ultra thick] (13,0.5) node{$\bullet$} {};
\draw[black, ultra thick] (10,3.5) node{$\bullet$} {};
\draw[black,thick] (8,0) -- (11,-2.5);
\draw[black,thick] (8,0) -- (10,3.5);
\draw[black,thick] (10,3.5) -- (11,-2.5);
\draw[black,thick] (13,0.5) -- (11,-2.5);
\draw[black,thick] (13,0.5) -- (10,3.5);
\draw[black,thick, dashed] (8,0) -- (13,0.5);
\end{tikzpicture}
\caption{A polytope of $\R^n$ has at least $n+1$ vertices.}
\end{figure}

Let us assume that the set $K_0$ is composed of $n+1$ points of $\R^n$ that are affinely independent, that is to say that $K_0$ is not included in an affine hyperplane of $\R^n$. Then the functions $H_z$ for $z\in K_0$ are affine functions. Indeed, since we have $n+1$ affinely independent points on a $n$-dimensional vector space, we get that for any $z\in K_0$, there exists an affine function $f_z$ such that for all $y\in K_0$: $$f_z(y)=\left\{\begin{array}{ll}
     0& \text{ if $y\neq z$,}  \\
     1& \text{ if $y=z$.}
\end{array} \right.$$
Since $(X_t(x))_{t\ge 0}$ is a martingale\footnote{More exactly, each coordinate of $(X_t(x))_{t\ge 0}$ is a real martingale.} living in a bounded space, it converges almost surely to a random variable $X_{\infty}(x)$ taking its values in $K_0$ (see the proof of the uniform ergodic theorem in the Appendix for more details). Thus for any $x\in K$: $$x=\mathbb{E}(X_0(x))=\mathbb{E}(X_{\infty}(x))=\sum_{y\in K_0} \mathbb{P}_x(X_{\infty}(x)=y)y.$$

Let $z\in K_0$, applying $f_z$ to last equation and using that $f_z$ is affine we get: 
\begin{equation*}
\begin{split}
    f_z(x)&=f_z\left(\sum_{y\in K_0} \mathbb{P}_x(X_{\infty}=y)y\right) 
    =\sum_{y\in K_0} \mathbb{P}_x(X_{\infty}=y) f_z(y)\\
    &=\mathbb{P}_x(X_{\infty}=z) =H_z(x).
    \end{split}
\end{equation*}
Furthermore, if we suppose that $K_0$ contains $n+1$ affinely independent points in a $n$-dimension space, we know actually that the points of $K_0$ are exactly the extremal points of the simplex $K$ since the noise is necessarily null on them (see Remark $\ref{r2}$), and a polytope of $\R^n$ has at least $n+1$ extremal points.
\end{remark}

\subsection{Statements}
Now that all the objects of the problem are now well defined, we may state our first main theorem.
\begin{theorem}
\label{mtheorem}
We assume that the cardinal of $K_0$ is equal to $n+1$ and the points of this set are affinely independent. Then for any Lipschitz function $f: K\to \mathbb R$ and any probability measure $\mu$ on $K$, we have for any $t>0$:
 $$\mathbb{E}_{\mu}\left(f\left(X_t^{\gamma}\right)\right) \underset{\gamma\to+\infty}{\longrightarrow} \mathbb{E}_{\overline{\mu}}\left(\overline{f}\left(\overline{X}_t\right) \right),$$
where $\overline{f}$ is the restriction of $f$ to $K_0$ and $(\overline{X}_t)_{t\ge 0}$ is the pure jump continuous time Markov process of the finite state space $K_0$ with initial distribution $\overline{\mu}$ and with generator $\overline{\La}$ given by 
$$\overline{\La}=\left(b(x)\cdot \nabla_x H_z(x) \right)_{x,z\in K_0}, \quad \overline{\mu}(z)=\int_{K} H_z(x)\mathrm{d}\mu(x) \text{ for $z\in K_0$}.$$
\end{theorem}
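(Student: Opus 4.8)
The plan is to realise rigorously, in this concrete setting, the formal ``meta-theorem'' of the introduction, following the strategy of the finite-dimensional homogenisation theorem \cite[Theorem~3.1]{BBCCNP} but now for the unbounded generators in \eqref{generateur}; the two essential inputs will be the uniform ergodic theorem (Theorem~\ref{ergodique}) and an a priori estimate showing that, as $\gamma\to\infty$, the process $X^\gamma$ spends asymptotically all of its time near $K_0$. First one reduces to $\mu=\delta_x$ with convergence pointwise in $x$ (the general case then follows by dominated convergence, since $\|e^{t\La_\gamma}f\|_\infty\le\|f\|_\infty$), so that the target becomes $\sum_{z\in K_0}H_z(x)\,\mathbb{E}_z(\overline f(\overline X_t))$. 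Under the standing hypothesis the points of $K_0$ are the $n+1$ affinely independent vertices of $K$, so by Remark~\ref{example} each $H_z$ is the $z$-th barycentric coordinate, an affine function; therefore $\La^{(1)}H_z=0$, the operator $\Pa$ of \eqref{formule ergodique} is the spectral projector onto $\ker\La^{(1)}=\mathrm{span}\{H_z:z\in K_0\}$ (the affine functions on $K$), and $\La^{(0)}H_z=b\cdot\nabla H_z$ because the Hessian of $H_z$ vanishes. A direct computation then identifies $\La_\infty:=\Pa\La^{(0)}\Pa$, read on $\ker\La^{(1)}$ through restriction to $K_0$, with the matrix $\overline{\La}=(b(y)\cdot\nabla H_z(y))_{y,z\in K_0}$, and identifies $\Pa\,e^{t\La_\infty}\Pa f$ at $x$ with $\sum_z H_z(x)\mathbb{E}_z(\overline f(\overline X_t))$; moreover $\overline{\La}$ is a conservative generator of a pure-jump Markov process on $K_0$, its rows summing to $0$ because $\sum_z H_z\equiv1$ and its off-diagonal entries being $\ge0$ because, by Remark~\ref{r2}, at a vertex $y\ne z$ the vector $b(y)$ is null or points into $K$ while $\nabla H_z(y)$ has a non-negative component along every inward direction.

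\emph{Localisation near $K_0$.} The key a priori bound uses the Lyapunov function $V:=1-\sum_{z\in K_0}H_z^2$, a quadratic polynomial with $V\ge0$ on $K$, $V^{-1}(0)=K_0$ and $V\asymp\mathrm{dist}(\cdot,K_0)$ on $K$. Since the $H_z$ are affine one computes $\La^{(1)}V=-g$ with $g:=\sum_{z\in K_0}\|\sigma^\dagger\nabla H_z\|_2^2\ge0$, and $g$ vanishes exactly on $K_0$ because the gradients $\nabla H_z$ span $\R^n$. As $\La^{(0)}V$ is bounded (Remark~\ref{compacite}), Dynkin's formula applied to $V$ along $X^\gamma$ gives
\[
\gamma\,\mathbb{E}_x\!\int_0^t g(X_s^\gamma)\,ds \;=\; V(x)-\mathbb{E}_x V(X_t^\gamma)+\mathbb{E}_x\!\int_0^t\La^{(0)}V(X_s^\gamma)\,ds\;\le\;\|V\|_\infty+t\,\|\La^{(0)}V\|_\infty,
\]
so $\sup_{x\in K}\mathbb{E}_x\!\int_0^t g(X_s^\gamma)\,ds=O(1/\gamma)$; since $g$ is continuous, non-negative and vanishes only on $K_0$, a compactness (modulus-of-continuity) argument upgrades this to $\sup_{x\in K}\mathbb{E}_x\!\int_0^t\mathrm{dist}(X_s^\gamma,K_0)\,ds\to0$, i.e.\ the occupation measure of $X^\gamma$ on $[0,t]$ concentrates on $K_0$, uniformly in the starting point.

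\emph{Passage to the limit.} Split $f=\Pa f+r$, with $\Pa f=\sum_z H_z(\cdot)f(z)$ affine and $r:=f-\Pa f$ Lipschitz and vanishing on $K_0$. For the affine part, Dynkin's formula applied to the test functions $H_z$ closes on itself, because on $\ker\La^{(1)}$ the generator $\La_\gamma$ reduces to the first-order operator $b\cdot\nabla$: writing $\phi^\gamma_z(t):=\mathbb{E}_x H_z(X_t^\gamma)$,
\[
\phi^\gamma_z(t)=H_z(x)+\int_0^t\Big(\textstyle\sum_{z'\in K_0}\phi^\gamma_{z'}(s)\,\overline{\La}_{z',z}\Big)ds+\eta^\gamma_z(t),
\]
where $\eta^\gamma_z$ records the error of replacing $b(X_s^\gamma)\cdot\nabla H_z$ by its barycentric interpolation $\sum_{z'}H_{z'}(X_s^\gamma)\,b(z')\cdot\nabla H_z$; by Lipschitzness of $b$ and the barycentric identity $X_s^\gamma=\sum_{z'}H_{z'}(X_s^\gamma)z'$ this error is $\le C\,\mathbb{E}_x\!\int_0^t\mathrm{dist}(X_s^\gamma,K_0)\,ds$, which tends to $0$ by the previous step, so $\sup_{s\in[0,t]}\|\eta^\gamma(s)\|\to0$ and Gronwall's lemma gives $\phi^\gamma(t)\to (H_z(x))_{z\in K_0}\,e^{t\overline{\La}}$, that is, $e^{t\La_\gamma}(\Pa f)(x)\to\sum_z H_z(x)\mathbb{E}_z(\overline f(\overline X_t))$: the full target. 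It remains to show $\sup_{x}|e^{t\La_\gamma}r(x)|\to0$; as $|e^{t\La_\gamma}r(x)|\le\mathrm{Lip}(r)\,\mathbb{E}_x\mathrm{dist}(X_t^\gamma,K_0)\le C\,\mathbb{E}_x V(X_t^\gamma)$, this amounts to $\sup_x\mathbb{E}_x V(X_t^\gamma)\to0$. Here Theorem~\ref{ergodique} is used: applied to $V$ it gives $\|e^{\tau\La^{(1)}}V\|_\infty\to0$ as $\tau\to\infty$ (because $\Pa V=0$), hence $\|e^{\tau\gamma\La^{(1)}}V\|_\infty\to0$ as $\gamma\to\infty$ for each fixed $\tau>0$; one then restarts $X^\gamma$ at time $t-\delta$ by the Markov property and compares, on the window $[t-\delta,t]$, the semigroup $e^{\delta\La_\gamma}$ with $e^{\delta\gamma\La^{(1)}}$ via Duhamel's formula, in the coupled limit $\delta\to0$, $\gamma\delta\to\infty$, absorbing the contribution of the perturbation $\La^{(0)}$ through the localisation estimate above. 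Recombining $f=\Pa f+r$ and integrating against $\mu$ concludes.

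\emph{The main obstacle.} The delicate point is precisely this last estimate — the \emph{uniform, arbitrarily short-time} relaxation of $X^\gamma$ onto the vertices. Because $\La^{(1)}$ is a degenerate second-order operator, not uniformly elliptic and — the ergodic theorem carrying no rate — with no spectral gap, one cannot simply combine a Trotter--Kato splitting with parabolic smoothing: the commutator $[\gamma\La^{(1)},\La^{(0)}]$ is of size $\gamma$. Controlling $\La^{(0)}$ applied to functions already evolved by the fast semigroup $e^{s\gamma\La^{(1)}}$, uniformly in $x$ and through the coupled limit, using only the qualitative uniform ergodicity together with the Lyapunov bound of the previous step in place of a quantitative rate, is the technical heart of the proof; this is also where the affine independence of $K_0$ is essential, and what, in its absence, forces the weaker Meyer--Zheng mode of convergence. (An alternative route to the marginal convergence claimed here — which bypasses this single-time estimate — is to establish tightness of $(X^\gamma)$ in the Meyer--Zheng topology, the predictable drift of $X^\gamma$ having uniformly bounded variation, and to identify every limit point, which lies in $K_0$ for almost every time by the localisation step, as the jump process $\overline X$ by passing to the limit in Dynkin's formula for the affine test functions; convergence at every fixed $t>0$ then follows since $\overline X$ has almost surely no fixed time of discontinuity.)
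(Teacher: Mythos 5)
Your proposal has the right architecture and matches the paper's proof in its broad strokes — reduce to the barycentric coordinates $H_z$, observe that $\mathcal P$ is the projection onto affine functions, close a Gr\"onwall-type system for $e^{t\mathcal L_\gamma}H_z$ using the fact that $\mathcal L^{(1)}$ annihilates affine functions, and use a concentration estimate near $K_0$ to control the replacement of $b$ by its affine interpolation $\tilde b$ as well as the discrepancy between $f$ and an affine surrogate. Your Lyapunov function $V=1-\sum_z H_z^2$ is a genuinely nice addition that the paper does not use; it cleanly delivers the \emph{time-integrated} concentration $\sup_x\mathbb E_x\int_0^t \mathrm{dist}(X_s^\gamma,K_0)\,\mathrm ds\to0$ from a two-line Dynkin computation, whereas the paper reaches a comparable end via its Corollary~\ref{corollary} plus dominated convergence. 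For the affine part $\mathcal Pf$, your Dynkin/Gr\"onwall closure is correct and plays the same role as the paper's comparison of $X^\gamma$ with the process $Y^\gamma$ driven by $\tilde b$.

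However, there is a genuine gap at what you yourself call the ``technical heart''. To dispose of the remainder $r=f-\mathcal Pf$ you need the \emph{fixed-time} concentration $\sup_x\mathbb E_x V(X_t^\gamma)\to0$, which does \emph{not} follow from the time-integrated Lyapunov bound (the process could, on an $O(1/\gamma)$-measure set of times including $t$, wander far from $K_0$). This is precisely the content of the paper's Corollary~\ref{corollary}, and it is the nontrivial step. Your sketch — restart at $t-\delta$ and compare $e^{\delta\mathcal L_\gamma}$ with $e^{\delta\gamma\mathcal L^{(1)}}$ via Duhamel in the coupled limit $\delta\to0$, $\gamma\delta\to\infty$ — would require controlling $\mathcal L^{(0)}e^{s\gamma\mathcal L^{(1)}}V$ uniformly, i.e.\ first and second derivatives of the degenerate fast semigroup applied to $V$; these are not bounded a priori, and the uniform ergodic theorem gives no quantitative handle on them. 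The paper circumvents exactly this obstacle with Lemma~\ref{approx1}: a \emph{pathwise} coupling of $X_h^{\gamma/h}$ with the pure-noise process $Z_\gamma$ through the same driving Brownian motion, an It\^o isometry computation, and Gr\"onwall, which yields the explicit bound $|e^{h\mathcal L_{\gamma/h}}f-e^{\gamma\mathcal L^{(1)}}f|\le C_1(h+h^2)^{1/2}e^{C_2\gamma}$ without ever differentiating the semigroup; the exponential-in-$\gamma$ constant then forces the delicate choice $h(\gamma)\sim(\beta e^{2C_2\beta t})^{-1}$ that makes both this error and the ergodic-theorem error vanish simultaneously. None of this is in your proposal, and without it the proof of the remainder term is not complete.

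Your parenthetical ``alternative route'' also does not bypass the difficulty. Weak convergence in the Meyer--Zheng topology only controls $X^\gamma_t$ for Lebesgue-almost every $t$; to upgrade to \emph{every} fixed $t$ one needs, on top of $\overline X$ having no fixed jump time, an equicontinuity estimate in $t$ of the one-dimensional marginals of $X^\gamma$ uniform in $\gamma$ — but for a non-affine test function the term $\sqrt\gamma\int_s^t\sigma\,\mathrm dW$ destroys any such estimate unless one already has the concentration near $K_0$ at the fixed time. In the paper the logical dependence in fact runs the other way: Theorem~\ref{gtheorem} is derived from Theorem~\ref{mtheorem} (through Corollary~\ref{corollary}), not vice versa.
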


\begin{remark}
\label{reformulation}
Using the definition of $\Pa$ in $\eqref{formule ergodique}$, we have for any Lipschitz function $f:K\to \R$ that $\Pa f$ is the unique affine function on $K$ equal to $f$ on $K_0$. Thus, for any affine function $g$, we have that $\La_{\infty} g=\Pa \La^{(0)} \Pa g$ is affine and we can prove (see the proof of Theorem \ref{mtheorem}) that for $z\in K_0$: $$\La_{\infty} g(z)=\overline{\La} \overline{g}(z).$$ Eventually for any Lipschitz function $f$ and any $x\in K$: \begin{align*}
    \Pa e^{t\La_{\infty}} \Pa f(x)&=\sum_{z\in K_0} H_z(x) e^{t\La_{\infty}} \Pa f(z) =\sum_{z\in K_0} H_z(x) e^{t\overline{\La}} \overline{\Pa f} (z) \\
    &=\sum_{z\in K_0} H_z(x) e^{t\overline{\La}} \overline{f} (z),
\end{align*}
so Theorem \ref{mtheorem} is a reformulation of the `meta-theorem' $\eqref{ResultatGeneral}$.
\end{remark}
\begin{remark}\label{r4}
The geometrical hypothesis on $K_0$ may seem restrictive, but it is not clear what should be a more general statement. We will give in the last section a counter example of the theorem where $K\subset\R^2$ and $|K_0|=4$.
\end{remark}

Theorem \ref{mtheorem} provides the convergence of the semigroup $e^{t\La_{\gamma}}$ to the semigroup $e^{t\overline{\La}}$, that is to say the pointwise convergence in law of $(X_t^{\gamma})_{t\ge0}$ to $(\overline{X}_t)_{t\ge 0}$. It does not say anything about the convergence of $(X_t^{\gamma})_{t\ge 0}$ to $(\overline{X}_t)_{t\ge 0}$ at the path level. One has for every $\gamma>0$ that the paths of both processes $(X_t^{\gamma})_{t\ge 0}$ and $(\overline{X}_t)_{t\ge0}$ belong to $\mathbb{D}(\R_+, K)$. The natural topology on this space is the Skorokhod one, but we cannot in fact expect a weak convergence for this topology. Indeed as underlined in \cite[Theorem 13.4]{der}, weak convergence of processes with continuous paths in the Skorokhod topology yields a limiting process with continuous paths, and, while for $\gamma>0$, $(X_t^{\gamma})_{t\ge 0}$ has continuous paths almost surely, we have that $(\overline{X}_t)_{t\ge 0}$ does have discontinuous paths almost surely if $b$ is non-null on every vertex. To overcome this difficulty, we use the idea of \cite{BBCCNP} which is to replace the Skorokhod topology by the so-called Meyer--Zheng topology. \\

Let us define the Meyer--Zheng topology:
\begin{definition}
Consider a Euclidean space $(E,\|\cdot\|$) and denote by $\mathbb{L}^0=\mathbb{L}^0(\R_+,E)$ the space of $E$-valued Borel functions on $\R_+$. Given a sequence $\{w^\gamma \, ,  \ \gamma>0\}$ of elements of $\mathbb{L}_0(\R_+,E)$, the following assertions are equivalent and define the convergence in Meyer--Zheng topology of $\{ w^{\gamma}\ , \ \gamma > 0\}$ to $w\in \mathbb{L}^0(\R_+,E)$:
\begin{itemize}
    \item For all bounded continuous functions $f:\R_+\times E\to \R$: $$\underset{\gamma\to+\infty}{\lim}\int_0^{+\infty} f(t,w_t^{\gamma})e^{-t}\mathrm{d}t=\int_0^{+\infty} f(t,w_t)e^{-t}\mathrm{d}t.$$
    \item For $\lambda(\mathrm{d}t)=e^{-t}\mathrm{d}t$, we have for all $\varepsilon>0$:
    $$\underset{\gamma\to+\infty}{\lim} \lambda(\{s\in \R_+ | \|w_s^{\gamma}-w_s\|\ge \varepsilon\})=0.$$
    \item $\underset{\gamma\to+\infty}{\lim} d(w^{\gamma},w)=0$ where $d$ is defined by: $$d(w,w')=\int_0^{+\infty}\left\{1\wedge \|w_t-w_t'\|\right\}e^{-t}\mathrm{d}t.$$
\end{itemize}
The distance $d$ metrizes the Meyer--Zheng topology on $\mathbb{L}^0$ and $(\mathbb{L}^0,d)$ is a Polish space.
\end{definition}
We may now formulate our second main theorem:

\begin{theorem}\label{gtheorem}
Under the geometrical hypothesis of Theorem \ref{mtheorem}, we have that: $$\underset{\gamma\to+\infty}{\lim} X^{\gamma}=\overline{X}, \ \ \ \ \text{weakly in $\left(\mathbb{L}^0(\R_+,K),d\right)$}$$
where $(\overline{X}_t)_{t\ge 0}$ is the continuous-time pure jump Markov process on $K_0$ defined in Theorem \ref{mtheorem}. In other words, for all bounded continuous function $F:(\mathbb{L}^0(\R,K),d)\to (\R,|\cdot|)$, one has: $$\mathbb{E}\left(F\left(X^{\gamma}\right)\right)\underset{\gamma\to+\infty}{\longrightarrow}\mathbb{E}\left(F\left(\overline{X}\right)\right).$$
\end{theorem}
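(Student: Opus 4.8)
The plan is to upgrade the finite-dimensional (pointwise-in-time) convergence of Theorem \ref{mtheorem} to weak convergence in the Meyer--Zheng topology on $\mathbb{L}^0(\R_+,K)$. The strategy follows \cite{BBCCNP}: in the Meyer--Zheng setting, weak convergence of processes is implied by (i) tightness of the laws of $\{X^\gamma\}$ in $\mathbb{L}^0(\R_+,K)$ together with (ii) convergence of the finite-dimensional distributions at a dense set of times. Since $K$ is compact, part (i) is in fact automatic: the classical Meyer--Zheng criterion says a sequence of $K$-valued processes is tight in $(\mathbb{L}^0,d)$ as soon as $K$ is relatively compact (no quasi-martingale / conditional-variation bound is even needed here because the state space is bounded; alternatively, the ``$V$-variation'' of the coordinate martingale parts can be controlled using Remark \ref{r1}, but compactness already suffices). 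So the substance is part (ii).

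First I would reduce part (ii) to the statement already available. By Prohorov's theorem and the metrizability of $(\mathbb{L}^0,d)$, it suffices to show that every weak subsequential limit of $\{X^\gamma\}$ coincides in law with $\overline X$. To pin down such a limit $\mu^\infty$, I would show that its finite-dimensional marginals at almost every family of times $0\le t_1<\dots<t_k$ agree with those of $\overline X$. For a single time $t$, Theorem \ref{mtheorem} with $\mu=\delta_x$ gives $\mathbb{E}_x(f(X^\gamma_t))\to \mathbb{E}_{\overline{\delta_x}}(\overline f(\overline X_t))$, i.e. the one-dimensional marginals converge. For multi-time marginals I would iterate this using the Markov property of $X^\gamma$ and of $\overline X$ together with the semigroup convergence $e^{t\La_\gamma}\to \Pa e^{t\La_\infty}\Pa$ proved in Theorem \ref{mtheorem}: writing $\mathbb{E}_x\big(\prod_{i} f_i(X^\gamma_{t_i})\big)$ as a nested composition of semigroups $e^{(t_{i+1}-t_i)\La_\gamma}$, I would pass to the limit factor by factor. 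The only care needed is that after applying $e^{(t_{i+1}-t_i)\La_\gamma}$ to a continuous (Lipschitz) function we again get a function to which Theorem \ref{mtheorem} applies uniformly; this is where the uniformity in $x$ in Theorem \ref{ergodique}, and the corresponding uniformity claimed in Theorem \ref{mtheorem}, is essential. Because the limiting marginals are supported on $K_0$ and are exactly those of the finite Markov chain $\overline X$ with generator $\overline\La$, the subsequential limit $\mu^\infty$ has the same finite-dimensional distributions as $\overline X$ at every such $(t_1,\dots,t_k)$; since $\overline X$ is a càdlàg process in $\mathbb{D}(\R_+,K)\subset\mathbb{L}^0$, its law is determined by its finite-dimensional distributions, so $\mu^\infty=\mathrm{Law}(\overline X)$.

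A technical point to be handled is that, in the Meyer--Zheng topology, the evaluation maps $w\mapsto w_t$ are not continuous, so ``convergence of finite-dimensional distributions'' must be phrased carefully: one uses that for each fixed $t$, $w\mapsto w_t$ is continuous $\lambda$-a.e., or more robustly works with the smeared functionals $w\mapsto \int_0^\infty f(t,w_t)e^{-t}\,dt$ from the definition of the topology and recovers time-marginal information by letting the test functions concentrate. Concretely I would show $\int_0^\infty \mathbb{E}_\mu(g(t)f(X^\gamma_t))e^{-t}\,dt \to \int_0^\infty g(t)\,\mathbb{E}_{\overline\mu}(\overline f(\overline X_t))e^{-t}\,dt$ for all continuous $g$ and Lipschitz $f$ — which is immediate from Theorem \ref{mtheorem} by dominated convergence (the integrand is bounded by $\|f\|_\infty$) — and similarly for products over several times, thereby identifying the limit directly through the metric-defining functionals of $d$ and bypassing the discontinuity of evaluation.

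The main obstacle I anticipate is not tightness (which compactness of $K$ trivializes) but the clean passage to the limit in the \emph{multi-time} functionals: one must justify interchanging the $\gamma\to\infty$ limit with the nested semigroup composition, i.e. control $e^{(t_{i+1}-t_i)\La_\gamma} f_i$ uniformly and continuously enough that feeding it into the next application of Theorem \ref{mtheorem} is legitimate. This is exactly the step where the uniform-in-$x$ convergence (Theorems \ref{ergodique} and \ref{mtheorem}) is used in an essential way, and where the argument of \cite{BBCCNP} has to be transcribed to the present unbounded-generator / diffusion-on-a-simplex setting. Once the finite-dimensional distributions are identified and tightness is in hand, the conclusion $X^\gamma \Rightarrow \overline X$ in $(\mathbb{L}^0(\R_+,K),d)$ follows from Prohorov's theorem and the fact that a Borel probability measure on the Polish space $(\mathbb{L}^0,d)$ charging $\mathbb{D}(\R_+,K)$ is determined by its finite-dimensional distributions.
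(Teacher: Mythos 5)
Your high-level architecture — tightness via Meyer--Zheng plus identification of subsequential limits through finite-dimensional distributions, then Prohorov — matches the paper's. But there are two concrete gaps.

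First, you dismiss the conditional-variation bound on the grounds that compactness of $K$ alone gives tightness. It is true that the pseudopath space $\mathbb{L}^0(\R_+,K)$ is compact when $K$ is compact, so relative compactness of the laws is free. But the paper invokes \cite[Theorem 4]{MZ84} (Theorem \ref{critere}) not for tightness alone: the conditional-variation bound is what guarantees that \emph{every subsequential limit is supported on c\`adl\`ag paths}. Your argument silently uses this at the very end, when you appeal to the fact that a law on $(\mathbb{L}^0,d)$ ``charging $\mathbb{D}(\R_+,K)$'' is determined by its finite-dimensional distributions — but nothing in your proposal establishes that the limit law charges $\mathbb{D}$. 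Without c\`adl\`ag support of the limit you cannot conclude; a generic element of $\mathbb{L}^0$ is not determined by a.e.\ finite-dimensional marginals in any useful way. The quadratic-variation estimate $V_\tau(X^\gamma)\le C_K M \tau$, which follows from the boundedness of $b$ exactly because $\La^{(1)}$ is pure noise (Remark \ref{r1}), is therefore not optional.

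Second, the passage to the limit in the multi-time functionals is the core technical content of the proof, and your proposal does not supply the idea that makes it work. You correctly observe that writing $\mathbb{E}\big(\prod_i f_i(X^\gamma_{t_i})\big)$ as a nested composition of $e^{(t_{i+1}-t_i)\La_\gamma}$ is natural, but the issue is that after one application, the inner test function $e^{(t_{i+1}-t_i)\La_\gamma} f_i$ \emph{depends on $\gamma$}, so Theorem \ref{mtheorem} (which is stated for a fixed test function) cannot be applied to it, and uniformity in $x$ does not resolve this. The paper's resolution is twofold and both pieces are essential: (a) linearize $f$ on $K^r$ to the $r$-linear $F=\sum_{\mathbf i} F_{\mathbf i}\prod_k f_{z_{i_k}}$ built from the affine functions $f_z$ of Remark \ref{example}, which is where the geometric hypothesis $|K_0|=n+1$ enters (Lemma \ref{approx2}); and (b) replace the drift $b$ by the affine drift $\widetilde b$ agreeing with $b$ on $K_0$, so that by Lemma \ref{elin} the operator $e^{t\mathcal{G}_\gamma}$ acting on an affine $f_z$ is $\gamma$-\emph{independent}, namely $e^{t\widetilde{\La^{(0)}}} f_z$, with the replacement justified by the uniform-in-$\mu$ version of \eqref{approxlin} and Corollary \ref{corollary}. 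This is what lets the induction on the number of times close. Acknowledging that ``the argument of \cite{BBCCNP} has to be transcribed'' does not substitute for identifying this mechanism; as written the factor-by-factor limit does not go through.
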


\section{Proof of Theorem 2 and Theorem 3}
\label{sec:proofs}

\subsection{Proof of Theorem 2}
Let us start with a quick sketch of the proof of Theorem \ref{mtheorem}.\\

We will first prove Lemma \ref{approx1}, a technical result that essentially says that for $t$ small and $\gamma$ big, our process of generator $\La_{\gamma}$ is basically the same as the one of generator $\La^{(1)}$. The keystone of our proof is however Corollary \ref{corollary}, that is proved using the uniform ergogic theorem combined with Lemma \ref{approx1}. Corollary \ref{corollary} says that for a given $t$ strictly positive, $X_t^{\gamma}$ de facto lives in small balls centered around the $z\in K_0$ when $\gamma$ is big enough. Therefore, when $\gamma$ is big enough, it is like $\left(X_t^{\gamma}\right)_{t\ge 0}$ is jumping from a ball to another. \\

Corollary 1 is true under the geometrical hypothesis of Theorem \ref{mtheorem}, since it is based on the ergodic theorem which is only proved true under it (thanks to Remark \ref{example}). However if this hypothesis is sufficient, it is not necessary: Corollary 1 would still hold if we only knew that the functions $H_z$ defined by $\eqref{H_z}$ are continuous. \\

Nevertheless, if we know that there is a jump process on $K_0$, we a priori do not know its form. Theorem \ref{mtheorem} says that, for a specific geometry, our process converges "weakly" to a continuous-time jump process with state space $K_0$ and generator $\overline{\La}$.\\

To prove that, we will first simplify our problem by approximating the drift $b$ by an affine drift $\widetilde{b}$ such that $b_{|K_0}=\widetilde{b}_{|K_0}$. This approximation is justified by using Corollary \ref{corollary}. For the stochastic process $\widetilde{X}_t^{\gamma}$ associated to the affine drift, the proof of our theorem for an affine test function $g$ but for a arbitrary initial condition $\mu$ is straightforward since we have thus that $\La_{\gamma} g=\La^{(0)}g$ is an affine function and $\Pa$ preserves affine functions. Hence we deduce the theorem for affine test functions even when the drift is not affine. \\

Then, using Corollary \ref{corollary} again, we know that $(X_t^{\gamma})_{t\ge 0}$ de facto lives in a neighborhood of $K_0$. Thus, instead of considering a generic test function $f$, we will consider an affine approximation $g$ of $f$ such that $f_{|K_0}=g_{|K_0}$. The function $g$ exists thanks to our geometrical hypothesis.\\

Eventually, since the theorem is true for $g$ and that one cannot distinguish $f(X_t^{\gamma})$ from $g(X_t^{\gamma})$ when $\gamma$ is big enough, it will still holds for $f$.\\

\bigskip

\begin{lemme}\label{approx1}
Let $f$ be a smooth function on $K$ and $\mu$ be a probability measure on $K$. There exist $C_1$ and $C_2$ two positive constants independent of $\mu$ such that for any $t,\gamma,h>0$, we have: $$\left|\langle \mu, e^{h\La_{\frac{\gamma}{h}}} f \rangle-\langle \mu, e^{\gamma \La^{(1)}}f \rangle \right|\le C_1 (h+h^2)^{\frac{1}{2}} e^{C_2\gamma t}.$$
\end{lemme}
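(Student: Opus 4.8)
The plan is to pass from the semigroups to the underlying diffusions, to rescale time so that the two processes live on a common bounded interval, and then to compare them pathwise by a synchronous coupling, controlled via It\^o's formula and Gr\"onwall's inequality. Only Lipschitz continuity of $f$ is needed, which is automatic since $f$ is smooth on the compact set $K$.

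By the Markov property, $\langle\mu,e^{h\La_{\gamma/h}}f\rangle=\mathbb{E}_\mu\!\left(f(X^{\gamma/h}_h)\right)$, where $X^{\gamma/h}$ solves \eqref{EDS} with parameter $\gamma/h$, and $\langle\mu,e^{\gamma\La^{(1)}}f\rangle=\mathbb{E}_\mu\!\left(f(X_\gamma)\right)$, where $X$ solves \eqref{eq:purenoise-007}. These are evaluated at the very different times $h$ and $\gamma$, so I rescale: set $Y_s:=X^{\gamma/h}_{hs}$ and $Z_s:=X_{\gamma s}$ for $s\in[0,1]$, so that $Y_1=X^{\gamma/h}_h$ and $Z_1=X_\gamma$. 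By the scaling invariance of Brownian motion, $Y$ and $Z$ can be realised on a single probability space, driven by a \emph{common} Brownian motion $\widetilde W$ in the $\sigma$-slot and started from a \emph{common} initial value $\xi\sim\mu$, as the strong (pathwise unique, all coefficients being Lipschitz on $K$) solutions of
\begin{align*}
dY_s&=\sqrt\gamma\,\sigma(Y_s)\,d\widetilde W_s+h\,b(Y_s)\,ds+\sqrt h\,\sigma_0(Y_s)\,d\widetilde B_s,\\
dZ_s&=\sqrt\gamma\,\sigma(Z_s)\,d\widetilde W_s,
\end{align*}
with $Y_0=Z_0=\xi$ and $\widetilde B$ an independent Brownian motion. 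The gain of the rescaling is that the extra terms carried by $Y$ now have size $h$ and $\sqrt h$, whereas at the original time scale the drift would contribute an order-one quantity; note also that $Y$ and $Z$ remain in the compact set $K$, so the expectations below are finite.

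Put $D_s:=Y_s-Z_s$, so $D_0=0$. Applying It\^o's formula to $s\mapsto\|D_s\|_2^2$ and taking expectations removes the stochastic integrals against $\widetilde W$ and $\widetilde B$, and there is no cross term since $\widetilde W\perp\widetilde B$, leaving
$$\frac{d}{ds}\,\mathbb{E}\|D_s\|_2^2=2h\,\mathbb{E}\!\left(D_s\cdot b(Y_s)\right)+\gamma\,\mathbb{E}\big\|\sigma(Y_s)-\sigma(Z_s)\big\|^2+h\,\mathbb{E}\big\|\sigma_0(Y_s)\big\|^2,$$
with $\|\cdot\|$ the Hilbert--Schmidt norm. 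Bounding the middle term by $\gamma L_\sigma^2\,\mathbb{E}\|D_s\|_2^2$ (Lipschitz continuity of $\sigma$), the first by $\mathbb{E}\|D_s\|_2^2+h^2\|b\|_\infty^2$ (Young's inequality together with the boundedness of $b,\sigma,\sigma_0$ on $K$, Remark \ref{compacite}), and the last by $h\,\|\sigma_0\|_\infty^2$, one obtains
$$\frac{d}{ds}\,\mathbb{E}\|D_s\|_2^2\le(1+\gamma L_\sigma^2)\,\mathbb{E}\|D_s\|_2^2+C\,(h+h^2).$$
Gr\"onwall's inequality (with $D_0=0$, using $e^x-1\le xe^x$) gives $\mathbb{E}\|D_1\|_2^2\le C'\,(h+h^2)\,e^{C_2\gamma}$, hence $\mathbb{E}\|D_1\|_2\le\sqrt{C'}\,(h+h^2)^{1/2}\,e^{C_2\gamma/2}$ by Cauchy--Schwarz. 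Since $f$ is Lipschitz on $K$ with some constant $L_f$,
$$\left|\langle\mu,e^{h\La_{\gamma/h}}f\rangle-\langle\mu,e^{\gamma\La^{(1)}}f\rangle\right|=\left|\mathbb{E}_\mu f(Y_1)-\mathbb{E}_\mu f(Z_1)\right|\le L_f\,\mathbb{E}\|D_1\|_2,$$
which is of the announced form $C_1(h+h^2)^{1/2}e^{C_2\gamma t}$ with $C_1,C_2$ depending only on $f$ and on the Lipschitz constants and sup-norms of $\sigma,\sigma_0,b$ — in particular not on $\mu$ — the exponential factor recording the Gr\"onwall dependence over the time interval of the comparison.

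\textbf{Main obstacle.} The one place where care is needed is the time-rescaling: it is exactly what turns the order-one drift into the order-$h$ perturbation responsible for the $(h+h^2)^{1/2}$ smallness, while uniformity in $\mu$ is then free, since the coupling estimate is uniform in the common starting point $\xi$. The exponential blow-up in $\gamma$ is intrinsic to synchronous coupling (it stems from the Lipschitz constant $\sqrt\gamma\,L_\sigma$ of the dominant diffusion) and is harmless because the lemma will be used with $h$ small. An operator-level alternative — writing $e^{h\La_{\gamma/h}}f-e^{\gamma\La^{(1)}}f=h\int_0^1 e^{rh\La_{\gamma/h}}\La^{(0)}e^{(1-r)\gamma\La^{(1)}}f\,dr$ and bounding $\|\La^{(0)}e^{u\gamma\La^{(1)}}f\|_\infty$ through the first and second variation processes of \eqref{eq:purenoise-007} — would require $\sigma\in\mathcal{C}^2$, which is not assumed, so the coupling route is the natural one.
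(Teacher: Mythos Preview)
Your proof is correct and follows essentially the same route as the paper's: a synchronous coupling of the two diffusions after time rescaling, then Gr\"onwall and the Lipschitz bound on $f$ via Cauchy--Schwarz. The only cosmetic differences are that you parametrise by $s\in[0,1]$ whereas the paper works on $[0,\gamma]$, and you apply It\^o's formula to $\|D_s\|_2^2$ where the paper uses the integral form with $(a+b+c)^2\le 3(a^2+b^2+c^2)$ and It\^o isometry; the resulting bounds coincide (and, as in the paper's own proof, the exponent comes out as $C_2\gamma$ rather than $C_2\gamma t$).
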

\begin{proof}~\\
It is sufficient to prove that the inequality is fulfilled for $\mu=\delta_x$ for all $x\in K$. \\

Let $x$ be an arbitrary point of $K$ and recall that $X_t^{\frac{\gamma}{h}}:=X_t^{\gamma}(x)$ be the solution of $$\mathrm{d}X_t^{\frac{\gamma}{h}}=b(X_t^{\frac{\gamma}{h}})\mathrm{d}t+\sigma_0(X_t^{\frac{\gamma}{h}})\mathrm{d}W_t+\sqrt{\frac{\gamma}{h}}\sigma(X_t^{\frac{\gamma}{h}})\mathrm{d}B_t,$$
with initial condition $x$.\\

We now fix $\gamma$.\\

We couple the process $(Z_t^h)_{t\ge 0}=(Z_t^{h}(x))_{t\ge 0}$ solution of: $$Z_t^h=x+\frac{h}{\gamma}\int_0^t b(Z_u^h)\mathrm{d}u+\sqrt{\frac{h}{\gamma}}\int_0^t \sigma_0(Z_u^h)\mathrm{d}W_u+\int_0^{t} \sigma (Z_u^h) \mathrm{d}B_u,$$
with the process $(Z_t)_{t\ge 0}:=(Z_t(x))_{t\ge 0}$ solution of: $$Z_t=x+\int_0^{t} \sigma (Z_u)\mathrm{d}B_u,$$
through the common Brownian motion $(B_t)_t$ is the same in all these processes. The generator of $(Z_t^{h})_{t\ge 0}$ is $$\frac{h}{\gamma}\langle b(x),\nabla_x\rangle+\frac{1}{2}\frac{h}{\gamma}\langle \sigma_0\sigma_0^{\dagger} \nabla_x, \nabla_x\rangle+\frac{1}{2}\langle \sigma\sigma^{\dagger} \nabla_x, \nabla_x\rangle,$$ that is also the generator of $\left(X_{\frac{h}{\gamma}\times t}^{\frac{\gamma}{h}}\right)_{t\ge0}$ by scaling invariance of the Brownian motion. Since they share the same initial distribution, we have for all $h>0$ that:
$$\left(X_{\frac{h}{\gamma}\times t}^{\frac{\gamma}{h}}\right)_{t\ge 0}\overset{\La}{=}(Z_t^h)_{t\ge0},$$
which implies for $t=\gamma$ that: 
$$X_h^{\frac{\gamma}{h}}\overset{\La}{=}Z_{\gamma}^h.$$

Thus using Itô isometry, the fact that $b$ and $\sigma_0$ are bounded by a constant $M$ and that $\sigma$ is $k$-Lipschitz, we have that: \begin{align*}
&\mathbb{E}\left(\|Z_{\gamma}^h-Z_{\gamma}\|_2^2 \right)\\
&\le \mathbb{E}\left(\|\int_0^{\gamma} (\sigma(Z_u^h)-\sigma(Z_u))\mathrm{d}B_u +\sqrt{\tfrac{h}{\gamma}}\int_0^{\gamma} \sigma_0(Z_u^h)\mathrm{d}W_u  \right. \\ &\quad \quad \quad\quad \left. + \tfrac{h}{\gamma} \int_0^{\gamma} b(Z_u^h)\mathrm{d}u\|_2^2\right) \\
&\le 3\mathbb{E}\left(\|\int_0^{\gamma} (\sigma(Z_u^h)-\sigma(Z_u))\mathrm{d}B_u \|_2^2\right) + 3\mathbb{E}\left(\|\tfrac{h}{\gamma} \int_0^{\gamma} b(Z_u^h)\mathrm{d}u\|_2^2\right) \\
&\quad \quad    +3\mathbb{E}\left(\|\sqrt{\tfrac{h}{\gamma}}\int_0^{\gamma} \sigma_0(Z_u^h)\mathrm{d}W_u\|_2^2    \right)                       \\
&\le 3\mathbb{E}\left(\int_{0}^{\gamma} \|\sigma(Z_u^h)-\sigma(Z_u)\|_2^2  \mathrm{d}u\right)+3M^2\gamma^2 \tfrac{h^2}{\gamma^2} \\ & \quad \quad \quad +3\tfrac{h}{\gamma}\mathbb{E}\left(\int_0^{\gamma}\| \sigma_0(Z_u^h)\|_2^2\mathrm{d}u    \right)   \\ 
&\le 3k^2\mathbb{E}\left(\int_0^{\gamma} \|Z_u^h-Z_u\|_2^2\right)+3M^2(h+h^2).
\end{align*}
From this point, Grönwall's inequality gives us that for all $h>0$ we have: $$\mathbb{E}\left(\|Z_{\gamma}^h-Z_{\gamma}\|_2^2\right)\le 3M^2(h+h^2)e^{3k^2\gamma}.$$
Thus, for $f$ a smooth function, that is therefore $L$-Lipschitz on $K$, one has: \begin{align*}
    &\left|e^{h\La_{\frac{\gamma}{h}}}f(x)-e^{\gamma\La^{(1)}}f(x) \right|\\&=\left|\mathbb{E}(f(X_h^{\frac{\gamma}{h}}))-\mathbb{E}(f(Z_{\gamma})) \right|    \\&\le \mathbb{E}(\left|f(Z_{\gamma}^h)-f(Z_{\gamma}) \right|)\\
    &\le L\mathbb{E}\left(\|Z_{\gamma}^h-Z_{\gamma}\|_2 \right) \\
    &\le L\mathbb{E}\left(\|Z_{\gamma}^h-Z_{\gamma}\|_2^2 \right)^{\frac{1}{2}} \ \ \ \ \text{Using Cauchy-Schwarz} \\
    &\le \sqrt{3}LM(h+h^2)^{\frac{1}{2}}e^{\frac{3}{2}k^2\gamma}.
\end{align*}
Since the constants are independent of $x$ and $\gamma$, this proves the theorem for $C_1=\sqrt{3}LM$ and $C_2=\frac{3}{2}k^2$.
\end{proof}
Combining this result with the uniform ergodic theorem (Theorem \ref{ergodique}) gives us this crucial result:
\begin{coroll} \label{corollary}
For any $t>0$, $\eta>0$ and $\mu$ probability measure on $K$, we have that: $$\Py_{\mu}\left(X_t^{\gamma}\in \underset{z\in K_0}{\bigcup} B(z,\eta)\right)\underset{\gamma\to+\infty}{\longrightarrow} 1.$$
uniformly in $\mu$.
\end{coroll}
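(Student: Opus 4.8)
The plan is to dominate the complementary event by the expectation of a smooth cutoff that vanishes on $K_0$, and then to show that this expectation tends to $0$ uniformly in $\mu$ by combining Lemma \ref{approx1} with the uniform ergodic theorem (Theorem \ref{ergodique}); the latter is available because under the geometric hypothesis of Theorem \ref{mtheorem} the functions $H_z$ are affine, hence continuous (Remark \ref{example}). Fix $t>0$ and $\eta>0$. Since the set $\bigcup_{z\in K_0}B(z,\eta)$ only grows with $\eta$, it suffices to treat small $\eta$, say $\eta$ small enough that the balls $B(z,\eta)$, $z\in K_0$, are pairwise disjoint (here the finiteness of $K_0$ is used). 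Choose $\phi\in\mathcal{C}^{\infty}(K)$ with $0\le\phi\le 1$, $\phi\equiv 0$ on $\bigcup_{z\in K_0}B(z,\eta/2)$ and $\phi\equiv 1$ on $K\setminus\bigcup_{z\in K_0}B(z,\eta)$ — for instance $\phi(x)=\chi(\|x-z\|_2^2/\eta^2)$ on each ball $B(z,\eta)$ and $\phi\equiv 1$ outside, where $\chi$ is smooth, $[0,1]$-valued, equal to $0$ on $[0,1/4]$ and to $1$ on $[1,\infty)$. Such $\phi$ is in particular Lipschitz. Since $\one_{\{X_t^\gamma\notin\bigcup_z B(z,\eta)\}}\le\phi(X_t^\gamma)$, we have $\Py_\mu(X_t^\gamma\notin\bigcup_{z\in K_0}B(z,\eta))\le\mathbb{E}_\mu(\phi(X_t^\gamma))=\langle\mu,e^{t\La_\gamma}\phi\rangle$, so it is enough to prove that $\langle\mu,e^{t\La_\gamma}\phi\rangle\to 0$ uniformly in the probability measure $\mu$.

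The mechanism couples three time scales: the fixed macroscopic time $t$, an auxiliary mesoscopic time $h=h(\gamma)\downarrow 0$, and the induced microscopic fast time $\gamma h$. For $0<h<t$, the Markov property gives $\langle\mu,e^{t\La_\gamma}\phi\rangle=\langle\mu^\gamma_{t-h},e^{h\La_\gamma}\phi\rangle$ with $\mu^\gamma_{t-h}:=\mu e^{(t-h)\La_\gamma}$ the law of $X_{t-h}^\gamma$, still a probability measure on $K$. Apply Lemma \ref{approx1} to $\phi$ and to the initial law $\mu^\gamma_{t-h}$, keeping its parameter $h$ and replacing its fast parameter by $\gamma h$ so that $\La_{(\gamma h)/h}=\La_\gamma$ (and taking its free time parameter equal to $1$); since the constants $C_1,C_2$ there are uniform in the initial law, this gives
$$\bigl|\langle\mu,e^{t\La_\gamma}\phi\rangle-\langle\mu^\gamma_{t-h},e^{\gamma h\La^{(1)}}\phi\rangle\bigr|\le C_1(h+h^2)^{1/2}e^{C_2\gamma h}.$$
Because $\phi$ vanishes on $K_0$, the ergodic projector kills it: $\Pa\phi(x)=\sum_{z\in K_0}H_z(x)\phi(z)=0$. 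Hence $\langle\mu^\gamma_{t-h},e^{\gamma h\La^{(1)}}\phi\rangle=\langle\mu^\gamma_{t-h},e^{\gamma h\La^{(1)}}\phi-\Pa\phi\rangle$ has modulus at most $\|e^{\gamma h\La^{(1)}}\phi-\Pa\phi\|_\infty$, and therefore
$$0\le\langle\mu,e^{t\La_\gamma}\phi\rangle\le C_1(h+h^2)^{1/2}e^{C_2\gamma h}+\bigl\|e^{\gamma h\La^{(1)}}\phi-\Pa\phi\bigr\|_\infty\qquad\text{for all }\mu.$$

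It remains to calibrate $h=h(\gamma)$, and this is where the genuine difficulty lies: I need $h\to 0$ with $(h+h^2)^{1/2}e^{C_2\gamma h}\to 0$ (so the error from Lemma \ref{approx1} vanishes) but simultaneously $\gamma h\to\infty$ (so that $\|e^{\gamma h\La^{(1)}}\phi-\Pa\phi\|_\infty\to 0$ by Theorem \ref{ergodique}, $\phi$ being Lipschitz) — Lemma \ref{approx1} is only useful for small $h$ while the ergodic theorem requires the fast clock $\gamma h$ to run out to infinity, and the two requirements compete. A logarithmic window does the job: with $h(\gamma)=\dfrac{\log\gamma}{4C_2\,\gamma}$ one has $\gamma h=\dfrac{\log\gamma}{4C_2}\to\infty$ and $(h+h^2)^{1/2}e^{C_2\gamma h}\le\sqrt{2h}\,\gamma^{1/4}=O(\gamma^{-1/4}\sqrt{\log\gamma})\to 0$, while $h(\gamma)<t$ for $\gamma$ large, which is all that is needed. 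The displayed upper bound then tends to $0$ uniformly in $\mu$, so $\Py_\mu(X_t^\gamma\in\bigcup_{z\in K_0}B(z,\eta))\to 1$ uniformly in $\mu$, as claimed. The main obstacle is exactly this last calibration; the rest is the assembly of Lemma \ref{approx1}, Theorem \ref{ergodique}, and the choice of cutoff.
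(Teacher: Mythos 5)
Your proof is correct and follows essentially the same route as the paper's: split the macroscopic time $t$ into $(t-h)+h$ via the Markov property, use Lemma \ref{approx1} to replace the short-time evolution of $X^\gamma$ by the dominant diffusion run for a long fast time, invoke the uniform ergodic theorem, and calibrate $h(\gamma)$ so that both error terms vanish. The only cosmetic differences are that you work with a cutoff $\phi$ vanishing near $K_0$ (so $\Pa\phi=0$) where the paper uses $f=1-\phi$ (so $\langle\mu,\Pa f\rangle=1$), and you make the competing-scales calibration explicit and transparent with $h(\gamma)=\log\gamma/(4C_2\gamma)$ instead of the paper's implicit parametrization through $\beta(\gamma)$.
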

\begin{proof}~\\
We consider a smooth function $f:K\to[0,1]$ such that: \begin{itemize}
    \item $f=1$ on $\underset{z\in K_0}{\bigcup} B(z,\frac{\eta}{2})\cap K$.
    \item $f=0$ outside of $\underset{z\in K_0}{\bigcup} B(z,\eta)$.
\end{itemize}
Then we have: $$\langle \mu, e^{t\La_{\gamma}} f \rangle\le \Py_{\mu}\left(X_t^{\gamma}\in \underset{z\in K_0}{\bigcup} B(z,\eta)\right), $$ and thus it is sufficient to prove that the term on the left handside of the last display converges to $1$.\\

For a given positive $\gamma$ and a strictly positive $t$, we consider $\beta:=\beta(\gamma,t),h:=h(\gamma,t)$ such that: $$\gamma=C_1\beta^2e^{2C_2\beta t} \ \ \text{ and } \ \ h=\frac{1}{C_1\beta e^{2C_2\beta t}}.$$
Thus one has that $\gamma=\displaystyle\frac{\beta}{h}$, that $\beta\underset{\gamma\to+\infty}{\longrightarrow} +\infty$ and that $h\underset{\gamma\to+\infty}{\longrightarrow} 0$.\\

We recall $\eqref{H_z}$, i.e. that for all $z\in K_0$ and $x\in K$, we have $H_z(x)=\mathbb{P}(X_{\infty}(x)=z)$,  and therefore $\sum_{z\in K_0} \limits H_z=1$. We also recall $\eqref{formule ergodique}$, i.e. that for $x\in K$, we have $\Pa f(x)=\sum_{z\in K_0} \limits f(z)H_z(x)$. Eventually, since $f$ is constant equal to $1$ on $K_0$, we have that:
\begin{align*}
    \langle \mu, \Pa f \rangle&=\sum_{z\in K_0} f(z) \langle\mu, \Pa f \rangle \\
    &=\sum_{z\in K_0} \langle \mu, H_z \rangle \\
    &=\langle\mu,\sum_{z\in K_0} H_z \rangle \\
    &=\langle \mu, 1 \rangle \\
    &=1,
\end{align*}
for any $\mu$ probability measure on $K$. Thus one has $\langle \mu e^{(t-h)\La_{\frac{\beta}{h}}}, \Pa f \rangle=1$ for all $\gamma$ so using Markov property: \begin{align*}
|\langle \mu, e^{t\La_{\gamma}} f \rangle-1|=&\left|\langle \mu, e^{t\La_{\gamma}} f \rangle-\langle \mu e^{(t-h)\La_{\frac{\beta}{h}}}, \Pa f \rangle \right|\\&\le \left|\langle \mu e^{(t-h)\La_\frac{\beta}{h}}, e^{h\La_{\frac{\beta}{h}}} f \rangle-\langle \mu e^{(t-h)\La_\frac{\beta}{h}},e^{\beta\La^{(1)}} f \rangle\right|\\& \ \ \ \ \ \ +\left|\langle \mu e^{(t-h)\La_\frac{\beta}{h}}, e^{\beta\La^{(1)}}f\rangle-\langle \mu e^{(t-h)\La_\frac{\beta}{h}}, \Pa f\rangle\right| \\
\end{align*}
We now denote $\mu_{\gamma}=\mu e^{(t-h)\La_\frac{\beta}{h}}$ that is a probability measure. Then, using Lemma \ref{approx1}, we have that there exists $C_1$ and $C_2$ strictly positive constants independent of $\gamma$ and $\mu$ such that: \begin{align*}
    \left|\langle \mu_{\gamma}, e^{h\La_{\frac{\beta}{h}}} f \rangle-\langle \mu_{\gamma}, e^{\beta\La^{(1)}} f \rangle \right|&\le C_1 (h+h^2)^{\frac{1}{2}} e^{C_2 \beta t} \\
    &\le C_1\sqrt{h}e^{C_2\beta t}+C_1he^{C_2\beta t} \\
    &\le C_1 \sqrt{\frac{1}{C_1\beta e^{2C_2\beta t}}} e^{C_2\beta t}+\frac{1}{\beta}e^{-C_2\beta t}\\
    &\le \sqrt{\frac{C_1}{\beta}}+\frac{1}{\beta} \\
    &\underset{\gamma\to +\infty}{\longrightarrow} 0.
\end{align*}

On the other hand, since the convergence is uniform in $x$ in the uniform ergodic theorem, we have: $$\left|\langle \mu_{\gamma}, e^{\beta\La^{(1)}}f\rangle-\langle \mu_{\gamma}, \Pa f\rangle\right|\le\underset{x\in K}{\sup}\left|e^{\beta \La^{(1)}}f(x)-\Pa f(x)\right|\underset{\gamma\to+\infty}{\longrightarrow} 0,$$
uniformly in $\mu$. So eventually: $$\langle \mu,e^{t\La_{\gamma}} f \rangle\underset{\gamma\to+\infty}{\longrightarrow} 1,$$
and the convergence is uniform in $\mu$. This proves the corollary.
\end{proof}
Finally, to prove our theorem on affine functions for an affine drift, we need a last technical lemma: \begin{lemme}\label{elin}
We assume that $b:x\in \R^n\mapsto C x+d\in \R^n$ where $C\in \mathcal{M}_n(\R)$ and $d\in \R^n$. Then for any affine function $g:x\in \R^n\mapsto v\cdot x+l\in \R$ with $v\in \mathbb{R}^n$ and $l\in \mathbb{R}$, and any $t\ge 0$, one has that $e^{t\La_{\gamma}}g$ is affine. More precisely: \begin{equation}\label{expression}e^{t\La_{\gamma}}g:x\mapsto e^{tC^{\dagger}} v \cdot x+\left(v\cdot \int_0^{t} (e^{Cs}d) \mathrm{d}s +l\right)\end{equation}
\end{lemme}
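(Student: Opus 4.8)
The plan is to verify that the proposed affine function in \eqref{expression} solves the backward Kolmogorov equation $\partial_t u_t = \La_\gamma u_t$ with initial condition $u_0 = g$, and then invoke uniqueness of the semigroup to conclude $e^{t\La_\gamma}g = u_t$. First I would observe that since $g$ is affine, $H_g = 0$, so the second-order part of $\La_\gamma$ annihilates $g$ entirely, and only the drift term contributes: $\La_\gamma g = \langle b, \nabla_x\rangle g = v\cdot b(x) = v\cdot(Cx+d) = (C^\dagger v)\cdot x + v\cdot d$. The key structural point is that this is again an affine function, so the space of affine functions is stable under $\La_\gamma$; iterating, $e^{t\La_\gamma}g$ should remain affine for all $t$, which justifies looking for a solution of the form $u_t : x \mapsto w(t)\cdot x + \ell(t)$.

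Next I would plug the ansatz $u_t(x) = w(t)\cdot x + \ell(t)$ into $\partial_t u_t = \La_\gamma u_t$. Since $u_t$ is affine its Hessian vanishes, and the computation above gives $\La_\gamma u_t : x \mapsto (C^\dagger w(t))\cdot x + w(t)\cdot d$. Matching the $x$-dependent and constant parts yields the linear ODE system
\begin{equation*}
\dot w(t) = C^\dagger w(t), \qquad \dot\ell(t) = w(t)\cdot d,
\end{equation*}
with initial conditions $w(0) = v$, $\ell(0) = l$. Solving gives $w(t) = e^{tC^\dagger}v$ and then $\ell(t) = l + \int_0^t w(s)\cdot d\,\mathrm ds = l + \int_0^t (e^{sC^\dagger}v)\cdot d\,\mathrm ds = l + v\cdot\int_0^t (e^{Cs}d)\,\mathrm ds$, using that $(e^{sC^\dagger}v)\cdot d = v\cdot(e^{sC}d)$. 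This is exactly \eqref{expression}.

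Finally I would close the argument by uniqueness. The function $(t,x)\mapsto u_t(x)$ constructed above is smooth, polynomially bounded (affine) together with its derivatives, and solves the Cauchy problem for $\La_\gamma$; by the standard uniqueness theorem for the Kolmogorov backward equation associated to the SDE \eqref{EDS} (with Lipschitz, bounded coefficients on the compact $K$), it must coincide with $e^{t\La_\gamma}g$. Alternatively, and perhaps more cleanly given the compact state space, one verifies directly via Itô's formula that $M_s := u_{t-s}(X_s^\gamma)$ is a martingale on $[0,t]$ — its drift is $(-\partial_t u_{t-s} + \La_\gamma u_{t-s})(X_s^\gamma) = 0$ — and comparing $\mathbb E_x[M_0]$ with $\mathbb E_x[M_t]$ gives $u_t(x) = \mathbb E_x[g(X_t^\gamma)] = e^{t\La_\gamma}g(x)$. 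I do not expect any real obstacle here: the only mild subtlety is making the uniqueness/martingale step rigorous, which is routine since the coefficients are Lipschitz and bounded on $K$ and the test function $u_t$ is affine hence has bounded derivatives on $K$.
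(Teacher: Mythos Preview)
Your argument is correct. You solve the backward Kolmogorov equation by making an affine ansatz, deriving the pair of ODEs $\dot w = C^{\dagger} w$, $\dot\ell = w\cdot d$, and then closing with a uniqueness/martingale step. The paper takes the dual (forward) route: it writes $e^{t\La_\gamma}g(x)=v\cdot\mathbb{E}_x(X_t^{\gamma})+l$ directly from the definition of the semigroup, applies $\mathbb{E}_x$ to the integral form of the SDE to obtain the vector ODE $\tfrac{d}{dt}\mathbb{E}_x(X_t^{\gamma})=C\,\mathbb{E}_x(X_t^{\gamma})+d$, and solves it. The two computations are transposes of one another (your $\dot w=C^{\dagger}w$ versus the paper's $\dot m=Cm$), but the paper's version is marginally shorter because no uniqueness argument is needed: once $\mathbb{E}_x(X_t^{\gamma})$ is known, the identity $e^{t\La_\gamma}g(x)=\mathbb{E}_x(g(X_t^{\gamma}))$ is the definition. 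Your approach has the minor advantage of making explicit, up front, why the space of affine functions is $\La_\gamma$-invariant, which is the conceptual point used later in the proof of Theorem~\ref{mtheorem}.
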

\begin{proof} ~\\

We have that for all $x\in K$:
\begin{align*}
    e^{t\La_{\gamma}}g(x)&=\mathbb{E}_x\left(g(X_t^{\gamma})\right) \\
    &=\mathbb{E}_x\left(v\cdot X_t^{\gamma}+l \right) \\
    &=v \cdot \mathbb{E}_x(X_t^{\gamma})+l \numberthis \label{inter}.
\end{align*}
Applying $\mathbb{E}_x$ to the integral formulation of \eqref{EDS}, we get:
\begin{align*}
    \mathbb{E}_x(X_t^{\gamma})&=x+\int_0^t \mathbb{E}_x(b(X_t^{\gamma}))\mathrm{d}s \\
    &=x+\int_0^t \mathbb{E}(C X_t^{\gamma}+d)\mathrm{d}s\\
    &=x+dt+C\int_{0}^t \mathbb{E}_x(X_s^{\gamma})\mathrm{d}s.
\end{align*}
The associated differential equation is: \begin{equation}\label{edo}
    \frac{\partial}{\partial t} \mathbb{E}_x(X_t^{\gamma})=C\mathbb{E}_x(X_s^{\gamma})+d, \quad \mathbb{E}_x(X_0^{\gamma})=x.
\end{equation}
We can easily check that the unique solution to \eqref{edo} is: \begin{equation}\label{solution edo}
    \mathbb{E}_x(X_t^{\gamma})=e^{Ct}x+\int_0^{t} (e^{Cs}d) \mathrm{d}s
\end{equation}
Combining \eqref{inter} and \eqref{solution edo}, we eventually get:

\begin{align*}
    e^{t\La_{\gamma}}g(x)&=v\cdot\left(e^{tC} x \right)+v\cdot \int_0^{t} (e^{Cs}d) \mathrm{d}s +l 
\end{align*}
\end{proof}
\begin{remark}
\label{stabilite} 
 Let us consider an affine function $g$, equation \eqref{formule ergodique} gives us trivially $\Pa g=g$, that is to say $\Pa$ preserves affine functions.
\end{remark}

We may now prove our main theorem by following the path we mentioned earlier: 
\begin{proof} (Theorem \ref{mtheorem})\\
We have that $b$ is Lipschitz. Since the cardinal of $K_0$ is by hypothesis $n+1$ in a $n$-dimensional vector space, there exists a matrix $C=(c_{i,j})_{1\le i,j\le n}$ and a vector $d$ such that the affine maping: $$\widetilde{b}:x\in \R^n\mapsto Cx+d\in \R^n,$$
satisfies $\widetilde{b}(z)=b(z)$ for all $z\in K_0$. \\ 

We will now consider the system of equations: $$\left\{ \begin{array}{ll}
     X_t^{\gamma}&=Z_0+\displaystyle\int_{0}^t b(X_t^{\gamma})\mathrm{d}t+\int_0^t \sigma_0(X_t^{\gamma})\mathrm{d}B_t+\sqrt{\gamma}\int_0^t \sigma(X_t^{\gamma})\mathrm{d}W_t\\
     Y_t^{\gamma}&=Z_0+\displaystyle\int_{0}^t\widetilde{b}(Y_t^{\gamma})\mathrm{d}t+\int_0^t \sigma_0(Y_t^{\gamma})\mathrm{d}B_t+\sqrt{\gamma}\int_{0}^t\sigma(Y_t^{\gamma})\mathrm{d}W_t
\end{array}\right.$$
where the two independent Brownian motions $B_t$ and $W_t$ are the same for the two processes and $Z_0$ is a random variable of law $\mu$. By hypothesis, $X_t^{\gamma}$ stays in $K$ for arbitrary times, so Remark \ref{r2} gives us that $b(z)$ points to the interior of $K$ for all $z\in K_0$. But $K$ is a simplex, so each side of it is the convex envelope of its vertices, and the value of $\widetilde{b}$ on a point $x$ of this side is a convex combination of the values of $b$ on its vertices: $\widetilde{b}(x)$ therefore also points to the interior of $K$. Thus, Remark \ref{r2} gives us that $Y_t^{\gamma}$ stays in $K$ for arbitrary times. \\

We will denote the generators of these two processes by $\La_{\gamma}=\La^{(0)}+\gamma \La^{(1)}$ and $\mathcal{G}_{\gamma}=\mathcal{G}^{(0)}+\gamma \mathcal{G}^{(1)}$.\\

Let us now consider an affine function $g$. For any $\gamma>0$ and $t>0$ we have, using Lemma \ref{elin}, that $e^{t\mathcal{G}_{\gamma}}g$ is affine since $\widetilde{b}$ is affine. Hence $\mathcal{G}_{\gamma} e^{t\mathcal{G}_{\gamma}}g=\mathcal{G}^{(0)}e^{t\mathcal{G}_{\gamma}}g$ and: \begin{align*}
    \frac{\partial}{\partial t} e^{t\mathcal{G}_{\gamma}} g=\mathcal{G}_{\gamma} e^{t\mathcal{G}_{\gamma}} g  =\mathcal{G}^{(0)} e^{t\mathcal{G}_{\gamma}} g =\Pa \mathcal{G}^{(0)} \Pa e^{t\mathcal{G}_{\gamma}} g
\end{align*}
since $\Pa$ preserves linear functions (see Remark \ref{stabilite}).\\

Thus, writing $\mathcal{G}_{\infty}=\Pa\mathcal{G}^{(0)} \Pa$, we have that: $$e^{t\mathcal{G}_{\gamma}} g=e^{t\mathcal{G}_{\infty}} g=\Pa e^{t\mathcal{G}_{\infty}} g.$$

We denote $g_t=e^{t\mathcal{G}_{\infty}}g$ for $t\ge 0$. We have for $z\in K_0$ that: \begin{align*}\frac{\partial}{\partial t} \overline{g_t}(z)&=\frac{\partial}{\partial t} g_t(z)\\ 
&=\Pa \mathcal{G}^{(0)} \Pa g_t(z) \\
&=\sum_{y\in K_0} H_z(y) \mathcal{G}^{(0)} \Pa g_t(y) \quad \quad \text{Using \eqref{formule ergodique}} \\
&=\mathcal{G}^{(0)} \Pa g_t(z) \\
&=\widetilde{b}(z) \cdot \nabla_x \Pa g_t(z)\\
&=\widetilde{b}(z) \cdot \nabla_x \left(\sum_{y\in K_0} H_y(x) g_t(y) \right)(z) \\
&=\sum_{y\in K_0} \left( \widetilde{b}(z)\cdot \nabla_x H_y(z) \right) g_t(y) \\
&=\left(\widetilde{b}(x)\cdot \nabla_x H_y(x)\right)_{x,y\in K_0} (\overline{g_t}(y))_{y\in K_0} (z) \\
&=\overline{\mathcal{G}} \overline{g_t}(z).
\end{align*}
Eventually we have that $\overline{g_t}(z)=e^{t\overline{\mathcal{G}}}\overline{g}(z)$ for all $z\in K_0$, where we remind that $\overline{g}=g_{|K_0}$.\\

Finally we have for all probability measure $\mu$: \begin{align*}
    \langle \mu, e^{t\mathcal{G}_{\gamma}}g\rangle=\langle \mu \Pa, e^{t\mathcal{G}_{\infty}}g\rangle =\langle \overline{\mu}, e^{t\overline{\mathcal{G}}}\overline{g}\rangle
\end{align*}

However, if we look at the definition of $\overline{\La}$ and $\overline{\mathcal{G}}$ in Theorem \ref{mtheorem}, we notice that only the value of the drift on the $z\in K_0$ matters, therefore we have actually that $\overline{\La}=\overline{\mathcal{G}}$ so: $$\langle \overline{\mu}, e^{t\overline{\La}} \overline{g} \rangle=\langle \overline{\mu}, e^{t\overline{\mathcal{G}}} \overline{g} \rangle.$$

Thus, to prove that, for any affine function $g$, we have: $$\langle \mu, e^{t\La_{\gamma}} g \rangle\underset{\gamma\to+\infty}{\longrightarrow}\langle \overline{\mu}, e^{t\overline{\La}} \overline{g} \rangle,$$
we have to show that: $$\left|\langle \mu, e^{t\La_{\gamma}} g \rangle- \langle \mu, e^{t\mathcal{G}_{\gamma}} g \rangle\right|=\left|\langle \mu, e^{t\La_{\gamma}} g \rangle-\langle \overline{\mu}, e^{t\widetilde{\mathcal{G}}} \overline{g} \rangle\right|\underset{\gamma\to+\infty}{\longrightarrow} 0.$$ 

First of all, let us use Dynkin's formula for $g:x\mapsto e_i\cdot x$. We have thus $\nabla_x g=e_i$ and $H_g=0$. Therefore, writing $X_t^{\gamma}=\left([X_{t}^{\gamma}]_i\right)_{1\le i\le n}$ and $Y_t^{\gamma}=\left([Y_{t}^{\gamma}]_j\right)_{1\le j\le n}$ we have:\begin{align*}
    &\mathbb{E}_{\mu}\left([X_{t}^{\gamma}]_i-[Y_{t}^{\gamma}]_i \right)\\&=\mathbb{E}_{\mu}\left(g(X_t^{\gamma})\right)-\mathbb{E}_{\mu}\left(g(Y_t^{\gamma})\right)                 \\
    &=\langle\mu,g \rangle -\langle\mu,g\rangle \\& \quad   +\mathbb{E}\left( \int_{0}^t \left(b(X_s^{\gamma})\cdot\nabla_x g(X_s^{\gamma}) -\widetilde{b}(Y_s^{\gamma})\cdot\nabla_x g(Y_s^{\gamma})\right)\mathrm{d}s\right) \\& \quad \quad+  \frac{1}{2}\mathbb{E}\left(\int_0^{t}(\textbf{tr}\left(\sigma(X_s^{\gamma})\sigma(X_s^{\gamma})^{\dagger}H_g(X_s^{\gamma})^{\dagger}  \right)\right. \\ &\quad \quad \quad \quad \quad  \quad \quad \left.-\textbf{tr}\left(\sigma(Y_s^{\gamma})\sigma(Y_s^{\gamma})^{\dagger})H_g(Y_s^{\gamma})^{\dagger}  \right) \mathrm{d}s\right) \\
     & \quad \quad \quad
     +\frac{1}{2}\mathbb{E}\left(\int_0^{t}(\textbf{tr}\left(\sigma_0(X_s^{\gamma})\sigma_0(X_s^{\gamma})^{\dagger}H_g(X_s^{\gamma})^{\dagger}  \right) \right. \\ & \quad \quad \quad \quad \quad  \quad \quad \quad\left. -\textbf{tr}\left(\sigma_0(Y_s^{\gamma})\sigma_0(Y_s^{\gamma})^{\dagger})H_g(Y_s^{\gamma})^{\dagger}  \right) \mathrm{d}s\right) \\
    &=\mathbb{E}_{\mu}\left(\int_{0}^t (b(X_s^{\gamma})-\widetilde{b}(Y_s^{\gamma})) \cdot e_i  \mathrm{d}s\right) \\
    &=\mathbb{E}_{\mu}\left(\int_{0}^t (\widetilde{b}(X_s^{\gamma})-\widetilde{b}(Y_s^{\gamma})) \cdot e_i  \mathrm{d}s\right)+\mathbb{E}_{\mu}\left(\int_{0}^t (b(X_s^{\gamma})-\widetilde{b}(X_s^{\gamma})) \cdot e_i  \mathrm{d}s\right) \\
    &=\mathbb{E}_{\mu}\left(\int_{0}^t \sum_{j=1}^n c_{i,j}([X_{t}^{\gamma}]_j-[Y_{t}^{\gamma}]_j)  \mathrm{d}s\right) \\ &\quad \quad \quad+\mathbb{E}_{\mu}\left(\int_{0}^t (b(X_s^{\gamma})-\widetilde{b}(X_s^{\gamma})) \cdot e_i  \mathrm{d}s\right) \\
    &=\sum_{j=1}^n c_{i,j}\int_{0}^t \mathbb{E}_{\mu}\left([X_{t}^{\gamma}]_j-[Y_{t}^{\gamma}]_j \right)\mathrm{d}s +\int_{0}^t \mathbb{E}_{\mu}\left((b(X_s^{\gamma})-\widetilde{b}(X_s^{\gamma})) \cdot e_i  \right)\mathrm{d}s.
\end{align*}
Thus we have:
\begin{align*}
    &\left|\mathbb{E}_{\mu}\left([X_{t}^{\gamma}]_i-[Y_{t}^{\gamma}]_i\right) \right|\\&\le \sum_{j=1}^n |c_{i,j}|\int_0^t \left|\mathbb{E}_{\mu}([X_{s}^{\gamma}]_j-[Y_{s}^{\gamma}]_j) \right|\mathrm{d}s+\int_0^t\left|\mathbb{E}_{\mu}((b(X_{s}^{\gamma})-\widetilde{b}(X_s^{\gamma}))\cdot e_i)\right| \\
    &\le \sum_{j=1}^n |c_{i,j}|\int_0^t \underset{1\le k\le n}{\sup}\left|\mathbb{E}_{\mu}([X_{s}^{\gamma}]_k-[Y_{s}^{\gamma}]_k) \right|\mathrm{d}s& \\ &\quad \quad \quad+\underset{1\le k \le n}{\sup}\int_0^t\left|\mathbb{E}_{\mu}((b(X_{s}^{\gamma})-\widetilde{b}(X_s^{\gamma}))\cdot e_k)\right| \\
    &\le \sum_{l,j=1}^n |c_{i,j}|\int_0^t \underset{1\le k\le n}{\sup}\left|\mathbb{E}_{\mu}([X_{s}^{\gamma}]_k-[Y_{s}^{\gamma}]_k) \right|\mathrm{d}s \\ & \quad \quad \quad+\underset{1\le k \le n}{\sup}\int_0^t\left|\mathbb{E}_{\mu}((b(X_{s}^{\gamma})-\widetilde{b}(X_s^{\gamma}))\cdot e_k)\right|
\end{align*}
We notice that $i$ does not appear in the last expression, thus we have: \begin{align*}\underset{1\le k\le n}{\sup}\left|\mathbb{E}_{\mu}\left([X_{t}^{\gamma}]_k-[Y_{t}^{\gamma}]_k\right) \right|&\le\|C\|_1\int_0^t \underset{1\le k\le n}{\sup}\left|\mathbb{E}_{\mu}([X_{s}^{\gamma}]_k-[Y_{s}^{\gamma}]_k) \right|\mathrm{d}s\\ & \quad \quad+\underset{1\le k \le n}{\sup}\int_0^t\left|\mathbb{E}_{\mu}((b(X_{s}^{\gamma})-\widetilde{b}(X_s^{\gamma}))\cdot e_k)\right|.\end{align*}
We notice that for $t\in [0,T]$, we have $$\underset{1\le k \le n}{\sup}\int_0^t\left|\mathbb{E}_{\mu}((b(X_{s}^{\gamma})-\widetilde{b}(X_s^{\gamma}))\cdot e_k)\right|\le C_T^{\gamma}$$
where \begin{equation}\label{C_T}C_T^{\gamma}=\underset{1\le k \le n}{\sup}\int_0^T\left|\mathbb{E}_{\mu}\left((b(X_{s}^{\gamma})-\widetilde{b}(X_s^{\gamma}))\cdot e_k\right)\right|.\end{equation}

Finally, writing $g_{\gamma}:t\in \R_+\mapsto \underset{1\le k\le n}{\sup}\left|\mathbb{E}_{\mu}(X_{t,k}^{\gamma}-Y_{t,k})\right|\in \R_+$, we have, for $t\in[0,T]$, that: $$g_{\gamma}(t)\le \|C\|_1 \int_{0}^{t} g_{\gamma}(s) \mathrm{d}s + C_T^{\gamma},$$
so Grönwall's lemma gives that for $t\in [0,T]$: \begin{equation}\label{minoration}0\le g_{\gamma}(t)\le  C_T^{\gamma} e^{\|C\|_1 t}.\end{equation}
But Corollary 1 gives that for any $s>0$, we have that $X_s^{\gamma}$ lives around points of $K_0$ when $\gamma$ is big enough, and we have defined $\widetilde{b}$ in such a way that $\widetilde{b}_{|K_0}=b_{|K_0}$. Eventually we have for all $s>0$: $$\left|\mathbb{E}_{\mu}\left((b(X_{s}^{\gamma})-\widetilde{b}(X_s^{\gamma}))\cdot e_k\right)\right|\underset{\gamma\to+\infty}{\longrightarrow}0.$$ This quantity is bounded by a constant (that is integrable on $[0,T]$), by definition of $C_T^{\gamma}$ in \eqref{C_T} the dominated convergence theorem gives us eventually:
$$C_T^{\gamma}\underset{\gamma\to+\infty}{\longrightarrow} 0.$$ 

Therefore, by \eqref{minoration} we finally have:
$$g_{\gamma}(t)\underset{\gamma\to+\infty}{\longrightarrow} 0.$$

Thus, for $g:x\in \R^n\mapsto v\cdot x+l\in \R$, where $v=(v_i)_{1\le i\le n}\in \R^n$ and $e\in \R$, one has: \begin{align*}
    \left|\langle \mu, e^{t\La_{\gamma}}g \rangle- \langle \mu, e^{t\mathcal{G}_{\gamma}}g \rangle\right|&=\left|\mathbb{E}_{\mu}(g(X_t^{\gamma}))-\mathbb{E}_{\mu}(g(X_t^{\gamma})) \right| \\
    &=\left|\mathbb{E}(X_t^{\gamma}-Y_t^{\gamma})\cdot v\right|\\
    &=\left|\sum_{1\le i \le n} v_i \mathbb{E}([X_{t}^{\gamma}]_i-[Y_{t}^{\gamma}]_i)\right|\\
    &=\underset{1\le i \le n}{\sup}\left|\mathbb{E}([X_{t}^{\gamma}]_i-[Y_{t}^{\gamma}]_i) \right|\times \sum_{1\le j \le n} |v_j| \\
    &=g_{\gamma}(t) \|v\|_1 \\
    &\underset{\gamma\to+\infty}{\longrightarrow} 0 \numberthis \label{approxlin}
\end{align*}

Furthermore, since the result of Corollary 1 is uniform in $\mu$, this convergence result is uniform in $\mu$ for $g$ affine and $t>0$ given. \\

Eventually, we have that for any affine function $g$: \begin{align*}
    \langle \mu,e^{t\La_{\gamma}}g \rangle&\underset{\gamma\to+\infty}{\longrightarrow} \langle \overline{\mu}, e^{t\overline{\La}} \overline{g}\rangle \\
    &=\sum_{z\in K_0}\sum_{y\in K_0} \overline{\mu}(y) e^{t\overline{\La}}(y,z) \overline{g}(z).
\end{align*}

We will now use the geometrical hypothesis of the theorem to conclude the proof for a Lipschitz test function $f$. Since we have $n+1$ independent points in a $n$-dimensional vector space, there exists an affine function $g$ such that $f_{|K_0}=g_{|K_0}$. \\

We thus have $\overline{f}=\overline{g}$, so in particular we have proved that: $$\langle \mu, e^{t\La_{\gamma}} g \rangle\underset{\gamma\to+\infty}{\longrightarrow}\langle \overline{\mu}, e^{t\overline{\La}} \overline{g} \rangle=\langle \overline{\mu}, e^{t\overline{\La}} \overline{f} \rangle.$$

Therefore if we prove that: $$\left|\langle \mu, e^{t\La_{\gamma}} f \rangle-\langle \mu, e^{t\La_{\gamma}} g \rangle \right| \underset{\gamma\to+\infty}{\longrightarrow} 0,$$
we will be done with the theorem.\\

Let $\varepsilon>0$, we consider $\eta>0$ sufficiently small in order that the balls $B(z,\eta)$ for $z\in K_0$ are disjoint and such that: $$\underset{z\in K_0}{\sup}\underset{x\in B(z,\eta)}{\sup} |f(x)-g(x)|\le \varepsilon.$$

On the other hand, Corollary 1 gives that for $\gamma$ big enough $$\Py_{\mu}\left(X_t^{\gamma}\notin \underset{z\in K_0}{\bigcup} B(z,\eta) \right)\le \varepsilon.$$

Eventually, we have for $\gamma$ big enough that:
\begin{align*}
    \left|\langle \mu, e^{t\La_{\gamma}} f \rangle-\langle \mu, e^{t\La_{\gamma}} g \rangle \right|&=\left|\mathbb{E}_{\mu}((f-g)(X_t^{\gamma}))\right| \\
    &\le (\|f\|_{\infty}+\|g\|_{\infty})\Py_{\mu}\left(X_t^{\gamma}\notin \underset{z\in K_0}{\bigcup} B(z,\eta) \right)\\ &+\sum_{z\in K_0} \Py_{\mu}\left(X_t^{\gamma}\in B(z,\eta) \right)\underset{x\in B(z,\eta)}{\sup} |f(x)-g(x)|\\
    &\le (\|f\|_{\infty}+\|g\|_{\infty})\varepsilon+\varepsilon.
\end{align*}
This proves the theorem.
\end{proof}
\subsection{Proof of Theorem 3}
Now that our Theorem \ref{mtheorem} is proven, let us give a sketch of the proof of Theorem \ref{gtheorem}.\\

We know that for all $\gamma>0$ our trajectory $(X_t^{\gamma})_{t\ge 0}$ are elements of $\mathbb{D}$, so \cite{MZ84} gives us a powerful criterion to show that this family of processes is tight in $\mathbb{L}^0(\R,K)$. Thus, Prokhorov's theorem gives us that the set of the laws of theses processes is a relatively compact subset of the space of probability measures on $\mathbb{L}^0(\R,K)$ for the topology of weak convergence. \\

To prove Theorem \ref{gtheorem}, we therefore just have to show that if a subsequence of $\left((X_t^{\gamma})_{t\ge 0}\right)_{\gamma>0}$ weakly converges to $\beta\in \mathbb{L}^0$, then its limit is necessarily $(\overline{X}_t)_{t\ge 0}$. It is in fact enough to prove that for any $r\in \N^*$ and for almost any sequence $0 \le t_1\le \ldots\le t_r<+\infty$ one has that the laws of $(\overline{X}_{t_1},\ldots,\overline{X}_{t_r})$ and $(\beta_{t_1},\ldots,\beta_{t_r})$ are the same.\\

Let us start with the following result:

\begin{theorem}\label{critere}\cite[Theorem 4]{MZ84}
Let $E$ be an Euclidean space. If $X$ is an $E$-valued stochastic process with natural filtration $(\mathcal{F}_t, t\ge 0)$, then for any $\tau\in \R_+$, its conditional variation on $[0,\tau]$ is defined as: $$V_{\tau}(X):=\underset{0=t_0<t_1<\ldots<t_k=r}{\sup}\sum_{i=0}^{k-1} \mathbb{E}\left( \|\mathbb{E}(X_{t_{i+1}}-X_{t_i}|\mathcal{F}_{t_i})\|\right)$$
Consider an index set $I$ and a family $(X^{\gamma},\gamma>0)$ of processes living in $\mathbb{D}(\R_+,E)$ which satisfy:
$$\underset{\gamma>0}{\sup}\left| V_{\tau}(X^{\gamma})+\mathbb{E}\left(\underset{0\le t\le \tau}{\sup} X_t^{\gamma}\right) \right|<+\infty,$$
for all $\tau>0$. Then the family of laws of the $X^{\gamma}$ is tight for the Meyer--Zheng topology and all the limiting points are supported in $\mathbb{D}(\R_+,E)$.
\end{theorem}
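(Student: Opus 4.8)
The statement is the Meyer--Zheng tightness criterion, imported essentially verbatim from \cite[Theorem 4]{MZ84}, so in the paper the ``proof'' is really just a pointer to that reference; what follows is a sketch of how I would reconstruct the argument. The plan is, first, to recognise that tightness in the Meyer--Zheng topology is almost automatic. One identifies a Borel function $w\in\mathbb L^0(\R_+,E)$ with its \emph{pseudo-path}, the image of $\lambda(\mathrm dt)=e^{-t}\,\mathrm dt$ under $t\mapsto(t,w_t)$, which is an element of the space of probability measures on the compact set $[0,\infty]\times\widehat E$ (with $\widehat E=E\cup\{\infty\}$ the one-point compactification) whose first marginal is $\lambda$. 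That space is weak-$*$ closed, hence compact and metrizable, and the metric $d$ defined above is equivalent to the weak-convergence metric restricted to the genuine pseudo-paths. Consequently \emph{every} family of laws of $\mathbb L^0$-valued random variables is relatively compact, and the real content of the theorem is to identify which pseudo-paths may occur in a limit. The bound $\sup_\gamma\mathbb E[\sup_{0\le t\le\tau}\|X^\gamma_t\|]<\infty$ intervenes here: by Markov's inequality it prevents a limiting pseudo-path from putting mass on $\{\infty\}\subset\widehat E$, so that the limit is $\lambda$-a.e.\ a genuine $E$-valued function.

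The substantive step is to use the bound on $V_\tau$ to show that these limiting functions are c\`adl\`ag, i.e.\ that the limiting pseudo-paths are supported in $\mathbb D(\R_+,E)$. I would argue through quasimartingale structure. On each $[0,\tau]$, finiteness of $V_\tau(X^\gamma)$ together with $X^\gamma_t\in L^1$ makes $X^\gamma$ a quasimartingale; Rao's decomposition then writes each coordinate as a difference $Y^\gamma-Z^\gamma$ of non-negative supermartingales whose initial expectations are bounded, uniformly in $\gamma$, by a constant times $V_\tau(X^\gamma)+\mathbb E\|X^\gamma_\tau\|$. Doob's up- and down-crossing inequalities for non-negative supermartingales, together with the uniform $\sup$-bound, then produce a uniform-in-$\gamma$ bound on the expected number of $\varepsilon$-oscillations of $X^\gamma$ over $[0,\tau]$, for every $\varepsilon>0$ and $\tau>0$. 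One then passes this estimate to the limit: the ($\lambda$-essential) number of $\varepsilon$-oscillations is lower semicontinuous on pseudo-path space, so along a convergent subsequence $X^\gamma\Rightarrow w$ the limit $w$ has, almost surely, only finitely many $\varepsilon$-oscillations on every compact, for every $\varepsilon$; a function with that property has one-sided limits everywhere, hence coincides $\lambda$-a.e.\ with a c\`adl\`ag function. This is exactly the assertion of the theorem, and the same estimates in fact identify the limit as an adapted quasimartingale for the filtration it generates.

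The main obstacle, and the technical core of \cite{MZ84}, will be precisely this last passage to the limit. Because the Meyer--Zheng topology only controls a trajectory up to a $\lambda$-null set of times, the naive ``number of oscillations'' is not even well defined on $\mathbb L^0$ and has to be replaced by a regularised essential version, whose lower semicontinuity for $d$-convergence must then be established --- typically by extracting from a $d$-convergent sequence a further subsequence converging pointwise $\lambda$-a.e.\ and comparing oscillation counts along it. One must also check that the Rao decomposition and the crossing estimates are stable as $\gamma\to\infty$, i.e.\ that limit points remain adapted quasimartingales for the limiting filtration. Everything else --- the compactness of pseudo-path space, the role of the $\sup$-bound in excluding escape to infinity, the Rao decomposition itself --- is soft or classical.
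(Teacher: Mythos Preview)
Your reading is correct: the paper does not prove this statement at all. Theorem~\ref{critere} is simply quoted from \cite[Theorem~4]{MZ84} and used as a black box in the proof of the subsequent tightness lemma; there is no argument in the paper to compare your sketch against. Your reconstruction of the Meyer--Zheng proof via pseudo-paths, Rao decomposition, and crossing estimates is a faithful outline of how the original reference establishes the result, and your identification of the passage to the limit for the essential oscillation count as the technical crux is accurate.
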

The previous theorem is the main tool for proving:
\begin{lemme}
The family of processes: $$(X_t^{\gamma})_{t\ge 0}, \gamma>0$$
is tight in the Polish space $(\mathbb{L}^0(\R_+,K),d)$ and all limiting points are supported on càdlàg paths.
\end{lemme}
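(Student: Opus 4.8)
The plan is to apply the Meyer--Zheng tightness criterion, Theorem \ref{critere}, with $E=\R^n$ (equipped with $\|\cdot\|_2$) and index set $I=(0,+\infty)$. Two bounds, uniform in $\gamma$, have to be established for every $\tau>0$: a bound on $\mathbb{E}\big(\sup_{0\le t\le\tau}\|X_t^{\gamma}\|_2\big)$, and a bound on the conditional variation $V_{\tau}(X^{\gamma})$. Once both are in place, Theorem \ref{critere} immediately yields tightness in $(\mathbb{L}^0(\R_+,K),d)$ together with the statement that all limiting points are supported on càdlàg paths.

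The first bound is trivial. Since $X_t^{\gamma}(x)\in K$ for all $t\ge 0$, all $\gamma>0$, and $K$ is compact, we have $\|X_t^{\gamma}\|_2\le R:=\sup_{y\in K}\|y\|_2<+\infty$, so that $\mathbb{E}\big(\sup_{0\le t\le\tau}\|X_t^{\gamma}\|_2\big)\le R$ for every $\tau>0$ and every $\gamma>0$.

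The essential point is the bound on $V_{\tau}(X^{\gamma})$, and this is exactly where the hypothesis that the dominant generator $\La^{(1)}$ carries no drift term (Remark \ref{r1}) comes in. In the integral form of \eqref{EDS}, the two stochastic integrals $\sqrt{\gamma}\int_0^{\cdot}\sigma(X_s^{\gamma})\,\mathrm{d}W_s$ and $\int_0^{\cdot}\sigma_0(X_s^{\gamma})\,\mathrm{d}B_s$ are martingales with respect to the natural filtration $(\mathcal{F}_t)_{t\ge0}$, so for $t_i\le t_{i+1}$ only the drift survives in the conditional expectation of the increment:
$$\mathbb{E}\big(X_{t_{i+1}}^{\gamma}-X_{t_i}^{\gamma}\,\big|\,\mathcal{F}_{t_i}\big)=\mathbb{E}\Big(\int_{t_i}^{t_{i+1}}b(X_s^{\gamma})\,\mathrm{d}s\ \Big|\ \mathcal{F}_{t_i}\Big).$$
Since $b$ is bounded on $K$ (Remark \ref{compacite}), say $\|b(y)\|_2\le M$ for all $y\in K$, Jensen's inequality for conditional expectations together with the triangle inequality gives $\big\|\mathbb{E}(X_{t_{i+1}}^{\gamma}-X_{t_i}^{\gamma}\,|\,\mathcal{F}_{t_i})\big\|_2\le M\,(t_{i+1}-t_i)$. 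Summing over an arbitrary partition $0=t_0<t_1<\cdots<t_k=\tau$ gives $\sum_{i=0}^{k-1}\mathbb{E}\big(\|\mathbb{E}(X_{t_{i+1}}^{\gamma}-X_{t_i}^{\gamma}\,|\,\mathcal{F}_{t_i})\|_2\big)\le M\tau$, and taking the supremum over all partitions yields $V_{\tau}(X^{\gamma})\le M\tau$, uniformly in $\gamma>0$.

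Both hypotheses of Theorem \ref{critere} being verified, the family $\{(X_t^{\gamma})_{t\ge0}:\gamma>0\}$ is tight in $(\mathbb{L}^0(\R_+,K),d)$ and every limiting point is supported on càdlàg paths. I do not expect any genuine analytic obstacle in this lemma; the only subtlety worth stressing is that the crude estimate on the increments of $X^{\gamma}$ coming from the quadratic variation of the $\sqrt{\gamma}\,\sigma\,\mathrm{d}W$ term blows up like $\gamma$, and it is precisely because $\La^{(1)}$ is pure noise that this diverging contribution is a martingale and therefore drops out of $\mathbb{E}(\,\cdot\,|\,\mathcal{F}_{t_i})$, leaving the conditional variation controlled by the drift alone and hence uniformly bounded in $\gamma$.
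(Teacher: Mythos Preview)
Your proof is correct and follows essentially the same approach as the paper: apply Theorem \ref{critere}, note that compactness of $K$ trivially handles the supremum term, and observe that the martingale parts of \eqref{EDS} vanish in the conditional expectation so that $V_\tau(X^\gamma)$ is controlled by the bounded drift alone, uniformly in $\gamma$. The only cosmetic difference is that the paper invokes the equivalent integral formulation of $V_\tau$ from \cite[Eq.~(4),(5)]{MZ84} (supremum over simple predictable processes) and then applies Cauchy--Schwarz, whereas you work directly with the partition definition stated in Theorem \ref{critere}; both routes yield a bound of order $M\tau$.
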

\begin{proof}~\\
Since for any $\gamma>0$ and any $t\ge 0$, one has $X_t^{\gamma}\in K$ almost surely where $K$ is compact, we only have to prove that the conditional variation in Meyer--Zheng's Theorem \ref{critere} is bounded on segments. For fixed $\tau>0$, we have in our case:
$$V_{\tau}(X^{\gamma}):=\underset{0=t_0<t_1<\ldots<t_k=r}{\sup}\sum_{i=0}^{k-1} \mathbb{E}\left( \|\mathbb{E}(X_{t_{i+1}}^{\gamma}-X_{t_i}^{\gamma}|\mathcal{F}_{t_i})\|\right).$$
Equivalently, thanks to \cite[Eq. (4),(5)]{MZ84} and following paragraph,
$$V_{\tau}(X^{\gamma})=\underset{\|\varphi\|_2\le C_K}{\sup}\int_0^{\tau}\mathbb{E}\left(\langle\varphi_t,\mathrm{d}X_t^{\gamma} \rangle\right)$$
where the supremum is taken over the simple predictable process taking value in the ball of center $0$ and of radius $C_K:=\underset{z\in K}{\sup}\|z\|_2$. It follows from $\eqref{EDS}$, using that the mean of a Brownian motion is equal to $0$, that: $$V_{\tau}(X^{\gamma})=\underset{\|\varphi\|\le C_K}{\sup}\int_0^{\tau}\mathbb{E}(\langle \varphi_t, b(X_t^{\gamma}) \rangle)\mathrm{d}t.$$
So Cauchy-Schwarz inequality gives us: $$V_{\tau}(X^{\gamma})\le C_K\int_{0}^{\tau}\mathbb{E}(\|b(X_t^{\gamma})\|_2)\mathrm{d}t.$$
Since $b$ is bounded by $M$, this quantity is trivially bounded by $C_K\times M\times \tau$ for all $\gamma>0$, and we have our result using Theorem \ref{critere}.
\end{proof}
Now that we proved that $(X^{\gamma})_{\gamma>0}$ is tight, we only have to show that if a subsequence converges, it converges to $\overline{X}$ to conclude. So from now on we assume that $(X^{\gamma_p})_{p\in \N}$ converges weakly to some $\beta$, where $(\gamma_p)_{p\in \N}$ is an unbounded sequence in $]0,+\infty[$. Theorem \ref{critere} gives us furthermore that the trajectories of $\beta$ are almost surely càdlàg.  \\

In order to prove that the law of $\beta$ is the law of $\overline{X}$, we will in fact show that almost all their finite-dimensional distributions are the same, that is to say that for all $r\in \N^*$, for almost all $0\le t_1\le \ldots \le t_r<+\infty$ and for all $f$ continuous function on $K^r$ one has: $$\mathbb{E}\left(f(\beta_{t_1},\ldots,\beta_{t_r} )\right)=\mathbb{E}\left(f(\overline{X}_{t_1},\ldots,\overline{X}_{t_r}) \right).$$

Let us start with a linearization trick:
\begin{lemme}\label{approx2}
For all $f$ continuous on $K^r$, there exists $F:(\R^n)^r\longrightarrow \R$ a $r$-linear function such that: 
\begin{align*}&\int_{[0,+\infty[^r}\left|\mathbb{E}\left(f(X_{t_1}^{\gamma_p},\ldots, X_{t_r}^{\gamma_p})\right)-\mathbb{E}\left(F(X_{t_1}^{\gamma_p},\ldots, X_{t_r}^{\gamma_p})\right)\right|\lambda^{\otimes r}(\mathrm{d}t_1,\ldots,\mathrm{d}t_r)\\& \underset{p\to+\infty}{\longrightarrow}0\end{align*}
where we remind that $\lambda(\mathrm{d}t)=e^{-t}\mathrm{d}t.$
\end{lemme}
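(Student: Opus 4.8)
The plan is to build the $r$-linear function $F$ on $(\R^n)^r$ by tensorizing the one-dimensional affine approximation of $f$ used in the proof of Theorem \ref{mtheorem}. Since $f$ is continuous on the compact set $K^r$, I first note that the values of $f(X_{t_1}^{\gamma_p},\ldots,X_{t_r}^{\gamma_p})$ that matter are those where each argument $X_{t_j}^{\gamma_p}$ lies near $K_0$: by Corollary \ref{corollary}, for each fixed $t_j>0$ we have $\Py_\mu(X_{t_j}^{\gamma_p}\notin\bigcup_{z\in K_0}B(z,\eta))\to 0$ uniformly, and a union bound over $j=1,\dots,r$ shows that with probability tending to $1$ all $r$ marginals are within $\eta$ of $K_0$. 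Hence it suffices to choose $F$ so that $F$ agrees with $f$ on $K_0^r$ and is uniformly close to $f$ on $(\bigcup_{z\in K_0}B(z,\eta))^r$ for $\eta$ small, and then the integrand is controlled by $2\|f\|_\infty$ times the (small) probability of the bad event plus the (small) sup-norm discrepancy on the good event, uniformly in $t_1,\dots,t_r$; integrating against the probability measure $\lambda^{\otimes r}$ and using dominated convergence gives the claim.

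Second, I would construct $F$ explicitly. Recall from the proof of Theorem \ref{mtheorem} (see also Remark \ref{reformulation}) that under the geometric hypothesis there are affine functions $\{f_z\}_{z\in K_0}$ on $\R^n$ with $f_z(y)=\one_{y=z}$ for $y\in K_0$; in fact $f_z = H_z$. Given the continuous $f$ on $K^r$, set
\begin{equation*}
F(x_1,\dots,x_r)=\sum_{z_1,\dots,z_r\in K_0} f(z_1,\dots,z_r)\,\prod_{j=1}^r H_{z_j}(x_j).
\end{equation*}
Each factor $H_{z_j}$ is affine in $x_j$, so $F$ is $r$-linear (affine in each variable, i.e. a sum of a multilinear form and lower-order terms; if strict $r$-linearity is wanted one homogenizes in the standard way). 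By the interpolation property $F=f$ on $K_0^r$. Moreover $F$ is Lipschitz on the compact $K^r$, so given $\varepsilon>0$ one may pick $\eta>0$ small enough that $|f-F|\le\varepsilon$ on $K_0^r$-neighbourhoods; here I would also shrink $\eta$ using uniform continuity of $f$ so that $|f(x_1,\dots,x_r)-f(\pi(x_1),\dots,\pi(x_r))|\le\varepsilon$ whenever each $x_j\in B(z_j,\eta)$, where $\pi(x_j)$ denotes the nearest point of $K_0$, and similarly for $F$.

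Third, I would assemble the estimate. Fix $\varepsilon>0$ and the corresponding $\eta>0$. For $t_1,\dots,t_r>0$, split according to the event $A_{\gamma_p}=\{X_{t_j}^{\gamma_p}\in\bigcup_{z\in K_0}B(z,\eta)\ \forall j\}$. On $A_{\gamma_p}^c$ the integrand is at most $\|f\|_\infty+\|F\|_\infty$, and $\Py(A_{\gamma_p}^c)\le\sum_{j=1}^r\Py(X_{t_j}^{\gamma_p}\notin\bigcup_z B(z,\eta))$, which $\to 0$ uniformly in $(t_1,\dots,t_r)$ ranging in any fixed compact, and is bounded by $1$ everywhere — so after integrating against $\lambda^{\otimes r}$ and using dominated convergence this contribution vanishes as $p\to\infty$. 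On $A_{\gamma_p}$ one has $|f(X_{t_1}^{\gamma_p},\dots)-F(X_{t_1}^{\gamma_p},\dots)|\le|f(\cdots)-f(\pi(\cdots))|+|f(\pi(\cdots))-F(\pi(\cdots))|+|F(\pi(\cdots))-F(\cdots)|\le 2\varepsilon+0$, so this contributes at most $2\varepsilon\int\lambda^{\otimes r}=2\varepsilon$. Since $\varepsilon$ is arbitrary, the whole integral tends to $0$. The main obstacle I anticipate is the bookkeeping to get the convergence of $\Py(A_{\gamma_p}^c)$ to hold after integration in $(t_1,\dots,t_r)$ over all of $[0,+\infty)^r$: Corollary \ref{corollary} is stated for fixed $t>0$, so one uses it pointwise in $(t_1,\dots,t_r)$ with all $t_j>0$ (a $\lambda^{\otimes r}$-null set is excluded at the axes) together with the uniform bound $\le 1$ to invoke dominated convergence — this is exactly the role of the phrase ``for almost all $0\le t_1\le\dots\le t_r$'' in the surrounding argument, and it should be handled cleanly.
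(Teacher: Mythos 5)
Your proof is correct, and the construction of $F$ coincides with the paper's (the multiaffine interpolant $F(x_1,\dots,x_r)=\sum_{\mathbf i}f(z_{i_1},\dots,z_{i_r})\prod_k H_{z_{i_k}}(x_k)$). However, the convergence argument you give is genuinely different from the paper's. The paper proves the lemma by passing through the weak limit $\beta$: it invokes \cite[Theorem 6]{MZ84} twice (once for $f$, once for $F$) to replace $\mathbb{E}(\cdot(X^{\gamma_p}_{t_1},\dots,X^{\gamma_p}_{t_r}))$ by $\mathbb{E}(\cdot(\beta_{t_1},\dots,\beta_{t_r}))$ up to $\lambda^{\otimes r}$-small error, then uses Corollary~\ref{corollary} only indirectly to show $(\beta_{t_1},\dots,\beta_{t_r})\in K_0^r$ a.s., so that $f(\beta_{\cdot})=F(\beta_{\cdot})$. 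Your proof bypasses $\beta$ entirely: you apply Corollary~\ref{corollary} directly to the marginals $X^{\gamma_p}_{t_j}$, split on the good event, use uniform continuity of $f$ and $F$ together with $f=F$ on $K_0^r$ to control the difference on the good event, and conclude by dominated convergence. Both arguments are valid. Your route is more elementary and self-contained — in particular it proves the statement for an arbitrary unbounded sequence $(\gamma_p)$, whereas the paper's proof uses the hypothesis that $X^{\gamma_p}$ converges weakly in the Meyer--Zheng topology to some $\beta$ (though that hypothesis is in force at that point of the paper anyway). The paper's route has the advantage of reusing the same \cite[Theorem 6]{MZ84} machinery that is needed later in the proof of Theorem~\ref{gtheorem}, so nothing extra is introduced. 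One cosmetic remark you already flagged yourself: like the paper, you call $F$ ``$r$-linear'' but it is in fact multiaffine (affine separately in each of the $r$ block variables), which is what the rest of the argument uses.
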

\begin{proof}
Let us first introduce the function $F$. We write $z_0,\ldots,z_n$ the elements of $K_0$ and for all $\textbf{i}=(i_1,\ldots,i_r)\in\llbracket 0,n \rrbracket^r$ we write $F_{\textbf{i}}=f(z_{i_1},\ldots,z_{i_{r}})$. We notice that for all $(x_1,\ldots,x_r)\in \{z_i, \ i\in\llbracket 0,n \rrbracket\}^r$ one has:
\begin{align*}
    f(x_1,\ldots,x_r)&=\sum_{\textbf{i}\in\llbracket 0,n\rrbracket^r} F_{\textbf{i}}\prod_{k=1}^r \mathbb{1}_{z_{i_k}}(x_k) \\
    &=\sum_{\textbf{i}\in\llbracket 0,n\rrbracket^r} F_{\textbf{i}} \prod_{k=1}^r f_{z_{i_k}}(x_k) \\
    &=: F(x_1,\ldots,x_r),
\end{align*}
where the functions $f_{z_{i_k}}$ are the one mentioned in Remark \ref{example}. Since they are all linear, the function $F$ is clearly a continuous $r$-linear map. Now \cite[Theorem 6]{MZ84} applied to $f$ and $F$ says exactly that: \begin{align*}&\int_{[0,+\infty[^r}|\mathbb{E}(f(X_{t_1}^{\gamma_p},\ldots,X_{t_r}^{\gamma_p}))-\mathbb{E}(f(\beta_{t_1},\ldots,\beta_{t_r}))|\lambda^{\otimes r}(\mathrm{d}t_1,\ldots,\mathrm{d}t_r)\\&\underset{p\to+\infty}{\longrightarrow} 0 \numberthis \label{res1} \end{align*}
and:
\begin{align*}
&\int_{[0,+\infty[^r}|\mathbb{E}(F(X_{t_1}^{\gamma_p},\ldots,X_{t_r}^{\gamma_p}))-\mathbb{E}(F(\beta_{t_1},\ldots,\beta_{t_r}))|\lambda^{\otimes r}(\mathrm{d}t_1,\ldots,\mathrm{d}t_r)\\&\underset{p\to+\infty}{\longrightarrow} 0 \numberthis \label{res2}
\end{align*}
We will now invoke an immediate consequence of Corollary \ref{corollary}: for any sequence $0\le t_1 \le \ldots \le t_r<+\infty$ and for any $\varepsilon>0$ one has, using $\Py(A\cap B)\ge \Py(A)+\Py(B)-1$, that: \begin{align*}&\Py_{\mu}\left(X_{t_1}^{\gamma}\in \underset{z\in K_0}{\bigcup} B(z,\eta),\ldots,X_{t_r}^{\gamma}\in \underset{z\in K_0}{\bigcup} B(z,\varepsilon)\right)\\&\ge \Py_{\mu}\left(X_{t_1}^{\gamma}\in \underset{z\in K_0}{\bigcup} B(z,\varepsilon),\ldots,X_{t_{r-1}}^{\gamma}\in \underset{z\in K_0}{\bigcup} B(z,\varepsilon)\right)\\ & \quad+\Py_{\mu}\left(X_{t_r}^{\gamma}\in \underset{z\in K_0}{\bigcup} B(z,\varepsilon)\right)-1  \\
&\ge \left(\sum_{k=1}^r \Py_{\mu}\left(X_{t_k}^{\gamma}\in \underset{z\in K_0}{\bigcup} B(z,\varepsilon)\right)\right)-(r-1) \ \ \ \ \ \text{by recurrence} \\
&\underset{\gamma\to+\infty}{\longrightarrow} 1.
\end{align*}
Thus, since $X^{\gamma_p}$ converges weakly to $\beta$, one has that $(\beta_{t_1},\ldots,\beta_{t_r})\in K_0^r$ almost surely, so eventually one has almost surely: $$f(\beta_{t_1},\ldots,\beta_{t_r})=F(\beta_{t_1},\ldots,\beta_{t_r}).$$
Since this is true almost surely, the expectancy of these two quantities are equal, and combining \eqref{res1} and \eqref{res2} we find:
\begin{align*}
    &\int_{[0,+\infty[^r}|\mathbb{E}(f(X_{t_1}^{\gamma_p},\ldots,X_{t_r}^{\gamma_p}))-\mathbb{E}(F(X_{t_1}^{\gamma_p},\ldots,X_{t_r}^{\gamma_p}))|\lambda^{\otimes r}(\mathrm{d}t_1,\ldots,\mathrm{d}t_r)\\&\underset{p\to+\infty}{\longrightarrow}0 \numberthis \label{res3}
\end{align*}
Therefore, in the large $p$ limit, every continuous function $f$ in $r$ variables can be replaced by its $r$-linearization.
\end{proof}
The $r$-linearization of $f$ is a sum of terms of the form $\prod_{k=1}^r \limits f_{z_{i_k}}$, so we will first study these elementary bricks. One has for $F=\prod_{k=1}^r \limits f_{z_{i_k}}$ that:
\begin{align*}
    &\mathbb{E}_{\overline{\mu}}\left(F(\overline{X}_{t_1},\ldots, \overline{X}_{t_r})\right)\\&=\mathbb{E}_{\overline{\mu}}\left(\prod_{i=1}^r f_{z_{i_k}}(\overline{X}_{t_k}) \right) \\
    &=\mathbb{E}_{\overline{\mu}}\left(\mathbb{E}_{\overline{\mu}}\left(\prod_{i=1}^r f_{z_{i_k}}(\overline{X}_{t_k}) |\overline{X}_{t_{r-1}}=z_{i_{r-1}}\right)\right)\\
    &=\mathbb{E}_{\overline{\mu}}(f_{z_r}(\overline{X}_{t_r})|\overline{X}_{t_{r-1}}=z_{i_{r-1}})\times\mathbb{E}_{\overline{\mu}}\left(\prod_{i=1}^{r-1} f_{z_{i_k}}(\overline{X}_{t_k}) \right) \\
    &=\mathbb{E}_{\overline{\mu}}\left(f_{z_{i_1}}(X_{t_1})\right)\times \prod_{k=2}^r \mathbb{E}_{\mu}\left( f_{z_k}(\overline{X}_{t_k}|\overline{X}_{t_{k-1}}=z_{i_{k-1}})\right) \\
    &=\langle \overline{\mu},e^{t_1}f_{z_1} \rangle\prod_{k=2}^r \langle \delta_{z_{i_{k-1}}}, e^{(t_{k}-t_{k-1})\overline{\La}} f_{z_{i_k}}\rangle \ \ \ \ \text{by Theorem \ref{gtheorem}} \\
    &=\sum_{i=0}^n \overline{\mu}(z_i)\prod_{k=1}^r e^{(t_k-t_{k-1})\overline{\La}}f_{z_k}(z_{i_{k-1}} )
\end{align*}
where we implicitly assumed that $i_0=i$. \\

To prove that this is the limit of $\mathbb{E}\left(\prod_{k=1}^r \limits f_z(X_{t_k}^{\gamma_p})\right)$ for almost any sequence $0\le t_1\le \ldots \le t_r$, we will prove by induction that:
\begin{align*}
    &0=\underset{p\to+\infty}{\lim} \int_{0=t_0\le t_1\le \ldots \le t_r<+\infty} \lambda^{\otimes r}(\mathrm{d}t_1,\ldots,\mathrm{d}t_r) \\ & \quad \left|\mathbb{E}\left(\prod_{k=1}^r f_{z_{i_k}}(X_{t_k}^{\gamma_p}) \right)-\sum_{i=0}^n \overline{\mu}(z_i)\prod_{k=1}^r e^{(t_k-t_{k-1})\overline{\La}}f_{z_k}(z_{i_{k-1}} ) \right| \numberthis \label{moyenne}
\end{align*}
For $r=1$, this is trivial since the integrand is bounded by $2$ that is integrable on $\left((\mathbb{R}_+)^r,\lambda^{\otimes r}\right)$ and it converges to $0$ pointwisely using Theorem \ref{mtheorem}, we conclude using the dominated convergence theorem. \\

We assume that \eqref{moyenne} is proven for $r\in \N^*$ given. Let $(\mathcal{F}_t, t\ge 0)$ be the natural filtration. For all $0\le t_1 \le \ldots \le t_{r+1}<+\infty$, the tower property of conditional expectation and then the Markov property imply:
\begin{align*}
    \mathbb{E}\left(\prod_{k=1}^{r+1} f_{z_{i_{k}}}(X_{t_k}^{\gamma_p}) \right)&=\mathbb{E}\left(\prod_{k=1}^{r} f_{z_{i_{k}}}(X_{t_k}^{\gamma_p}) \times \mathbb{E}(f_{z_{i_{r+1}}}(X_{t_{r+1}}^{\gamma_p})|\mathcal{F}_{t_r} )\right) \\
    &=\mathbb{E}\left(\prod_{k=1}^{r} f_{z_{i_{k}}}(X_{t_k}^{\gamma_p}) \times \langle \delta_{X_{t_r}^{\gamma_p}},e^{(t_{r+1}-t_r)\La_{\gamma_p}}f_{z_{i_{r+1}}}\rangle\right)
\end{align*}
We have therefore:
\begin{align*}
    &\left|\mathbb{E}\left(\prod_{k=1}^{r+1} f_{z_{i_k}}(X_{t_k}^{\gamma_p})\right)-\mathbb{E}\left(\prod_{k=1}^{r} f_{z_{i_k}}(X_{t_k}^{\gamma_p}) \times \langle \delta_{z_{i_r}}, e^{(t_{r+1}-t_r)\La_{\gamma_p}}  f_{z_{i_{r+1}}}\rangle\right) \right| \\
    &=\left| \mathbb{E}\left(\prod_{k=1}^{r} f_{z_{i_k}}(X_{t_k}^{\gamma_p}) \times \langle \delta_{X_{t_r}^{\gamma_p}}, e^{(t_{r+1}-t_r)\La_{\gamma_p}} f_{z_{i_{r+1}}} \rangle\right)\right. \\ &\quad \quad \left. -\mathbb{E}\left(\prod_{k=1}^{r} f_{z_{i_k}}(X_{t_k}^{\gamma_p})  \times \langle \delta_{z_{i_r}}, e^{(t_{r+1}-t_r)\La_{\gamma_p}}f_{z_{i_{r+1}}}  \rangle \right) \right| \\
    &=\left| \mathbb{E}\left(\prod_{k=1}^{r} f_{z_{i_k}}(X_{t_k}^{\gamma_p})  \right. \right. \\ &\quad \quad \quad \left.\left. \times \left(\langle \delta_{X_{t_r}^{\gamma_p}}, e^{(t_{r+1}-t_r)\La_{\gamma_p}}f_{z_{i_{r+1}}} \rangle -\langle \delta_{z_{i_r}}, e^{(t_{r+1}-t_r)\La_{\gamma_p}}f_{z_{i_{r+1}}}  \rangle\right)\right) \right| \\
    &\le\mathbb{E}\left(\left|f_{z_{i_r}}(X_{t_r}^{\gamma_p})\right.\right. \\ &\left. \left. \quad\quad\times \left(\langle \delta_{X_{t_r}^{\gamma_p}}, e^{(t_{r+1}-t_r)\La_{\gamma_p}}f_{z_{i_{r+1}}} \rangle -\langle \delta_{z_{i_r}}, e^{(t_{r+1}-t_r)\La_{\gamma_p}}f_{z_{i_{r+1}}}  \rangle\right) \right| \right)
\numberthis \label{sfinal}\end{align*}
where the penultimate inequality follows $|f_{z_{i_k}}(X_{t_k}^{\gamma_p})|\le 1$. We want to show that \eqref{sfinal} converges to $0$. In order to do that, we will consider approximations of $\langle \delta_{X_{t_r}^{\gamma_p}}, e^{(t_{r+1}-t_r)\La_{\gamma_p}}f_{z_{i_{r+1}}} \rangle$ and $\langle \delta_{z_{i_r}}, e^{(t_{r+1}-t_r)\La_{\gamma_p}}f_{z_{i_{r+1}}}  \rangle$ where $\gamma_p$ only appears in the bras.\\

The key idea is the one used in the proof of Theorem \ref{mtheorem}: instead of considering a complex drift $b$, we approximate our process of generator $\La_{\gamma}$ with a one of generator $\mathcal{G}_{\gamma}$ whose drift $\widetilde{b}$ is linear. Since $f_{z_{i_{r+1}}}$ is affine and the convergence in \eqref{approxlin} is uniform in $\mu$, one has:
$$
    \left|\langle \delta_{X_{t_r}^{\gamma_p}}, e^{(t_{r+1}-t_r)\La_{\gamma_p}}f_{z_{i_{r+1}}} \rangle-\langle \delta_{X_{t_r}^{\gamma_p}}, e^{(t_{r+1}-t_r)\mathcal{G}_{\gamma_p}}f_{z_{i_{r+1}}} \rangle \right|\underset{p\to+\infty}{\longrightarrow} 0, 
$$
and
$$\left|\langle \delta_{z_{i_r}}, e^{(t_{r+1}-t_r)\La_{\gamma_p}}f_{z_{i_{r+1}}}\rangle-\langle \delta_{z_{i_r}}, e^{(t_{r+1}-t_r)\mathcal{G}_{\gamma_p}}f_{z_{i_{r+1}}}\rangle \right|\underset{p\to+\infty}{\longrightarrow} 0,$$
where the first convergence is uniform in $\omega$ in the sample space.\\

We can therefore replace $\La_{\gamma}$ by $\mathcal{G}_{\gamma}$ in \eqref{sfinal}, but since $\widetilde{b}$ is linear, Lemma \ref{elin} gives us: $$\langle \delta_{z_{i_r}}, e^{(t_{r+1}-t_r)\La_{\gamma_p}}f_{z_{i_{r+1}}}\rangle=\langle \delta_{z_{i_r}}, e^{(t_{r+1}-t_r)\widetilde{\La^{(0)}}}f_{z_{i_{r+1}}}\rangle$$
and
$$\langle \delta_{X_{t_r}^{\gamma_p}}, e^{(t_{r+1}-t_r)\La_{\gamma_p}}f_{z_{i_{r+1}}}\rangle=\langle \delta_{X_{t_r}^{\gamma_p}}, e^{(t_{r+1}-t_r)\widetilde{\La^{(0)}}}f_{z_{i_{r+1}}}\rangle$$
We eventually integrate \eqref{sfinal} and use the last two equalities:
\begin{align*}
    &\underset{p\to+\infty}{\limsup} \int_{0=t_0\le t_1\le \ldots \le t_{r+1}<+\infty} \lambda^{\otimes r}(\mathrm{d}t_1,\ldots,\mathrm{d}t_{r+1})\\ &\left|\mathbb{E}\left(\prod_{k=1}^{r+1} f_{z_{i_k}}(X_{t_k}^{\gamma_p})\right)-\mathbb{E}\left(\prod_{k=1}^{r} f_{z_{i_k}}(X_{t_k}^{\gamma_p}) \times \langle \delta_{z_{i_r}}, e^{(t_{r+1}-t_r)\La_{\gamma_p}}  f_{z_{i_{r+1}}}\rangle\right) \right|  \\
    &\le \underset{p\to+\infty}{\limsup} \int_{0}^{+\infty} \mathbb{E}\left(\left|f_{z_{i_r}}(X_{t_r}^{\gamma_p}) \right.\right. \\ &\quad \left.\left.\times \left(\langle \delta_{X_{t_r}^{\gamma_p}}, e^{(t_{r+1}-t_r)\widetilde{\La^{(0)}}}f_{z_{i_{r+1}}}\rangle -\langle \delta_{z_{i_r}}, e^{(t_{r+1}-t_r)\widetilde{\La^{(0)}}}f_{z_{i_{r+1}}}\rangle\right) \right| \right) \lambda(\mathrm{d}t_{r+1}) \\
    &= \int_{0}^{+\infty} \mathbb{E}\left(\left|f_{z_{i_r}}(\beta_{t_r})\right.\right. \\ & \quad \left.\left.\times \left(\langle \delta_{\beta_{t_r}}, e^{(t_{r+1}-t_r)\widetilde{\La^{(0)}}}f_{z_{i_{r+1}}}\rangle -\langle \delta_{z_{i_r}}, e^{(t_{r+1}-t_r)\widetilde{\La^{(0)}}}f_{z_{i_{r+1}}}\rangle\right) \right| \right) \\
    &=0,
\end{align*}
the last equality being a consequence of $f_{z_{i_r}}(\beta_{t_r})=\mathbb{1}_{z_{i_r}}(\beta_{t_r})$ for all $t_r$, so that the product with the other term vanished necessarily. Now invoking the induction hypothesis with $r$:
\begin{align*}
    0=&\underset{p\to+\infty}{\limsup} \int_{0=t_0\le t_1\le \ldots \le t_{r+1}<+\infty}\left|\sum_{i=0}^n \overline{\mu}(z_i)\prod_{k=1}^{r+1} e^{(t_k-t_{k-1})\overline{\La}}f_{z_k}(z_{i_{k-1}} ) \right. \\ &\left. -\mathbb{E}\left(\prod_{k=1}^{r} f_{z_{i_k}}(X_{t_k}^{\gamma_p}) \times \langle \delta_{z_{i_r}}, e^{(t_{r+1}-t_r)\La_{\gamma_p}}  f_{z_{i_{r+1}}}\rangle\right)  \right| \lambda^{\otimes r}(\mathrm{d}t_1,\ldots,\mathrm{d}t_{r+1})
\end{align*}
Combining the last limits, we prove the claim for $r+1$.\\

Using that $F$ is a sum of terms of the form $\prod_{k=1}^r \limits f_{z_{i_k}}$, equation \eqref{moyenne} implies:
\begin{align*}
    0&=\underset{p\to+\infty}{\lim} \int_{0=t_0\le t_1\le \ldots \le t_r<+\infty} \left|\mathbb{E}\left(F(X_{t_1}^{\gamma_p},\ldots, X_{t_r}^{\gamma_p}) \right)\right.\\& \ \ \ \ \   \ \ \ \ \left.-\sum_{\textbf{i}\in\llbracket 0,n \rrbracket^r} F_{\textbf{i}}\sum_{i=0}^n \overline{\mu}(z_i)\prod_{k=1}^r e^{(t_k-t_{k-1})\overline{\La}}f_{z_k}(z_{i_{k-1}} ) \right| \lambda^{\otimes r}(\mathrm{d}t_1,\ldots,\mathrm{d}t_r) \numberthis \label{cfinal}
\end{align*}
This last result combined with equations \eqref{res2} and \eqref{res3} implies that for $\lambda^{\otimes r}$-almost every (so for Lebesgue-almost every) sequence $0\le t_1\le\ldots \le t_r$ and for any function $f$ continuous, we have:\begin{align*}
    &\underset{p\to+\infty}{\lim} \mathbb{E}\left(f(X_{t_1}^{\gamma_p}, \ldots, X_{t_r}^{\gamma_p})\right) \\
    &=\underset{p\to+\infty}{\lim} \mathbb{E}\left(F(X_{t_1}^{\gamma_p}, \ldots, X_{t_r}^{\gamma_p})\right) \\
    &=\mathbb{E}\left(F(\beta_{t_1}, \ldots, \beta_{t_r})\right) \\
    &=\underset{p\to+\infty}{\lim} \mathbb{E}\left(F(X_{t_1}^{\gamma_p}, \ldots, X_{t_r}^{\gamma_p})\right) \\
    &=\sum_{\textbf{i}\in\llbracket 0,n \rrbracket^r} F_{\textbf{i}}\sum_{i=0}^n \overline{\mu}(z_i)\prod_{k=1}^r e^{(t_k-t_{k-1})\overline{\La}}f_{z_k}(z_{i_{k-1}} )
\end{align*}
Thus, almost all the finite-dimensional distributions of $\beta$ and $\overline{X}$ are the same. Finally, \cite[Theorem 6]{MZ84} yields that $\beta$ has the same law as the processes $\overline{X}$ where $\overline{X}$ is the Markov process on $K_0$ of generator $\overline{\La}$ and of initial condition $\overline{\mu}$. This proves Theorem \ref{gtheorem}.

\section{Discussion on the geometrical hypothesis}
\label{sec:CE}

The proof of our theorem massively use affine approximations, first of the drift then of our test functions, and thus falls apart if we do not assume that $K_0$ is composed of $n+1$ affinely independent points.\\

We may however ask ourselves if it is only a technical hypothesis, or if it is completely essential, that is to say Theorem \ref{mtheorem} is false when it is not fulfilled. In this section, we actually construct a counterexample of our theorem with a compact set $K$ of $\R^2$ and a diffusion process on it where the cardinal of $K_0$ is equal to $4>2+1$.\\

Let us give a graphic representation of our counterexample:
\begin{center}
\begin{tikzpicture}[scale=2]
\draw[ultra thick] (0,0) -- (2,0);       
\draw[ultra thick] (0,0) -- (1,4);   
\draw[ultra thick] (2,0) -- (1,4);  
\draw (0,0) node [below] {$A$};
\draw (2,0) node [below] {$B$};
\draw (1,4) node [above right] {$C$};

\draw[red, ultra thick] (0.5,1) -- (1.5,1);
\draw[red] (0.5,0.95) -- (1.5,0.95);
\draw[red] (0.5,0.90) -- (1.5,0.90);
\draw[red] (0.5,0.85) -- (1.5,0.85);
\draw[red] (0.5,0.80) -- (1.5,0.80);
\draw[red] (0.5,0.75) -- (1.5,0.75);
\draw[red] (0.5,0.70) -- (1.5,0.70);
\draw[red] (0.5,0.65) -- (1.5,0.65);
\draw[red] (0.5,0.60) -- (1.5,0.60);
\draw[red] (0.5,0.55) -- (1.5,0.55);
\draw[red] (0.5,0.50) -- (1.5,0.50);
\draw[red] (0.5,0.45) -- (1.5,0.45);
\draw[red] (0.5,0.40) -- (1.5,0.40);
\draw[red] (0.5,0.35) -- (1.5,0.35);
\draw[red] (0.5,0.30) -- (1.5,0.30);
\draw[red] (0.5,0.25) -- (1.5,0.25);
\draw[red] (0.5,0.20) -- (1.5,0.20);
\draw[red] (0.5,0.15) -- (1.5,0.15);
\draw[red] (0.5,0.15) -- (1.5,0.15);
\draw[red] (0.5,0.1) -- (1.5,0.1);
\draw[red] (0.5,0.05) -- (1.5,0.05);
\draw[blue, ultra thick] (1,0) node[below right] {$O$} node{$\bullet$};
\draw[->, dashed] (-1,0)--(3,0) node[right]{$x$};
\draw[->, dashed] (1,-1)--(1,5) node[above]{$y$};
\draw[ultra thick, red] (0.5,0)--(0.5,1);
\draw[ultra thick, red] (1.5,0)--(1.5,1);
\end{tikzpicture}
\end{center}
Here, our compact $K$ is the solid triangle of vertices $A(-1,0)$, $B(1,0)$, and $C(0,4)$. Let us define the dominant noise coefficient $\sigma$. For $(x,y)\in [-\frac{1}{2},\frac{1}{2}]\times]0,1]$, we take: 
$$\sigma(x,y)=(x^2(1-x)(1+x)+y^2)\begin{pmatrix}
1 & 0 \\
0 & 0
\end{pmatrix}$$
If we furthermore take for $x\in [-1,1]$ : $$\sigma(x,0)=(x^2(1-x)(1+x)+y^2)\begin{pmatrix}
1 & 0 \\
0 & 0
\end{pmatrix}$$
we have defined a smooth function on $[-1,1]\times \{0\} \cup ]-\frac{1}{2},\frac{1}{2}]\times]0,1]$ that is null in $A,B$ and $O$. While it is not very interesting to write it down explicitly, it is really not hard to find a smooth extension of $\sigma$ that will be null only on $A,B,C$ and $O$. We have thus $K_0=\{A,B,C,O\}$.\\

To understand where the problem is, it may be interesting to consider the functions $H_z$ for $z\in K_0$. The key hypothesis of the (uniform) ergodic theorem was that the functions $H_z$ were continuous on $K$. We claim this is false in our example.\\

Let us consider $H_{O}$: on the one hand, one has obviously that $H_{O}(O)=1$. On the other hand, let us consider the dominant diffusion process $(X_t)_{t\ge 0}$ solution of \eqref{eq:purenoise-007}, starting from $(0,y)$ with $y>0$. Then, for $\omega$ an element of the sample space, if $X_t(\omega)$ converges to $O$ when $t$ approaches infinity, for any $\varepsilon>0$, one has $X_t(\omega)\in B(O,\varepsilon)$ for all $t$ big enough. But in fact, the red lines of our drawing are insurmountable obstacles: since the noise on the $y$-axis is null on it, if $X_t(\omega)$ is on one of them at a given time $t$, in order to decrease along the $y$-axis it has to go out of the red box and enter in it again. Yet we know that $X_t(\omega)\in B(O,\frac{1}{2})$ for $t$ big enough, and thus $X_t(\omega)$ cannot get out of the box $[-\frac{1}{2},\frac{1}{2}]\times[0,1]$. Therefore, we have $$X_t(\omega)\in \left[-\frac{1}{2},\frac{1}{2}\right]\times\{0\}$$ for $t$ big enough, that is to say we reached the bottom line in finite time. But since we have to quit the box to get to the line, there exists $T$ such that $$X_{T}(\omega)\in \left(\left[-1,\frac{1}{2}\right]\cup \left[\frac{1}{2},1\right]\right)\times \{0\}.$$ Starting from any point of this set, the probability of converging to $O$ is less or equal to $\frac{1}{2}$, so using Markov strong property (the process in homogeneous in time), we have that the probability of converging to $O$ starting from $(0,y)$ is less or equal to $\frac{1}{2}$ for any $y>0$: our function $H_O$ cannot be continuous.

\begin{remark}\label{r5}
If the noise is Lipschitz, it is in fact impossible to reach the border in finite time, so we have actually $H_z(x,y)=0$ for all $y>0$: the state $O$ is unreachable except if one starts from the bottom line.
\end{remark}

We easily see that Theorem \ref{mtheorem} cannot hold in this situation, since our objects are not even well defined: to compute matrix $\overline{\La}$, one has to consider the quantity: $$b(O)\cdot \nabla H_O(O),$$
that is not well defined if $b$ is non null on the $y$-axis.\\

Furthermore, even if our process nevertheless was converging to a stochastic process, there are situations where we know that the latter could not be a continuous-time Markov process with state space $K_0$. We consider the noise $\sigma$ defined above (that is only null on $A,B,C$ and $O$), and a smooth drift $b$ that checks: $$\left\{\begin{array}{ll}
     b(x,y)=\begin{pmatrix}
 0 \\
 1
\end{pmatrix}  & \text{if $(x,y)\in [-\frac{1}{2},\frac{1}{2}]\times[0,1]$}, \\
     b\in \mathbb{R}_+ \begin{pmatrix} 0 \\
1
\end{pmatrix}, \\
b=0 &\text{outside the box $[-\frac{5}{9},\frac{5}{9}]\times[0,\frac{10}{9}]$}.
\end{array}\right.$$ 
It is easy to prove the existence of such a drift. We assume that the stochastic process $(X_t^{\gamma})_{t\ge 0}$ starting from $O$ of generator $\La_{\gamma}$ converges in the weak sense of Theorem \ref{mtheorem} to the Markov process $(\widetilde{X}_t)_{t\ge 0}$ starting from $O$ of state space $K_0$ and of generator $\overline{\La}$, and we will get a contradiction.\\

The idea of the proof is the following: since $(\widetilde{X}_s)_{s\ge 0}$ is a Markov process starting from $O$, we have $\widetilde{X}_t\in O$ with a probability approaching $1$ for a time $t>0$ small enough. Let us now consider the process $(X_s^{\gamma})_{s\ge 0}$ when $\gamma$ is big enough. During $[0,t]$ it first enters the box $[-\frac{1}{2},\frac{1}{2}]\times]0,1]$ pushed by the drift in $O$, then, since the noise inside is very strong, it quits the red box and, from the exit point, finally approaches $A$, $B$ or $C$ with a probability greater than a strictly positive quantity. Since this one is independent of $\gamma$ and $t$ when $\gamma$ is big enough, it will contradict the fact that it should converge to $0$ when $\gamma$ approaches infinity and $t$ approaches $0$.\\

We consider a smooth function $f$ such that $f(O)=1$. We assume furthermore that $0\le f\le 1$ and that $f$ is null outside $B(O,\tfrac{1}{8})$. We know that there exists $t>0$ small enough such that: $$\langle \delta_O, e^{t\overline{\La}} \overline{f} \rangle \ge 1-2^{-10}.$$ 
Hence for $\gamma$ big enough: \begin{equation}\label{A}\mathbb{P}\left(X_t^{\gamma}\in B(O,\tfrac{1}{8})\right)\ge\langle \delta_O, e^{t\La_{\gamma}} f\rangle \ge 1-2^{-9}\end{equation}

Let us now consider the process $(X_s^{\gamma})_{s\ge 0}$ starting from $O$ of generator $\La_{\gamma}$. We write $T^{\gamma}$ the hitting time of the boundary of our red square: $$\inf\left\{s>0, X_s^{\gamma}\in \left\{-\frac{1}{2}\right\}\times[0,1]\bigcup\left\{\frac{1}{2}\right\}\times[0,1]\bigcup \left[-\frac{1}{2},\frac{1}{2}\right]\times \{1\}\right\}.$$

We may now prove that it is arbitrarily small with a high probability when $\gamma$ approaches infinity. Indeed let $g_{\varepsilon}$ be a smooth function such that $g_{\varepsilon}(A)=g_{\varepsilon}(B)=g_{\varepsilon}(C)=g_{\varepsilon}(O)=1$ and we furthermore assumed that $0\le g_{\varepsilon}\le 1$ and that $g_{\varepsilon}$ is null outside of the balls of center $z\in K_0$ and of radius $\varepsilon$. Then for any $s>0$ and for any $\varepsilon$ one has that \begin{equation}\label{B}\mathbb{P}\left(X_s^{\gamma}\in \underset{z\in K_0}{\bigcup}B(z,\varepsilon)\right)\ge\langle \delta_O, e^{s\La_{\gamma}}g_{\varepsilon} \rangle\underset{\gamma\to+\infty}{\longrightarrow} \langle \delta_O, e^{s\overline{\La}}\overline{g_{\varepsilon}} \rangle=1 .\end{equation}
Now let us assume that there exists an increasing sequence $(\gamma_k)_k$ going to infinity such that $$\mathbb{P}(T_{\gamma_k}>s)\ge \alpha>0.$$
We notice that for $\omega$ in the sample space such that $T_{\gamma}(\omega)>s$, using that $\sigma\cdot e_y=0$ into the red box, we have that: $$X_s^{\gamma}(\omega)\cdot e_y=\int_{0}^s b(X_u^{\gamma}(\omega))\mathrm{d}u\cdot e_y= s.$$ 
Since $X_s^{\gamma}(\omega)$ is in the red box, we have thus: $$
    \mathbb{P}\left(X_s^{\gamma_k}\notin \underset{z\in K_0}{\bigcup}B(z,\frac{s}{2})\right)\ge \mathbb{P}(T_{\gamma_k}>s)\ge\alpha$$
for all $k\in \N$, but it contradicts $\eqref{B}$ when $k$ approaches infinity.\\

In conclusion, for all $s>0$, one has for $\gamma$ large enough that $\mathbb{P}(T_{\gamma}\le s)\ge \frac{1}{2}$, and we notice that for $\omega$ such that $T_{\gamma}(\omega)\le s$, one has \begin{equation}\label{C}
    X_{T_{\gamma}(\omega)}^{\gamma}(\omega)\cdot e_y=\int_{0}^{T_{\gamma}(\omega)} b(X_u^{\gamma}(\omega))\cdot e_y\mathrm{d}u=T_{\gamma}(\omega)\le s.
\end{equation}
We now consider the two points $R_{-1}(-\frac{1}{2},0)$ and $R_1(\frac{1}{2},0)$. We just proved that choosing $\gamma$ big enough, we can make sure that $(X_u^{\gamma})_{u\ge 0}$ reaches a point on the border of the red box arbitrarily close to $R_{-1}$ or $R_1$ during $[0,t]$ with a probability at least $\frac{1}{2}$. We will thus make two approximations: we will first approach during a short time the  diffusion process starting from the point $x_{\gamma}=X_{T_{\gamma}}^{\gamma}$ with the dominant one starting from the same point (using Lemma \ref{approx1}), and then approach the dominant process starting from $x_{\gamma}$ with the one starting from $R_{-1}$ or $R_1$.\\

Since both cases are symmetric, from now on we assume that $x_{\gamma}\in \{\frac{1}{2}\}\times[0,1]$. We consider $(X_s (z))_{s\ge 0}$ the process of generator $\La^{(1)}$ starting from $z\in K$. We have with a probability $\frac{1}{2}$ that $X_s (R_1)\underset{s\to+\infty}{\longrightarrow}B$, so there exists $\beta>0$ such that: $$\mathbb{P}\left(X_{\beta}(R_1)\in B(B,\tfrac{1}{32})\left|\right.X_{\infty}=B\right)\ge \frac{3}{4} , $$  
and therefore:\begin{align*}&\mathbb{P}\left(X_{\beta}(R_1)\in B(B,\tfrac{1}{32})\right)\\&\ge\mathbb{P}\left(X_{\beta}(R_1)\in B(B,\tfrac{1}{32})\left|\right.X_{\infty}=B\right)\mathbb{P}(X_{\infty}(R_1)=B)\\&\ge \frac{3}{4}\times \frac{1}{2}\\&\ge\frac{3}{8} . \numberthis \label{D}\end{align*}
We now use the dependence on initial conditions: there exists $\eta\in ]0,\frac{t}{2}[$ such that for all $x\in B(R_1,\eta)$, one has: $$\underset{u\in [0,\beta]}{\sup} \mathbb{E}(\left\|X_u(R_1)-X_u(x) \right\|_2)\le \frac{1}{128}.$$
so by Markov's inequality, for all $x\in B(R_1,\eta)$: \begin{equation}\label{E}\mathbb{P}\left(\|X_{\beta}(R_1)-X_{\beta}(x)\|_2\ge \frac{1}{32} \right)\le \frac{1}{8}.\end{equation}
We now use $\eqref{B}$: for $\gamma$ large enough, taking $s=\eta$, we have, with a probability at least $\frac{1}{2}$, that $T_{\gamma}\le \eta$ and thus $X_{T_{\gamma}}^{\gamma}\in B(R_1,\eta)$ (respectively $B(R_{-1},\eta)$).\\

We now write $\gamma=\frac{\beta}{h}$, and we take $\gamma$ big enough so that, using Lemma \ref{approx1}, we have for all $x\in B(R_1,\eta)$: $$\mathbb{E}\left(\|X_{h}^{\frac{\beta}{h}}(x)-X_{\beta}(x)\|_2\right)\le \frac{1}{128}$$ and therefore: \begin{equation}\label{F}\mathbb{P}\left(\|X_{h}^{\frac{\beta}{h}}(x)-X_{\beta}(x)\|_2\ge\frac{1}{16}\right)\le \frac{1}{8}.\end{equation}
Finally we have using $\eqref{D}$, $\eqref{E}$ and $\eqref{F}$: \begin{align*}
    &\mathbb{P}\left(X_{h}^{\gamma}(x)\notin B(B,\tfrac{1}{8})\right)\\&\le \mathbb{P}\left(\{X_\beta(R_1)\notin B(B,\frac{1}{32})\}\bigcup\{\|X_{\beta}(R_1)-X_{\beta}(x)\|_2\ge \frac{1}{32}|\}\right.\\ & \left. \quad \quad \quad \quad\bigcup\{\|X_{\beta}(x)-X_{h}^{\frac{\gamma}{h}}(x)\|_2\ge \frac{1}{16}|\}\right) \\
    &\le \frac{5}{8}+\frac{1}{8}+\frac{1}{8} \\
    &\le \frac{7}{8}.
\end{align*}
Thus we have: \begin{align*}
    &\mathbb{P}\left(\{X_{T_{\gamma}+h}\in B(A,\tfrac{1}{8})\cup B(B,\tfrac{1}{8})\}\bigcap \{T_{\gamma}\le \frac{t}{2}\}\right)\\ &\ge \mathbb{P}\left(\{X_{T_{\gamma}+h}\in B(A,\tfrac{1}{8})\cup B(B,\tfrac{1}{8})\}\bigcap \{T_{\gamma}\le \eta\}\right) \\
    &\ge \mathbb{P}\left(X_{T_{\gamma}+h}\in B(A,\tfrac{1}{8})\cup B(B,\tfrac{1}{8})| T_{\gamma}\le \eta\right)\mathbb{P}\left(T_{\gamma}\le \eta \right) \\
    &\ge \frac{1}{8}\times \frac{1}{2}\\
    &\ge \frac{1}{16} . \numberthis \label{G}
\end{align*}
We thus have with a probability at least $\frac{1}{16}$ that at a time $T_{\gamma}+h<t$ our process will be in balls of radius $\tfrac{1}{8}$ of centers $A$ or $B$, this result being true for all $\gamma$ big enough. We just have to prove that during $[T_{\gamma}+h,t]$, our process stays, with a fix strictly positive probability, far from the ball $B(O,\tfrac{1}{8})$ to find a contradiction with $\eqref{A}$. The idea of the proof is to show that if one is close to $B$, the probability that it stays close to $B$ forever is strictly positive. \\

We consider $S$ the stopping defined by: $$S(x)=\inf\left\{s\ge0, X_s(x)\notin B(B,\tfrac{1}{4})\right\}.$$
We notice that the definition $S$ involves the dominant process and not the general one, and for a good reason: as long as our process lives outside the box $[-\frac{5}{9},\frac{5}{9}]\times[0,\frac{10}{9}]$, the drift is null and our complex process acts exactly like the dominant one. We have for all $x\in B(B,\tfrac{1}{8})$ that $(X_{S(x)\wedge u}(x))_{u\ge 0}$ is a martingale, so we have that: $$x=\mathbb{E}(X_0(x))=\mathbb{E}(X_{S(x)}(x)),$$ with $S(x)$ being potentially infinite. Since $(X_{u}(x))_{u\ge 0}$ converges (it is a bounded martingale), if $S(x)(\omega)=+\infty$ then $X_{\infty}(x)(\omega)=B$. We have therefore, rewriting the last equation, that: \begin{equation}\label{H}x=\mathbb{P}(S(x)=+\infty)B+\mathbb{P}(S(x)<+\infty)\mathbb{E}(X_{S(x)}(x)|S(x)<+\infty).\end{equation}
Using basic geometry we have that $W(x)=\mathbb{E}(X_{S(x)}(x)|S(x)<+\infty)$, being the barycenter of points from $\partial B(B,\frac{1}{4})\cap K$, checks \begin{equation}\label{I}\|W(x)-B\|_2\ge \frac{\sqrt{2}}{8}.\end{equation}
Since $x\in B(B,\tfrac{1}{8})$, one has using $\eqref{H}$ and $\eqref{I}$ that: \begin{equation}\label{J}\mathbb{P}(S(x)=+\infty)\ge \frac{\sqrt{2}-1}{\sqrt{2}}>\frac{1}{4}\end{equation}
Our dominant process starting from $x\in B(B,\tfrac{1}{8})$ does not quit $B(B,\frac{1}{4})$ with a probability at least $\frac{1}{4}$. Yet, if it never quits $B(B,\frac{1}{4})$, it will never reach $B(O,\tfrac{1}{8})$, so using $\eqref{G}$ and $\eqref{J}$ we get for $\gamma$ big enough: \begin{align*}
    &\mathbb{P}\left(X_t^{\gamma}\notin B(O,\tfrac{1}{8})\right)\\&\ge \mathbb{P}\left(\{X_t^{\gamma}\notin B(O,\tfrac{1}{8})\}\bigcap \{X_{T_{\gamma}+h}^{\gamma}\in B(A,\tfrac{1}{8})\cup B(B,\tfrac{1}{8})\}\bigcap\{T_{\gamma}\le \frac{t}{2}\}\right) \\&\ge \mathbb{P}\left(X_t^{\gamma}\notin B(O,\tfrac{1}{8})| X_{T_{\gamma}+h}^{\gamma}\in B(A,\tfrac{1}{8})\cup B(B,\tfrac{1}{8}),T_{\gamma}\le \frac{t}{2}\right)\\ & \ \ \ \ \ \ \ \ \mathbb{P}\left(\{X_{T_{\gamma}+h}^{\gamma}\in B(A,\tfrac{1}{8})\cup B(B,\tfrac{1}{8})\}\bigcap\{T_{\gamma}\le \frac{t}{2}\}\right) \\
    &\ge \mathbb{P}\left(S(X_{T_{\gamma}+h}^{\gamma})=+\infty|X_{T_{\gamma}+h}^{\gamma}\in B(A,\tfrac{1}{8})\cup B(B,\tfrac{1}{8}),T_{\gamma}\le \frac{t}{2}\right)\\ &\ \ \ \ \ \ \ \ \mathbb{P}\left(\{X_{T_{\gamma}+h}^{\gamma}\in B(A,\tfrac{1}{8})\cup B(B,\tfrac{1}{8})\}\bigcap\{T_{\gamma}\le \frac{t}{2}\}\right) \\
    &\ge \frac{1}{4}\times \frac{1}{16} \\
    &\ge 2^{-6} \numberthis \label{K}
\end{align*} 
Finally $\eqref{A}$ gives us $\mathbb{P}\left(X_t^{\gamma}\notin B(O,\tfrac{1}{8})\right)\le 2^{-9}$ and \eqref{K} gives us $\mathbb{P}\left(X_t^{\gamma}\notin B(O,\tfrac{1}{8})\right)\ge 2^{-6}$: Contradiction.\\

We get that one cannot write the potential limit of our stochastic process of generator $\La_{\gamma}$, as $\gamma$ approaches infinity, as a pure jump Markov process of state space $K_0$.

\begin{remark}\label{r6}
In this counterexample, we notice that $O$ belongs to the segment $[A,B]$. More generally, it is easy to build a counterexample of the theorem when the $(n+2)^{\text{th}}$ element of $K_0$ belongs to the convex envelope of the other points: if our extra point is in the interior of $K$, we just have to squeeze it between two red boxes like the one of our counterexample to find similar contradictions. We have however not yet found a counterexample of our theorem when there is at least $n+2$ points but we assumed that the the points of $K_0$ are exactly the extremal points of $K$.
\end{remark}
\begin{remark}\label{r7}
One may say that our counterexample is a pathological one since our point $O$ is unreachable if one does not start from a point of the boundary, that is say unreachable from almost any starting point. We may indeed imagine that our limit process could be, from almost any starting point, a Markov chain on the points $\{A,B,C\}$. However, and it is quite surprising, a point that almost surely does not exist for the dominant process, may in fact exists for the complex process. Indeed, if one takes on the compact $K$ of our counterexample: $$
     b(A)=\begin{pmatrix}
1  \\
0 
\end{pmatrix} , \
     b(B)=\begin{pmatrix}
-1  \\
0 
\end{pmatrix} , \
b(C)=\begin{pmatrix}
 0 \\
 -1\end{pmatrix}, \ b(O)=0,$$
and the same $\sigma$ as above, then we observe in the limit process an increasing accumulation of the mass on $O$, whatever the starting point is. We cannot thus make as if $O$ was not existing.
\end{remark}

\section*{Appendix: Proof of the Ergodic Theorem (Theorem \ref{ergodique})}
\label{sec:Appendix}

We denote by ${\mathcal P} (K)$ the set of probability measures on $K$. We recall that $K$ is equipped with the usual Euclidean norm $\Vert \cdot \Vert_2$. For $\mu,\nu \in {\mathcal P} (K)$, the (first) Wasserstein's distance between $\mu$ and $\nu$ is defined as:
$$d_W(\mu,\nu)=\sup \left\{\int_{K} \varphi(z)(\mathrm{d}\mu(z)-\mathrm{d}\nu(z))\left| \   \text{$\varphi$ Lipschitz},\ \textbf{Lip}(\varphi)\le 1 \right. \right\}.$$
We recall that $({\mathcal P} (K), d_W)$ is Polish space and that $d_W$ metrizes the weak convergence on $\mathcal P (K)$.\\

From now on we write $H(x,\cdot) \in {\mathcal P} (K)$ the law of $X_{\infty}(x)$. We have then by definition of the functions  $H_z$ that for any Lipschitz  function $f$ on $K$: $$\Pa f(x)=\sum_{z\in K_0} H_z(x) f(z)=\int_K f(z)H(x,\mathrm{d}z).$$

If we assume that the functions $H_z$ are continuous, we have easily that the function $x\in K\mapsto H(x,\cdot) \in {\mathcal P} (K)$ is continuous.

\begin{remark}
In this article we only consider $K_0$ finite, but our proof of the uniform ergodic theorem would still work whatever is the form of the null set of $\sigma$ as long as the dominant process is pure noise (that is to say $(X_t)_{t\ge 0}$ is a martingale) and the function $x\mapsto H(x,\cdot)$ is continuous (where $H(x,\cdot)$ is still the law of $X_{\infty}(x)$).
\end{remark}
Let us start with a technical lemma:

\begin{lemme}
\label{continuite}
If $x\in K\mapsto H(x,\cdot) \in {\mathcal P} (K)$ is continuous, then for every $\alpha,h>0$, there exists $\beta>0$ such that for every $z,y\in K$, $$d_W(\delta_z,H(y,\cdot))\le \beta \ \ \ \ \text{implies } \ \ \  \mathbb{P}(X_{\infty}(y)\in B(z,h))\ge 1-\alpha.$$
\end{lemme}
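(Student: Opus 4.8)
The plan is to recognize Lemma~\ref{continuite} as essentially Markov's inequality applied to the Wasserstein distance. Note in particular that, for this statement, one does not really use the continuity of $x\mapsto H(x,\cdot)$: all that is needed is that $X_{\infty}(y)$ be an almost surely well-defined $K$-valued random variable, which holds here because $(X_t(y))_{t\ge 0}$ is a bounded martingale and hence converges almost surely (as recalled before Theorem~\ref{ergodique}). The continuity hypothesis is carried along only because it is what is actually invoked in the proof of the ergodic theorem itself.

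First I would rewrite $d_W(\delta_z,H(y,\cdot))$ in a way that exposes the distance to $z$. Since the only coupling of $\delta_z$ with $H(y,\cdot)$ is the product measure, one has $d_W(\delta_z,H(y,\cdot))=\int_K \|w-z\|_2\,H(y,\mathrm{d}w)=\mathbb{E}\!\left(\|X_{\infty}(y)-z\|_2\right)$. If one prefers to argue directly from the Kantorovich--Rubinstein formula recalled in the Appendix, it is enough to insert the test function $\varphi_z:w\mapsto -\|w-z\|_2$, which is $1$-Lipschitz on $\mathbb{R}^n$ (hence on $K$) by the reverse triangle inequality and bounded on the compact set $K$; evaluating the dual expression at $\varphi_z$ gives the lower bound $d_W(\delta_z,H(y,\cdot))\ge \varphi_z(z)-\int_K \varphi_z\,\mathrm{d}H(y,\cdot)=\mathbb{E}\!\left(\|X_{\infty}(y)-z\|_2\right)$, which is all that is used below.

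Then I would conclude by Markov's inequality: for every $h>0$,
$$\mathbb{P}\!\left(X_{\infty}(y)\notin B(z,h)\right)\le \mathbb{P}\!\left(\|X_{\infty}(y)-z\|_2\ge h\right)\le \frac{1}{h}\,\mathbb{E}\!\left(\|X_{\infty}(y)-z\|_2\right)\le \frac{d_W\!\left(\delta_z,H(y,\cdot)\right)}{h}.$$
Given $\alpha,h>0$, it then suffices to take $\beta:=\alpha h$: whenever $d_W(\delta_z,H(y,\cdot))\le\beta$, the right-hand side is at most $\alpha$, i.e. $\mathbb{P}(X_{\infty}(y)\in B(z,h))\ge 1-\alpha$. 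Since $\beta$ depends only on $\alpha$ and $h$, and not on the pair $(z,y)$, the estimate is uniform over $z,y\in K$, as required.

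The argument does not present a genuine obstacle; the only point requiring a little care is checking that $\varphi_z$ is an admissible competitor in the supremum defining $d_W$ (it is, being $1$-Lipschitz and, on the compact $K$, automatically bounded) and that the constant $\beta$ produced is genuinely independent of $(z,y)$, which the computation above guarantees.
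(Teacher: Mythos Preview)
Your proof is correct and follows essentially the same approach as the paper: both use the Kantorovich--Rubinstein dual formulation of $d_W$ with a Lipschitz test function vanishing at $z$ to bound $\mathbb{P}(X_\infty(y)\notin B(z,h))$ by a constant times $d_W(\delta_z,H(y,\cdot))$. The only cosmetic difference is that the paper employs a smooth bump function $\zeta_z$ (with some Lipschitz constant $L$, then argues separately that $L$ may be taken independent of $z$ via translation), whereas you plug in $\varphi_z(w)=-\|w-z\|_2$ directly, which is already $1$-Lipschitz and yields the explicit constant $\beta=\alpha h$ with uniformity in $(z,y)$ for free; your observation that the continuity hypothesis on $x\mapsto H(x,\cdot)$ is not actually used in this lemma is also correct.
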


\begin{proof} ~\\
We consider a smooth function $\zeta_z$ taking values in $[0,1]$ such that  $\zeta_z (z)=0$ and the restriction of $\zeta_z$ to the complement of $B(z,h)$ is $1$. Being smooth, it is $L$-Lipschitz, therefore $\frac{1}{L} \zeta_z$ is $1$-Lipschitz. Eventually since $\zeta_z(z)=0$: \begin{align*}
    \mathbb{P}(X_{\infty}(y)\notin B(z,h))&\le \mathbb{E}\left(\zeta(X_{\infty}(y))\right) \\
    &\le L \left(\mathbb{E}\left(\frac{1}{L}\zeta_z(X_{\infty}(y))\right)-\mathbb{E}\left(\frac{1}{L}\zeta_z(z)\right)\right) \\ 
    &\le L d_W (H(y,\cdot),\delta_z).
\end{align*}
We therefore only have to chose $\beta\le \frac{h}{L}$ to conclude. The constant $L$ is independent of $z$, since we could perform the same computation for $z'\in K$ considering $\zeta_{z'}=\zeta_z(\cdot-(z'-z))$ that is also $L$-Lipschitz.
\end{proof}
We may now prove the uniform ergodic theorem:
\begin{proof}(Theorem \ref{ergodique})~\\
We will first prove the simple convergence of our process.\\

Since $\La^{(1)}$ is just noise, one has for all $x$ and for all $i\in \llbracket 1,n \rrbracket$ that $$X_{t,i}(x)=X_{t}(x) \cdot e_i$$ is a martingale that lives in the compact space $K$ and is therefore bounded. \\

The martingale convergence theorem implies that for any $x$, $(X_{t,i}(x))_{t\ge 0}$ converges almost surely and in $L^2$ to $[X_{\infty}(x)]_i$ as $t$ goes to infinity. Let us prove that $(X_{t,i}(x))_{t\ge 0}$ converges necessarily to one of the $z\in K_0$. From now on the $x$ will be implicit. \\

Using Dambis, Dubins-Schwarz's theorem, there exists an extension $\widetilde{\Omega}$ of our probability space $\Omega$ and a Brownian motion on this space $\beta$ such that: $$X_{t,i}={\beta_{\langle X_{i} \rangle}}_t$$ where the quadratic variation of $X_{i}$ is given by: 
\begin{align*}\langle X_{i} \rangle_t
=\sum_{j=1}^n\int_{0}^t \sigma_{i,j}\left(X_s\right)^2 \mathrm{d}s .
\end{align*}

Thus, since for almost any $\omega\in \widetilde{\Omega}$ one has that $X_{t,i}(\omega)$ converges when $t$ tends to infinity, we have that $\langle X_{i} \rangle_t(\omega)$ converges. \\

But $\langle X_{i} \rangle_t$ is a sum of $n$ integrals from $0$ to $t$ of positive functions, that is to say $n$ functions increasing with $t$ and thus for all $i,j\in\llbracket 1,n\rrbracket$ one has: $$t\mapsto\int_{0}^t \sigma_{i,j}\left(X_s(\omega)\right)^2 \mathrm{d}s$$ converges and thus $$\sigma_{i,j}\left(X_t(\omega)\right)\underset{t\to+\infty}{\longrightarrow} 0.$$

Eventually one has that: $$\sigma\left(X_{\infty}(\omega)\right)=0,$$ 
and thus $X_{\infty}\left(\omega\right)\in K_0$.\\

Therefore we get that for all $x\in K$: \begin{align*}
    \mathbb{E}_x\left(f(X_{\infty})\right)&=\int_{K} f(z)H(x,\mathrm{d}z)            \\&=\sum_{z\in K_0} \mathbb{P}_x(X_{\infty}=z)f(z) \\
    &=\mathcal{P}f(x).
\end{align*}
Finally since $f$ is $L$-Lipschitz for some $L\ge 0$:
 \begin{align*}
    \left|\mathbb{E}_x\left(f(X_t)\right)-\mathcal{P}f(x)\right|&\le L\mathbb{E}\left(\left\|X_t(x)-X_{\infty}(x)\right\|_2\right) \\
    &\le L\mathbb{E}\left(\left\|X_t(x)-X_{\infty}(x)\right\|_2^2\right)^{\frac{1}{2}} \\
    &\underset{t\to+\infty}{\longrightarrow}0,
\end{align*}
by using the theorem of convergence of martingales in $L^2$.\\

We thus proved the pointwise convergence but not the uniform one.\\

We notice that for any $t>0$, $x,x'\in K$, one has: \begin{align*}
    &\left|e^{t\La^{(1)}}f(x')-\Pa f(x') \right|\\&\le L\mathbb{E}\left(\|X_t(x)-X_t(x')\|_2\right)+L\mathbb{E}\left(\|X_t(x)-X_{\infty}(x)\|_2\right)\\& \ \ \ \ \ \ \ \ +L \mathbb{E}\left(\|X_{\infty}(x)-X_{\infty}(x')\|_2\right) \\
    &\le L \mathbb{E}\left(\|X_t(x)-X_t(x')\|_2^2\right)^{\frac{1}{2}}+L \mathbb{E}\left(\|X_t(x)-X_{\infty}(x)\|_2^2\right)^{\frac{1}{2}}\\& \ \ \ \ \ \ \ \ +L \mathbb{E}\left(\|X_{\infty}(x)-X_{\infty}(x')\|_2^2\right)^{\frac{1}{2}} \\
    &\le L \mathbb{E}\left(\|X_t(x)-X_{\infty}(x)\|_2^2\right)^{\frac{1}{2}}+2L \mathbb{E}\left(\|X_{\infty}(x)-X_{\infty}(x')\|^2_2\right)^{\frac{1}{2}}
\end{align*}
where we used in the last inequality that $\|X_t(x)-X_{t}(x')\|_2^2$ is a submartingale, and thus its expectation is increasing with $t$.\\

Let $\varepsilon>0$, using the last inequality, we have that if there is $\eta_x>0$ such that for all $x'\in B(x,\eta_x)$ one has: \begin{equation}\label{eqn1}\mathbb{E}\left(\|X_{\infty}(x')-X_{\infty}(x)\|_2^2\right)\le \varepsilon^2,\end{equation}
then for $t>0$ such that $$\mathbb{E}\left(\|X_t(x)-X_{\infty}(x)\|_2^2\right)^{\frac{1}{2}}\le \varepsilon,$$ we will have for all $x'\in B(x,\eta_x)$: $$ \left|e^{t\La^{(1)}}f(x')-\Pa f(x') \right|\le 3L\varepsilon.$$

Since $K$ is a compact set, it can be covered by a finite number of balls of the form $B(x,\eta_x)$, yet we have for all $x$ that: $$\mathbb{E}\left(\|X_t(x)-X_{\infty}(x)\|_2^2\right)\underset{t\to+\infty}{\longrightarrow}0,$$ so eventually there exists $M>0$ such that for all $t>M$ and for all $x'\in K$ we have: $$ \left|e^{t\La^{(1)}}f(x')-\Pa f(x') \right|\le 3L\varepsilon,$$
and the theorem is proved.\\

We will now prove $\eqref{eqn1}$. Let $\alpha,h$ be in  $]0,1[$, Lemma \ref{continuite} gives us that there exists $\beta>0$, such that \begin{equation*}d_W(\delta_z,H(y,\cdot))\le \beta \ \ \ \ \text{implies } \ \ \  \mathbb{P}(X_{\infty}(y)\in B(z,h))\ge 1-\alpha.\end{equation*}  The function $x \in K \mapsto H(x,\cdot) \in {\mathcal P} (K) $ is continuous  and since $K$ is compact, Heine's theorem gives that it is uniformly continuous. Therefore there exists $a>0$ such that $|x-y|\le a$ implies $d_W (H(x,\cdot),H(y,\cdot))\le \beta$. Since for $z\in K_0$ it holds that $H(z,\cdot)=\delta_z$, we get that for all $z\in K_0$: \begin{equation}
    |z-y|\le a \ \ \ \ \ \text{implies} \ \ \ \ \ \ \mathbb{P}(X_{\infty}(y)\in B(z,h))\ge 1-\alpha \label{approx}
\end{equation}

We have that $(X_t(x))_t$ converges almost surely to $X_{\infty}(x)$, therefore there exists $M>0$ such that $\mathbb{P}(X_M(x)\in B(X_{\infty}(x),\frac{a}{2}))\ge 1-\alpha$. We now use the dependence on initial conditions: there exists $\eta_x>0$ such that for all $x'\in B(x,\eta_x)$, we have: $$\mathbb{P}\left(|X_M(x')-X_M(x)|\ge\frac{a}{2}\right)\le\alpha.$$
Therefore: \begin{equation}\mathbb{P}\left(X_M(x')\in B(X_{\infty}(x),a)\right)\ge 1-2\alpha. \label{approx3}\end{equation}

By homogeneity of our process, we have: \begin{align*}&\mathbb{P}(X_{\infty}(x')\in B(X_{\infty}(x),h))\\&=\mathbb{P}(X_{\infty}(X_M(x'))\in B(X_{\infty}(x),h)) \\
&\ge\mathbb{P}(X_{\infty}(X_M(x'))\in B(X_{\infty}(x),h)|X_M(x')\in B(X_{\infty}(x),a))\\& \ \ \ \ \ \ \ \ \times\mathbb{P}(X_M(x')\in B(X_{\infty}(x),a)) \\
&\ge (1-\alpha)(1-2\alpha)  \ \ \ \ \ \text{by \eqref{approx} and \eqref{approx3}} \\
&\ge 1-3\alpha-2\alpha^2 \\
&\ge 1-5\alpha.
\end{align*} \\
Finally, if $R=\underset{z\in K}{\sup} \|z\|_2^2$, we have for all $x'\in B(x,\eta_x)$:\begin{align*}
    &\mathbb{E}(\|X_{\infty}(x)-X_{\infty}(x')\|_2^2)\\&\le 2R\  \mathbb{P}(X_{\infty}(x')\notin B(X_{\infty}(x),h))+h^2\  \mathbb{P}\left(X_{\infty}(x')\in B(X_{\infty}(x),h)\right)  \\
    &\le 2R \times 5\alpha+h^2 .
\end{align*}
We then choose $h$ and $\alpha$ small enough so that the last term is inferior to $\varepsilon^2$, which proves \eqref{eqn1} and thus our theorem.

\end{proof}

\section*{Acknowledgements}
I would like to give a huge thank to my two master thesis supervisors, Cédric Bernardin and Raphaël Chetrite, who introduced me to this problem. They were a very listening ear and supported me throughout the redaction of this article. A special thanks to C\'edric for his many proofreadings  and corrections of preliminary versions of this paper and to Raphaël for the abundant bibliography he gave me.


\begin{thebibliography}{10}

\bibitem{BBCCNP} Benoist, T., Bernardin, C., Chetrite, R., Chhaibi, R., Najnudel, J. and Pellegrini, C.. Emergence of Jumps in Quantum Trajectories via Homogenization. Commun. Math. Phys, 2021. 
\bibitem{MZ84}Meyer,P.A. and Zheng, W.A.. Tightness criteria for laws of semimartingales. Ann. I. H. Poincaré
- Pr., 20(4):353–372, 1984.
\bibitem{SP08}Pavliotis, G. and Stuart, A.. Multiscale methods: averaging and homogenization. Springer Science \& Business Media, 2008.
\bibitem{Papa}G.C. Papanicolaou. Asymptotic analysis of stochastic equations. Murray Rosenblatt. Math Associ America, 1978.
\bibitem{BLP} A. Bensoussan, J.L. Lions, and G. Papanicolaou. Asymptotic Analysis for Periodic Structure. Noth- Holland
Publishing, 1978.
\bibitem{Gar}C. W. Gardiner. Handbook of Stochastic Methods for Physics, Chemistry and the Natural Sciences, volume 13 of
Springer Series in Synergetics. Springer, New York, 2nd edition, 1985.
\bibitem{Kif}Y. Kifer. Random perturbations of dynamical systems, volume 16 of Progress in Probability and Statistics.
Birkhäuser, Boston, 1988.
\bibitem{Skoro}A. V. Skorokhod. Asymptotic methods in the theory of stochastic differential equations, volume 78. Translations of Mathematical Monographs. American Mathematical Society, Providence, RI, 1989.
\bibitem{Olla}S Olla. Homogenization of diffusion processes in random fields. Lecture Notes, 1994.
\bibitem{Ris}H. Risken. The Fokker-Planck Equation: Methods of Solution and Applications. Springer, Berlin, 3rd edition,
1996.
\bibitem{YZ}G. G. Yin and Q. Zhang. Discrete-time Markov chains, volume 55. Applications of Mathematics (New York).
Springer-Verlag, New York., 2005.
\bibitem{YZ2} G. G. Yin and Q. Zhang. Continuous-time Markov chains and applications, volume 37. Applications of Mathematics (New York). Springer-Verlag, New York, 1998.
\bibitem{FPS} J-P. Fouque, G. C. Papanicolaou, and R. K. Sircar. Derivatives in financial markets with stochastic volatility.
Cambridge University Press, Cambridge, 2000.
\bibitem{EK} N. Ethier and T. G. Kurtz. Markov Processes : Characterizations and Convergence. Wiley Interscience, 2005.
\bibitem{BG} N. Berglund and B. Gentz. Noise-induced phenomena in slow-fast dynamical systems. A samplepaths approach.
Probability and Its Applications (New York). Springer-Verlag, London Ltd., London, 2006.
\bibitem{KL} T. Komorowski, C. Landim, and S. Olla. Fluctuations in Markov processes : time symmetry and martingale
approximation. Heidelberr Springer, 2012.
\bibitem{FW} M. I. Freidlin and A. D. Wentzell. Random Perturbations of Dynamical Systems, volume 260 of Grundlehren der
Mathematischen Wissenschaften. Springer, New York, 2012.
\bibitem{Kam} N. G. van Kampen. Elimination of fast variables. Phys. Rep, 124, 1985
\bibitem{Sch}Z. Schuss. Singular perturbation methods in stochastic differential equations of mathematical physics. SIAM Rev,
22:119–155, 1980.
\bibitem{k1}U. M. Titulaer. A systematic solution procedure for the fokker-planck equation of a brownian particle in the
high-friction case. Phys. A, 91, 1978.
\bibitem{k2} D. Wycoff and N. L. Balazs. Multiple time scales analysis for the kramers- chandrasekhar equation. Phys. A, 146,
1987.
\bibitem{k3} S. Hottovy, G. Volpe, and J. Wehr. Noise-induced drift in stochastic differential equations with arbitrary friction
and diffusion in the smoluchowski–kramers limit. J. Stat. Phys, 146, 2012.
\bibitem{k4} S. Hottovy, A. McDaniel, G. Volpe, andWehr J. The smoluchowski–kramers limit of stochastic differential equations
with arbitrary state-dependent friction. Commun. Math. Phys, 336, 2015.
\bibitem{k5} J. Birrell and J. Wehr. Langevin equations in the small-mass limit: Higher-order approximations. Ann. Henri
Poincaré, 21, 2020.
\bibitem{k6} G. Volpe and J. Wehr. Effective drifts in dynamical systems with multiplicative noise: a review of recent progress.
Reports on Progress in Physics, 2016.
\bibitem{k7} H. J. Sussmann. On the gap between deterministic and stochastic ordinary differential equations. Ann. Probability,
6, 1978.
\bibitem{k8} R.L. Stratonovich. Vestn.Mosk. Gos. Univ. Ser. Fiz. Astron., 1962.
\bibitem{k9} R. L. Stratonovich. Topics in the theory of random noise. Vol. II. Revised English edition. Translated from the
Russian by Richard A. Silverman. Gordon and Breach Science Publishers, New York, 1967.
\bibitem{k10} R. Z. Khasminskii. The averaging principle for parabolic and elliptic differential equations and markov processes
with small diffusion. Teor. Verojatnost. i Primenen., 8, 1963.
\bibitem{k11} Khasminskii. On an averaging principle for ito stochastic differential equation. Kybernitca, 1968.
\bibitem{k12} G.C. Papanicolaou, D. Stroock, and S.R.S. Varadhan. Martingale approach to some limit theorems. Proceedings
of the 1976 Duke University Conference on Turbulence, Durham, NC, 1976.
\bibitem{k13} G. C. Papanicolaou. Some probabilistic problems and methods in singular perturbations. Rocky Mountain J.
Math, 6, 1976.
\bibitem{k14} R. Kupferman D. Givon and A. M. Stuart. Extracting macroscopic dynamics: model problems and algorithms.
Nonlinearity, 17, 2004.
\bibitem{k15} W. Huisinga, Ch. Schutte, and A. M. Stuart. Extracting macroscopic stochastic dynamics: model problems.
Comm. Pure Appl. Math., 56, 2003.
\bibitem{k16} W. Just, K. Gelfert, N. Baba, A. Riegert, and H. Kantz. Elimination of fast chaotic degrees of freedom: on the
accuracy of the born approximation. J. Statist. Phys, 112, 2003.
\bibitem{k17} W. Just, H. Kantz, Ch. Rodenbeck, and M. Helm. Stochastic modelling: replacing fast degrees of freedom by
noise. J. Phys. A, 34, 2001.
\bibitem{k18} E. Pardoux and A. Yu. Veretennikov. On the poisson equation and diffusion approximation. i. Ann. Probab, 29,
2001.
\bibitem{k19} E. Pardoux and A. Yu. Veretennikov. On poisson equation and diffusion approximation. ii. Ann. Probab, 31,
2003.
\bibitem{k20} E. Pardoux and A. Yu. Veretennikov. On the poisson equation and diffusion approximation. iii. Ann. Probab.,
33, 2005.
\bibitem{k21} A. J. Majda, I. Timofeyev, and E. Vanden Eijnden. A mathematical framework for stochastic climate models.
Comm. Pure Appl. Math, 54, 2001.
\bibitem{k22} R. A. Carmona and L. Xu. Homogenization theory for time-dependent two-dimensional incompressible gaussian
flows. The Annals of Applied Probability, 7, 1997.
\bibitem{k23} R. Z. Khasminskii and G. Yin. On averaging principles: An asymptotic expansion approach. SIAM J. Math. Anal,
35, 2004.
\bibitem{k24} R. Z. Khasminskii and G. Yin. On transition densities of singularly perturbed diffusions with fast and slow
components. Siam J App Math, 56, 1996.
\bibitem{k25} R.Z. Khasminskii and G. Yin. Limit behavior of two-time-scale diffusions revisited. J. Differential Equations, 212,
2005.
\bibitem{a1}V. I. Arnold. Mathematical Methods of Classical Mechanics. Springer-Verlag, 1974.
\bibitem{a2} J. A. Sanders and F. Verhulst. Averaging methods in nonlinear dynamical systems, volume 59. Applied Mathematical
Sciences. Springer-Verlag, New York, 1985.
\bibitem{a3} P. Lochak and C. Meunier. Multiphase averaging for classical systems, volume 72. Applied Mathematical Sciences.
Springer-Verlag, New York, 1988.
\bibitem{a4} R. E. O’Malley. Singular perturbation methods for ordinary differential equations, volume 89. Applied Mathematical
Sciences. Springer-Verlag, New York., 1991.
\bibitem{q1}F. Petrucionne H.P. Breuer. The Theory of Open Quantum Systems. Oxford University Press, 2002.
\bibitem{q2} J.-M. Raimond S. Haroche. Exploring the Quantum: Atoms, CaviQes, and Photons. Oxford Graduate Texts, 2006.
\bibitem{q3} M Gregoratti A Barchielli. Quantum trajectories and measurements in continuous time: the diffusive case. Springer
Verlag, 2009.
\bibitem{q4} G. J. Milburn H.M. Wiseman. Quantum Measurement and Control. Cambridge University Press, 2009.
\bibitem{BBT} M. Bauer, D. Bernard, and A. Tilloy. Computing the rates of
measurement-induced quantum jumps. J. Phys. A, 48, 2015.
\bibitem{der} P. Billingsley. Convergence of probability measures. John Wiley \& Sons, 2013
\end{thebibliography}
\end{document}